\newcommand {\C} {{\mathbb C}}
\newcommand {\Z} {{\mathbb Z}}
\newcommand {\Q} {{\mathbb Q}}
\newcommand {\PP} {{\mathbb P}}
\newcommand {\dt} {\bullet}
\newcommand {\F} {{\mathcal F}}
\newcommand {\cH} {{\mathcal H}}
\newcommand {\A} {{\mathbb A}}
\newcommand {\OO} {{\mathcal O}}
\newcommand {\D} {\mathbb{D}}
\newcommand{\bH}{\textbf{H}}
\newcommand {\cO} {{\mathcal O}}
\newcommand{\bR}{\textbf{R}}
\newcommand {\cE} {{\mathcal E}}
\newcommand {\cF} {{\mathcal F}}
\newcommand{\cD}{\mathcal{D}}
\newcommand {\cL} {{\mathcal L}}
\newcommand {\cM} {{\mathcal M}}
\newcommand{\cN}{\mathcal{N}}
\newcommand{\bT}{\begin{tikzcd}}
\newcommand{\eT}{\end{tikzcd}}
\newcommand{\GR}{Gr^{F}_{p}DR(\cN)}
\newcommand{\tX}{\tilde{X}}
\newcommand{\tY}{\tilde{Y}}
\newcommand{\tM}{\tilde{M}}
\newcommand{\tcM}{\tilde{\cM}}
\newcommand{\tcN}{\tilde{\cN}}
\newcommand{\tK}{\tilde{K}}
\DeclareMathOperator{\im}{im}
\DeclareMathOperator{\Spec}{Spec}
\DeclareMathOperator{\Proj}{Proj}
\DeclareMathOperator{\depth}{depth}
\newtheorem{thm}[subsection]{Theorem}
\newtheorem{cor}[subsection]{Corollary}
\newtheorem{lemma}[subsection]{Lemma}
\newtheorem{prop}[subsection]{Proposition}
\newtheorem{defn}[subsection]{Definition}
\newtheorem{rmk}[subsection]{Remark}
\newtheorem{ex}[subsection]{Example}
\newtheorem{nota}[subsection]{Notation}
\newtheorem{set}[subsection]{Setting}
\begin{document}

\author{Donu Arapura and
Scott Hiatt }
\date{\today}
  \address{
 Department of Mathematics\\
  Purdue University\\
  West Lafayette, IN 47907\\
  U.S.A.}
  \address{
 Department of Mathematics\\
  DePauw University\\
  Greencastle, IN 46135\\
  U.S.A.}
  \thanks{First author supported by a grant from the Simon's foundation}

\title{Equivariant Hodge modules and rational singularities}

\subjclass[2020]{14B05, 14F10, 14L30}

\maketitle

\begin{abstract}
 We define a notion of Hodge modules with rational singularities. A variety has rational singularities in the usual sense, if
 it is normal and the Hodge module related to intersection cohomology has rational singularities in the present sense.
 Our main result is a generalization of Boutot's theorem that if a reductive group acts on a smooth affine variety with a stable point,
 and $\cM$ is an equivariant Hodge module with rational singularities, then the induced module on the GIT quotient also
 has rational singularities.
\end{abstract}

The goal of this paper is to develop a theory of  rational singularities with coefficients,
and to give an extension of Boutot's theorem  \cite{boutot} to this setting.
Before we  state the results precisely, we recall that a Hodge module  $\cM$ on a smooth variety $X$
consist of a $\cD$-module $M$ with a good filtration $F_\dt$  and a compatible perverse sheaf of $\Q$-vector spaces satisfying appropriate
conditions \cite{saito1}.
A simple example is $\Q_X^H [ \dim X]$, which consists of the right $\cD$-module $\Omega^{\dim X}_{X}$ with trivial filtration, and
perverse sheaf $\Q_{X}[\dim X]$. Hodge modules are also defined when $X$ is singular. If $X_{\text{sm}}\subseteq X$  is the smooth locus,
the module $\Q_{X_{\text{sm}}}^H [\dim X]$ has a unique
extension $IC_X^H$  to a Hodge module on  $X$, such that the underlying perverse sheaf is the intersection cohomology complex.
Associated to a Hodge module $\cM$ is the 
$\OO_X$-module $S_{X}(\cM)=F_p(M) \neq 0$, where $p$ is chosen minimal. We also have a complex of $\OO_X$-modules $Q_{X}(\cM)$, which is roughly dual to $S_{X}(\cM)$; more precisely,  $Q_{X}(\cM)= \bR \cH om_{X}( S_{X}( \cM), \omega^{\bullet}_{X})$. Suppose that 
$\cM$ is a Hodge module strictly supported on $X$, which means  that  it supported on $X$, and 
it  has no nontrivial  subquotient  supported on a proper subvariety of $X$.
We define $\cM$ to have  {\em rational singularities} if $Q_{X}(\cM)$ is quasi-isomorphic to a sheaf placed in degree $-\dim X$, or
equivalently if $S_{X}(\cM)$ is a maximal Cohen-Macaulay module. 
We can see that $\Q_X^H [\dim X]$ has rational singularities when $X$ is smooth, since $Q_{X}(\Q_X^H[\dim X])= \OO_X[\dim X]$.
More generally, $X$ has rational singularities in the usual sense  if and only if $X$ is normal and
$IC_X^H$ has rational singularities in the present sense (see Corollary \ref{cor:rs}).  This gives a characterization of rational singularities which does not depend on
a resolution of singularities. 

Given a regular $\C$-algebra $R$ of finite type with an action of a reductive group $G$, Hochster and Roberts \cite{hr} proved that
the ring of invariants $R^G$ is Cohen-Macaulay.  The theorem was refined by Boutot \cite{boutot} who proved that $\Spec R^G$
has rational singularities. It seems natural to consider analogues of these results for modules.
Suppose that $G$ is a reductive group that acts effectively on a smooth affine variety $X=\Spec R$ such that $X$ has a stable point,
then we show that there is an equivalence of categories between the category of equivariant Hodge modules   strictly supported on $X$,
and Hodge modules strictly supported on $Y= \Spec R^G$. 
\begin{prop} Suppose that $G$ is a reductive group that acts effectively on $X$ such that there exists a stable point. Let $Y = X//G$.
 Then there is an equivalence of categories between $HM_{Y}(Y,w)$ and $ HM_{X,G}(X, w+d)$ .
\end{prop}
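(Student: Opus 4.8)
The plan is to reduce to the locus where $G$ acts freely and then invoke descent along the resulting $G$-torsor. The first reduction is formal: a Hodge module with strict support on a variety is the intermediate extension of its restriction to any dense Zariski-open subset (Saito), and the intermediate extension of a $G$-equivariant module is again $G$-equivariant by the uniqueness of intermediate extensions. Since $X$ has a stable point, the (properly) stable locus $X^{s}\subseteq X$ is a nonempty, hence dense, $G$-invariant open subset; by effectiveness the generic stabiliser is trivial, and shrinking $X^{s}$ to the (still dense, still saturated) $G$-invariant open where the action is free, the quotient map $\pi\colon X^{s}\to Y^{s}:=X^{s}/G$ becomes a principal $G$-bundle, while by standard GIT $Y^{s}$ is open and dense in $Y$. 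Restriction then gives equivalences $HM_{Y}(Y,w)\simeq HM_{Y^{s}}(Y^{s},w)$ and $HM_{X,G}(X,w+d)\simeq HM_{X^{s},G}(X^{s},w+d)$, so it suffices to construct an equivalence $HM_{Y^{s}}(Y^{s},w)\simeq HM_{X^{s},G}(X^{s},w+d)$.

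The equivalence on the stable locus is realized by $\pi^{*}[d]$, where $d=\dim G=\dim X-\dim Y$ is the relative dimension of $\pi$. Since $\pi$ is a $G$-torsor, $[X^{s}/G]\cong Y^{s}$ as stacks, so that a $G$-equivariant Hodge module on $X^{s}$ is literally a Hodge module on $Y^{s}$; concretely, $\pi^{*}[d]$ is the Bernstein--Lunts equivalence of equivariant derived categories $D^{b}_{c}(Y^{s})\xrightarrow{\ \sim\ }D^{b}_{G,c}(X^{s})$ restricted to Hodge modules. One checks that $\pi^{*}[d]$ is $t$-exact for the perverse $t$-structures, that it carries the underlying filtered $\cD$-module to its shifted naive pullback and hence a Hodge module of weight $w$ to one of weight $w+d$, and that it preserves strict support because $\pi$ is smooth with $X^{s}$ smooth and irreducible (the pullback of an $IC$-sheaf is again an $IC$-sheaf). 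The torsor structure equips $\pi^{*}[d]\cN$ with a canonical $G$-equivariant structure, giving a functor $HM_{Y^{s}}(Y^{s},w)\to HM_{X^{s},G}(X^{s},w+d)$, and a quasi-inverse is furnished by faithfully flat descent: a $G$-equivariant structure on $\cM$ is precisely descent data for $\cM$ along $\pi$. Unwinding the two reductions, the equivalence in the statement sends $\cN\in HM_{Y}(Y,w)$ to the intermediate extension to $X$ of $(\pi^{*}(\cN|_{Y^{s}}))[d]$, and sends $\cM\in HM_{X,G}(X,w+d)$ to the intermediate extension to $Y$ of the descent of $\cM|_{X^{s}}$.

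The points to verify are thus: (i) that the stable locus, and then the free locus, is dense and saturated, so that $Y^{s}$ is genuinely open and dense in $Y$ — here the stable-point and effectiveness hypotheses, together with basic GIT, enter; (ii) that $\pi^{*}[d]$ has the stated exactness, weight, and strict-support properties and commutes with intermediate extension — this is Saito's pullback formalism for smooth morphisms; and (iii) faithfully flat, indeed étale-locally trivial, descent for the abelian category of (polarizable pure) Hodge modules along $\pi$, with the identification of descent data with $G$-equivariant structures. I expect step (iii) to be the main obstacle, since faithfully flat descent is not part of the black-box formalism of Hodge modules and must be installed by hand — for instance through the bar simplicial scheme $G^{\dt}\times X^{s}$ and $t$-exactness of the face pullbacks, or by reducing to an étale cover trivializing the torsor, where descent is available. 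Step (i) is the other delicate point: if the generic stabiliser were nontrivial one would land on a Deligne--Mumford stack over $Y^{s}$ rather than on $Y^{s}$ itself and the two categories would no longer agree, so it is essential that an element acting trivially on a dense open acts trivially on all of $X$.
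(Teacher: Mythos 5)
Your overall architecture --- restrict to the stable locus using strict support (this is exactly Corollary \ref{cor:Ggen} in the paper), then descend along the quotient map, then extend back by $j_{!*}$ --- is the right skeleton, and the free-action descent you invoke is precisely Proposition \ref{prop:freeHM}. But there is a genuine gap at the step you yourself flag as delicate, and your proposed resolution of it is incorrect. You claim that effectiveness forces the generic stabiliser to be trivial, so that after shrinking $X^{s}$ the map $\pi$ becomes a principal $G$-bundle. This does not follow. Effectiveness says that the \emph{kernel} of $G\to \mathrm{Aut}(X)$, i.e.\ the intersection of all stabilisers, is trivial; the generic stabiliser is only a conjugacy class of (finite, on $X^s$) subgroups, and it can be nontrivial even when no single nontrivial element fixes a dense open set. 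A concrete affine example: $G=\mathrm{SO}_3(\C)$ acting by conjugation on the $5$-dimensional space of traceless symmetric matrices. The action is effective and generic orbits (distinct eigenvalues) are closed with finite stabiliser, so stable points exist; yet every such point has stabiliser isomorphic to $(\Z/2)^2$ (the diagonal special orthogonal matrices in its eigenbasis). There is no dense open on which the action is free, so the torsor/faithfully-flat-descent argument that carries the rest of your proof never gets off the ground. Your closing remark --- ``an element acting trivially on a dense open acts trivially on all of $X$'' --- is true but does not address this, since a nontrivial generic stabiliser does not require any single element to act trivially on a dense open.

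The paper closes exactly this gap by a different mechanism. Luna's slice theorem (Theorem \ref{thm:luna}) provides an \'etale-local model $(G\times S)/H\to X^{s}$, $S/H\to Y^{s}$ with $H$ finite, and the key input is that a surjective \emph{finite} map of irreducible varieties induces an equivalence on categories of Hodge modules with strict support and fixed weight (Lemmas \ref{lemma1} and \ref{lemma2}, via maximal summands of $\cH^0 f^{*}$ and $\cH^{0}f_{+}$ with strict support). This absorbs the finite stabilisers: one passes from $X^{s}$ to $(G\times S)/H$ to $G\times S$ (where $G$ genuinely acts freely, by left translation) and from $Y^{s}$ to $S/H$ to $S$, and only then applies the free-action equivalence. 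Note also that your worry that a nontrivial generic stabiliser would make ``the two categories no longer agree'' (the Deligne--Mumford stack issue) is in fact unfounded for the same reason: strict support together with the finite-map equivalence identifies $HM_{S,?}(S,w)$ with $HM_{S/H}(S/H,w)$, so the orbifold discrepancy disappears at the level of these categories. If you want to salvage your write-up, replace the ``shrink to the free locus'' step by the Luna slice reduction plus a proof of the finite surjective equivalence; as written, the argument fails on examples such as the one above.
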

Suppose that $\cM$ is an equivariant Hodge module  strictly support on $X$,
and let $\pi^{G}_{+}\cM$ be the corresponding Hodge module on $Y$ under this equivalence.
Our generalization of Boutot's theorem is as follows:

\begin{thm}
     Let $G$ be a $d$-dimensional reductive algebraic group  with an effective action on a smooth affine variety $X$ such that there exists a stable point.  Let $\pi:X\to Y= X//G$ be the quotient map. 
     Suppose that  $\cM \in HM_{X,G}(X,w+d)$ has rational singularities, then $\cN = \pi^{G}_{+}\cM \in HM_{Y}(Y,w)$ has rational singularities, and  
     $$Q_{Y}(\cN)[d] \cong (\pi_{*}Q_{X}(\cM))^{G}$$
     $$S_{Y}(\cN) \cong (\pi_{*}S_{X}(\cM))^{G}.$$
\end{thm}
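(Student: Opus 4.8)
The plan is to treat the two displayed formulas separately: the identity for $S_Y(\cN)$ should be essentially formal, coming from the compatibility of the Hodge filtration with the functor $\pi^G_+$, while the identity for $Q_Y(\cN)$ --- and with it the rational singularities of $\cN$ --- I would extract from a Grothendieck--Serre duality statement for the invariant direct image $\F\mapsto(\pi_\ast\F)^G$.

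For the first identity, observe that since $\pi$ is affine and $G$ is reductive, the functor $\F\mapsto(\pi_\ast\F)^G$ on $G$-equivariant quasi-coherent sheaves is exact and preserves coherence, so the underlying filtered $\cD$-module of $\cN=\pi^G_+\cM$ is simply the $G$-invariant part of the equivariant $\cD$-module pushforward of $\cM$ along $\pi$, with no contribution in other cohomological degrees. Inserting this into Saito's description of the de Rham complex of a pushforward gives $\mathrm{Gr}^F\DR_Y(\cN)\simeq(\pi_\ast\,\mathrm{Gr}^F\DR_X(\cM))^G$, up to the shift by the relative dimension $d$; taking the lowest nonzero step of the Hodge filtration on both sides --- which is $S_X(\cM)$, resp.\ $S_Y(\cN)$, placed in the extreme term of the de Rham complex --- yields $S_Y(\cN)\cong(\pi_\ast S_X(\cM))^G$. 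The care needed here is with shift conventions and with the fact that $Y$ is singular, so that $\DR_Y$ is taken through a local closed embedding of $Y$ in a smooth variety; the reason the relative-dimension shift attaches to $Q$ and not to $S$ is the triviality of the relative dualizing sheaf that enters next.

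The substantive point is the following: for every $G$-equivariant coherent sheaf $\F$ on $X$,
$$\bR\cH om_Y\big((\pi_\ast\F)^G,\ \omega_Y^\bullet\big)\ \simeq\ \big(\pi_\ast\,\bR\cH om_X(\F,\omega_X^\bullet)\big)^G[-d].$$
Even though $\pi$ is not proper, $(\pi_\ast(-))^G$ is the pushforward along the good moduli space morphism $p\colon[X/G]\to Y$ of the quotient stack, and this morphism behaves like a proper one for coherent duality: since $G$ is reductive, the invariants functor $(-)^G$ on representations is exact and has a two-sided adjoint (the trivial-representation functor), which supplies the adjunctions that Grothendieck--Serre duality requires. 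To identify the relative dualizing complex I would factor $\pi$ through the $G$-torsor $X\to[X/G]$, which is smooth of relative dimension $d$ with relative dualizing sheaf $\OO_X\otimes\wedge^d\mathfrak{g}^{*}$; reductive groups are unimodular, so $\wedge^d\mathfrak{g}^{*}$ is the trivial representation and this sheaf is $G$-equivariantly trivial, whence $p^!\omega_Y^\bullet$ pulls back to $\omega_X^\bullet[-d]$ and the shift above appears. Putting $\F=S_X(\cM)$, unwinding the definition of $Q$ on both sides and using the formula for $S_Y(\cN)$ already obtained, the displayed isomorphism becomes
$$Q_Y(\cN)[d]\ =\ \bR\cH om_Y\big(S_Y(\cN),\omega_Y^\bullet\big)[d]\ \simeq\ \big(\pi_\ast\,\bR\cH om_X(S_X(\cM),\omega_X^\bullet)\big)^G\ =\ (\pi_\ast Q_X(\cM))^G.$$

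Rational singularities of $\cN$ then follow: since $\cM$ has rational singularities, $S_X(\cM)$ is a maximal Cohen--Macaulay sheaf on the smooth variety $X$, hence locally free, so $Q_X(\cM)$ is a single coherent sheaf in degree $-\dim X$; exactness of $(\pi_\ast(-))^G$ makes $(\pi_\ast Q_X(\cM))^G$ a single sheaf in degree $-\dim X$, and therefore $Q_Y(\cN)$ is a single sheaf in degree $-\dim X+d=-\dim Y$, which is exactly the definition of rational singularities for $\cN$. (Equivalently, the duality isomorphism shows the dualizing complex of $S_Y(\cN)=(\pi_\ast S_X(\cM))^G$ is concentrated in one degree, i.e.\ $S_Y(\cN)$ is maximal Cohen--Macaulay on $Y$, which is Cohen--Macaulay by Boutot's theorem \cite{boutot}.) I expect the hard part to be the duality statement of the previous paragraph: making rigorous that the non-proper morphism $\pi$ satisfies Grothendieck--Serre duality via its factorization through the quotient stack, and computing the relative dualizing complex exactly, is where reductivity of $G$, unimodularity and the smoothness of $X$ all get used.
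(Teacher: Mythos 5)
There are two genuine gaps, and together they amount to assuming the hardest parts of the theorem. First, the identity $S_{Y}(\cN)\cong(\pi_{*}S_{X}(\cM))^{G}$ is not ``essentially formal.'' The module $\cN=\pi^{G}_{+}\cM$ is defined through an equivalence of categories (built from Luna slices), and its Hodge filtration is \emph{not} given by definition as the invariant pushforward of $F_\dt M$; even using the description of $\cN$ as the strict-support summand of $Gr^W_w\cH^{-d}(\pi_+\cM)$, the morphism $\pi$ is affine but not proper, so Saito's direct image theorem (which is what identifies the lowest Hodge piece of a pushforward with $R^0\pi_*S_X(\cM)$, and which requires strictness of the filtered direct image) does not apply. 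Establishing exactly this compatibility is why the paper's proof goes through Luna's \'etale slice theorem (to reduce to the free case $G\times S\to S$, where the formulas for $S$ and $Q$ under smooth pullback are explicit), Kirwan's partial resolution $\tilde X\to X$ with $\tilde X^{ss}=\tilde X^s$, and then Saito's theorem for the \emph{proper} birational maps $p$ and $q$, together with local freeness of $S_X(\cM)$ and the Hochster--Roberts splitting to control the non-invariant summand. Your step (i) silently replaces all of this with an assertion.

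Second, the duality statement $\bR\cH om_Y\bigl((\pi_*\F)^G,\omega_Y^\bullet\bigr)\simeq\bigl(\pi_*\bR\cH om_X(\F,\omega_X^\bullet)\bigr)^G[-d]$ for arbitrary equivariant coherent $\F$ is asserted, not proved. This is Grothendieck--Serre duality for the (non-proper) good moduli space morphism $[X/G]\to Y$, including an identification of the relative dualizing complex; it is a substantive theorem whose proof in the generality you need is comparable in difficulty to the statement you are trying to prove, and unimodularity of $G$ alone does not deliver it. (It is also worth noting that the paper deliberately avoids quoting Boutot for the Cohen--Macaulayness of $Y$, since it reproves Boutot as a corollary; your parenthetical appeal to it is not logically circular but defeats part of the point.) If the duality statement and the $S$-formula were both supplied, your deduction of rational singularities from local freeness of $S_X(\cM)$ and exactness of $(\pi_*(-))^G$ would be fine, and the stacky-duality route would be an attractive way to bypass Kirwan's resolution --- but as written both load-bearing steps are missing.
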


We recover (a special case of) Boutot's theorem by applying our theorem to $\cM= \Q_X^H[\dim X]$.
It implies that  the corresponding module $\pi^{G}_{+}\cM = IC_Y^H$ has
rational singularities. Since $Y$ is easily seen to be normal, it must therefore have rational singularities by the result stated in the previous paragraph.

The analogue of Hochster-Roberts  for modules was considered   by Stanley \cite{stanley} and Van den Bergh \cite{bergh}.
When $A$ is a finitely generated free $R$-module on which $G$ acts compatibly,  they give some criteria when 
$A^G$ is Cohen-Macaulay, and they  give simple examples where this   can fail. As a corollary of our main theorem, we deduce a different criterion:
 
 \begin{cor}
Let $X=\Spec R$ be a smooth variety admitting  an action by a reductive group $G$ such that the stable locus $X^s\not=\emptyset$.
 Let $V$ be  a rational representation of $G$. Then $(V\otimes_\C R)^G$  is Cohen-Macaulay  provided  that $V\otimes R$ is a direct summand of $Q_{X}(\cM)[-\dim X]$ for
 some $\cM\in HM_{X,G}(X)$ with RS.
\end{cor}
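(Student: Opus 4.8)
The plan is to push the hypothesis through the equivalence furnished by the main theorem and then invoke the elementary fact that a direct summand of a maximal Cohen--Macaulay module is again maximal Cohen--Macaulay. Write $n=\dim X$. Because $\cM$ has RS, $Q_X(\cM)$ is quasi-isomorphic to a single coherent $\OO_X$-module placed in degree $-n$, and this module inherits a $G$-equivariant structure from $\cM$; set $\mathcal Q:=Q_X(\cM)[-n]$, an ordinary $G$-equivariant coherent $\OO_X$-module. I read the hypothesis as a splitting $\mathcal Q\cong(V\otimes_\C\OO_X)\oplus\mathcal W$ in the category of $G$-equivariant $\OO_X$-modules, with $\mathcal W$ coherent --- this is the only reading that bears on $(V\otimes_\C R)^G$, since the averaging operator only respects equivariant decompositions.

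Next I apply the main theorem to $\cM$: it gives that $\cN=\pi^G_+\cM$ has RS and that $Q_Y(\cN)[d]\cong(\pi_*Q_X(\cM))^G$. Since $\pi$ is affine, $\pi_*$ is exact on quasi-coherent sheaves, and since $G$ is reductive in characteristic zero, $(-)^G$ is exact; hence $(\pi_*Q_X(\cM))^G$ is, like $Q_X(\cM)$ itself, concentrated in the single degree $-n$. As a stable point forces the generic stabilizer to be finite, $\dim Y=n-d$, so this is consistent with $Q_Y(\cN)$ being concentrated in degree $-\dim Y$ (as it must, $\cN$ having RS), and shifting by $[-n]$ yields an isomorphism of ordinary coherent $\OO_Y$-modules
\[
(\pi_*\mathcal Q)^G\;=\;\bigl((\pi_*Q_X(\cM))^G\bigr)[-n]\;\cong\;Q_Y(\cN)[-\dim Y]\;=:\;\mathcal G .
\]

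I then claim $\mathcal G$ is a maximal Cohen--Macaulay $R^G$-module. Since $\cN$ has RS, $S_Y(\cN)$ is maximal Cohen--Macaulay over $R^G$; and $Y=\Spec R^G$ is Cohen--Macaulay --- either by Hochster--Roberts \cite{hr}, or by applying the main theorem to $\cM=\Q_X^H[n]$, which shows $IC_Y^H$ has RS and hence, by Corollary~\ref{cor:rs}, that $Y$ has rational singularities. Therefore $\omega_Y^{\bullet}\simeq\omega_Y[\dim Y]$, and from $Q_Y(\cN)=\bR\cH om_Y(S_Y(\cN),\omega_Y^{\bullet})$ one reads off $\mathcal G\cong\cH om_Y(S_Y(\cN),\omega_Y)$, the canonical dual of a maximal Cohen--Macaulay module over a Cohen--Macaulay ring, which is again maximal Cohen--Macaulay.

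Finally, $Y$ is affine, so passing to global sections and using the splitting of $\mathcal Q$ gives
\[
\Gamma(Y,\mathcal G)\;\cong\;(V\otimes_\C R)^G\;\oplus\;\Gamma(X,\mathcal W)^G .
\]
The module $(V\otimes_\C R)^G$ is finitely generated over $R^G$ by Hilbert's finiteness theorem, and the depth of a direct sum is the minimum of the depths and cannot exceed $\dim R^G$, so a direct summand of the maximal Cohen--Macaulay module $\Gamma(Y,\mathcal G)$ is maximal Cohen--Macaulay; hence $(V\otimes_\C R)^G$ is maximal Cohen--Macaulay over $R^G$, in particular Cohen--Macaulay. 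The substantive input is the main theorem's identity $Q_Y(\cN)[d]\cong(\pi_*Q_X(\cM))^G$; the step that needs care is matching the degree shifts and using exactness of $\pi_*$ and $(-)^G$ to pass from that isomorphism of complexes to the isomorphism $(\pi_*\mathcal Q)^G\cong\mathcal G$ of honest modules, and making sure the decomposition is taken equivariantly so that it is preserved by taking invariants.
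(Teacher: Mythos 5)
Your proof is correct and follows essentially the same route as the paper: invoke Theorem \ref{thm:main} to identify $(\pi_*Q_X(\cM))^G$ with $Q_Y(\cN)[d]=\cH om_Y(S_Y(\cN),\omega_Y)[\dim Y]$, note that $Y$ has rational singularities (hence is Cohen--Macaulay) and that the canonical dual of the maximal Cohen--Macaulay module $S_Y(\cN)$ is again maximal Cohen--Macaulay, and finish with the fact that a direct summand of a Cohen--Macaulay module is Cohen--Macaulay. Your explicit insistence that the splitting $V\otimes R\subset \Gamma(Q_X(\cM)[-\dim X])$ be taken $G$-equivariantly (so that it survives taking invariants) is a point the paper leaves implicit, and is a worthwhile clarification rather than a deviation.
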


We give an explicit special case when $R$ is a polynomial ring in Example \ref{ex:CM}.

We work with complex algebraic varieties with their analytic topologies throughout this paper. The material from the first section of this paper
is drawn from the second author's thesis \cite{hiattT}. Our thanks to the referee for careful reading, and for many helpful comments.


\section{Rational Singularities for Pure Hodge Modules}
Let $X$ be an irreducible complex variety. Assume $U \subset X$ is a smooth open subset, and let $\bH$ be a  variation of Hodge structure 
(always assumed polarizable) on $U.$  Koll\'ar  \cite[\S 5]{kollar1} conjectured that there were objects $S(X, \bH)$ and $Q(X,\bH)$ that should behave similar to $\omega_{X}$ and $\cO_{X}$ when $X$ is smooth. Specifically, $S(X, \bH)$ and $Q(X,\bH)$ should have the following properties:
\begin{itemize}

\item $S(X, \bH)$ is a coherent sheaf and $Q(X, \bH)$ is in the bounded coherent derived category $D^{b}_{coh}(X)$

\item Assume $f: X' \rightarrow X$ is a birational projective map. Since $f$ is birational, $X$ and $X'$ contain isomorphic open subsets. If $\bH'$ is the pullback of the variation of Hodge structure $\bH$, then 
$$\bR f_{*}S(X',\bH') = S(X, \bH) \quad \text{and} \quad  \bR f_{*}Q(X', \bH') = Q(X,\bH).$$

\item If $X$ is smooth and $\bH$ gives a variation of Hodge structure over an open subset $U \subset X$ such that $X \backslash U$ is a divisor with normal crossings only, then 
$$S(X, \bH) = \omega_{X} \otimes \hfill ^{u}\cF^{b}(\bH) \quad  \text{and} \quad  Q(X, \bH) =  \hfill ^{l}Gr^{0}(\bH).$$
See \cite[Def. 2.3]{kollar1} for the definition of $^{u}\cF^{b}(\bH)$ and $^{l}Gr^{0}(\bH)$.

\item If $\bH^{*}$ is the dual of $\bH$, then $S(X, \bH^{*}) =  \cD(Q(X, \bH))$ and $Q(X, \bH^{*}) = \cD(S(X,\bH))$, where $\cD = \bR \cH om( \bullet, \omega^{\bullet}_{X}).$
\end{itemize}

\begin{nota}
In this paper, $\omega^{\bullet}_{X} \in D^{b}_{coh}(\cO_{X}))$ will denote the dualizing complex of $X$. Note that when $X$ is smooth, $\omega^{\bullet}_{X} \simeq \Omega^{\dim X}_{X}[\dim X]$. 
\end{nota}

This conjecture was proven by M. Saito \cite{saito3} using properties of Hodge modules. This section will be devoted to reviewing Saito's results \cite{saito3}, and applying these results to define rational singularities for Hodge modules.

Although we will mostly treat Hodge module theory as a black box,
we will need to recall a few details. 
 We refer to Saito's foundational papers \cite{saito1, saito2}, and to Schnell's overview \cite{schnell} for a more precise account.
  Let $X$ denote an algebraic variety of dimension $n$. 
  For simplicity, in this section, we will always assume $X$ embeds into a smooth variety $Y$ of dimension $m$,  and let 
  $$\bT 
  i: X \arrow[r, hook] & Y
  \eT$$
denote the embedding. Saito associates to $X$ a semisimple $\Q$-linear abelian category $HM(X)$
of  Hodge modules, such that every simple object is generically a  variation of Hodge structure on a closed irreducible subvariety of $X$.
For the semisimplicity statement, we need to  assume that Hodge modules are polarizable.
To be a bit more explicit, a Hodge module consists of a right  regular holonomic $\cD_Y$-module $M$ supported
on $X$, a good filtration $F_\dt$ on $M$, and a $\Q$-structure provided by a perverse sheaf $L$ of $\Q$-vector spaces and an isomorphism $DR(M)\cong \C\otimes L$
subject to additional conditions.
Although $M$ is a $\cD_Y$-module, the associated graded of the  de Rham complex with respect to the induced filtration
$$F_pDR(M) = [F_{p- m}M\otimes \wedge^{m} T_Y \to F_{p-m+1}M\otimes \wedge^{m-1} T_Y\to \ldots F_pM][m]$$
 is a complex of $\OO_X$-modules. We refer the reader to \cite{htt} for more details into $\cD$-modules. Below, we find it convenient to refer to $\cD_Y$-modules supported on $X$
 simply as $\cD_X$-modules. When $X$ is smooth, these are indeed the same thing by a theorem of Kashiwara \cite[Theorem 1.6.1]{htt}.
 
 The category $HM(X)$ has a decomposition
 $$HM(X) = \bigoplus_{w\in \Z, Z\subseteq X}HM_Z(X,w)$$
 where $HM_Z(X,w)$ is the category of Hodge modules of
 weight $w$ and strict support  along a closed irreducible subvariety $Z\subset X$. A Hodge module is strictly supported on a closed subvariety  $Z$, when  it is supported on  $Z$ and it  has no nontrivial subquotient  supported on a proper subvariety of $Z$.
 One of Saito's key results \cite[theorem 3.21]{saito2} is
that $HM_Z(X, w)$ is equivalent to the category of  variations of Hodge structure of weight $w-\dim Z$ generically defined on $Z$.
If $\bH $ is a variation of Hodge structure defined on a Zariski open $j:U\to Z$, it extends to a Hodge module with underlying
perverse sheaf given by intersection cohomology complex $j_{!*}L[\dim Z]$.
Saito \cite{saito2} constructs a bigger abelian category of mixed Hodge modules
 $MHM(X)\supset HM(X)$. A mixed Hodge module $\cN \in MHM(X)$ contains a finite, increasing weight filtration $W_{\bullet}\cN$, where $Gr^{W}_{i}\cN$ is a pure Hodge module of weight $i$ on $X$. For a mixed Hodge module $\cN \in MHM(X)$, we will denote the above data as 
$$\cN = ((N, F_{\bullet}N), W_{\bullet}\cN, L),$$
where $N$ is the underlying $\cD_{X}$-module and $L$ is the perverse sheaf. The weight filtration will be omitted if $\cN$ is pure of weight $w$. It is worth noting that the derived
category of mixed Hodge modules has standard operations such as direct images, denoted here by $f_+$, and these are compatible with the corresponding operations  on the
constructible derived category.
 
We begin with a slightly generalized version of $S(X, \bH)$ and $Q(X, \bH)$ given above.

\begin{defn}\label{SQdefn} Let $\cN \in MHM(X)$ be a mixed Hodge module. If $p(\cN) = \text{min}\{p: F_{p}N \neq 0\}$, where $N$ is the underlying $\cD_{X}$-module of $\cN$, then  set
\begin{equation}
S_{X}(\cN) := Gr^{F}_{p(\cN)} DR(\cN) = F_{p(\cN)}N
\end{equation}
\begin{equation}
 Q_{X}(\cN):= \cD(S_{X}(\D(\cN))) \in D^{b}_{coh}(X),
 \end{equation}
 
where $\cD(\bullet) = \bR \cH om_{X}( \bullet, \omega^{\bullet}_{X})$ and $\D(\cN)$ is the dual Hodge Module. 
\end{defn}

\begin{prop}\label{dualprop}\cite{saito1}
If $\cN \in MHM(X)$, then we have the quasi-isomorphism
$$ \bR \cH om_{X}(Gr^{F}_{p}DR(\cN), \omega^{\bullet}_{X}) \simeq Gr^{F}_{-p}DR(\D(\cN)) \quad  \text{for every $p \in \Z$}, $$ 
where $\D(\cN)$ is the dual Hodge module.
\end{prop}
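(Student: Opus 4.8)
The plan is to obtain this as a consequence of Saito's filtered duality theorem for the de Rham functor, of which the stated quasi-isomorphism is essentially a reformulation; the real content lies in the bookkeeping and in isolating the single non-formal input. Unwinding Definition~\ref{SQdefn}, both sides are already complexes of $\OO_{Y}$-modules supported on $X$ (recall that $M$ is a $\cD_{Y}$-module and $Gr^{F}_{p}DR(\cN)=F_{p}DR(N)/F_{p-1}DR(N)$ is the complex of $\OO_{Y}$-modules written out in the text, coherent because $F$ is a good filtration), and $\bR\cH om_{X}(-,\omega^{\bullet}_{X})\cong\bR\cH om_{Y}(i_{*}(-),\omega^{\bullet}_{Y})$ by Grothendieck duality for the closed immersion $i\colon X\hookrightarrow Y$. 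So the statement is really one about the ambient smooth variety $Y$, and I would prove it for a filtered regular holonomic $\cD_{Y}$-module $(N,F)$ that underlies a mixed Hodge module.

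Next I would set up the filtered duality on $Y$. The underlying filtered $\cD_{Y}$-module of $\D\cN$ is Saito's filtered dual $\mathbb{D}_{Y}(N,F)$, built from $\bR\cH om_{\cD_{Y}}(N,\cD_{Y})$ by a cohomological shift and a twist by $\omega_{Y}^{-1}$. Resolving the left $\cD_{Y}$-module $\OO_{Y}$ by the filtered Spencer complex identifies the filtered de Rham complex of $(\cD_{Y},F)$ with the dualizing complex placed in filtration degree $0$; inserting this, together with the self-duality of the Koszul complex $\wedge^{\bullet}T_{Y}$ up to the twist $\wedge^{\dim Y}T_{Y}\cong\omega_{Y}^{-1}$, into the definition of $\mathbb{D}_{Y}$ produces a natural isomorphism of \emph{filtered} complexes
\[
DR\big(\mathbb{D}_{Y}(N,F)\big)\;\simeq\;\bR\cH om_{\OO_{Y}}\big(DR(N,F),\,\omega^{\bullet}_{Y}\big).
\]
Applying $Gr^{F}_{-p}$ to both sides then gives the proposition: the left side yields $Gr^{F}_{-p}DR(\D\cN)$, and on the right, with the dual filtration on the $\bR\cH om$ and the trivial filtration on $\omega^{\bullet}_{Y}$, the term $Gr^{F}_{-p}$ is $\bR\cH om_{\OO_{Y}}\big(Gr^{F}_{p}DR(N),\omega^{\bullet}_{Y}\big)$, which is exactly where the reindexing $p\mapsto-p$ in the statement comes from. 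Transporting the result back along $i_{*}$ finishes the proof over $X$.

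The step I do not expect to be formal, and which is the main obstacle, is the passage from the filtered isomorphism above to its graded consequence: one must know that $Gr^{F}$ commutes with $\bR\cH om_{\cD_{Y}}(-,\cD_{Y})$ and with the de Rham differentials, i.e.\ that the filtered complexes in play are \emph{strict}. Equivalently, one needs $Gr^{F}N$ to be Cohen--Macaulay over $Gr^{F}\cD_{Y}=\mathrm{Sym}_{\OO_{Y}}T_{Y}$, with support the characteristic variety, pure of dimension $\dim Y$, so that the $\mathcal{E}xt$-sheaves computing the dual are concentrated in a single cohomological degree. This is part of Saito's structure theory for Hodge modules: it holds for the simple objects $j_{!*}\bH$ via the theory of the $V$-filtration along a normal crossing divisor, and, using strictness of the weight filtration, it is inherited under extensions and under the standard functors, hence holds for every $\cN\in MHM(X)$. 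Granting this, the remaining ingredients---the Spencer resolution, the Koszul self-duality, and the compatibility with $i_{+}$---are routine; the one delicate point is tracking the shift conventions in $DR$ so that cohomological degrees and filtration indices come out exactly as stated.
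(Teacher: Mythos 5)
The paper offers no proof of this proposition at all---it is quoted directly from Saito \cite{saito1} (the duality isomorphism for $\DR$ of the filtered dual together with the Cohen--Macaulay property of $Gr^{F}N$, i.e.\ 2.4.3 and Lemme 5.1.13 of that paper)---and your outline is a faithful reconstruction of exactly that argument: reduce to the ambient smooth $Y$ by duality for the closed immersion, identify $DR(\mathbb{D}_{Y}(N,F))$ with $\bR\cH om_{\OO_{Y}}(DR(N,F),\omega^{\bullet}_{Y})$ via the Spencer/Koszul resolutions, and pass to $Gr^{F}$. You correctly isolate the one non-formal input, namely strictness of the filtered dual coming from $Gr^{F}N$ being Cohen--Macaulay over $\mathrm{Sym}_{\OO_{Y}}T_{Y}$ (which for mixed Hodge modules follows from the pure case by bistrictness of the weight filtration), so the proposal is sound and nothing essential is missing beyond the index-convention bookkeeping you already flag.
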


\begin{rmk}\label{rmk:polar}
 When $ \cN \in HM(X,w)$ is a pure Hodge module of weight $w$,
 then a polarization gives an isomorphism $\D(\cN) \cong \cN(w),$
 where 
$$\cN(w)= ((N, F_{\bullet -w}N), W_{\bullet-2w}\cN, L \otimes_{\Q} \Q(w))$$
is the Tate twist of $\cN$ by the weight $w$. Therefore, $Q_{X}(\cN)= \cD(S_{X}(\cN)). $
  
\end{rmk}

\begin{lemma}\label{Q-lemma} If $\cN \in HM(X,w)$ is a pure Hodge module of weight $w$, then $Q_{X}(\cN) \simeq Gr^{F}_{-p(\cN) -w}DR(\cN)$. 
\end{lemma}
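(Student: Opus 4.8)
The plan is to unwind the definition of $Q_X(\cN)$ and reduce the statement to Proposition \ref{dualprop} together with the bookkeeping for a Tate twist. Since $\cN$ is pure of weight $w$, Remark \ref{rmk:polar} already gives $Q_X(\cN) = \cD(S_X(\cN))$, and $S_X(\cN) = Gr^F_{p(\cN)}DR(\cN)$ by Definition \ref{SQdefn}. So the content is entirely in evaluating $\cD$ on this graded de Rham complex and keeping track of indices.

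First I would apply Proposition \ref{dualprop} with $p = p(\cN)$ to get
$$Q_X(\cN) = \bR\cH om_X\bigl(Gr^F_{p(\cN)}DR(\cN),\, \omega^\bullet_X\bigr) \simeq Gr^F_{-p(\cN)}DR(\D(\cN)).$$
Next I would use the polarization isomorphism $\D(\cN) \cong \cN(w)$ from Remark \ref{rmk:polar}. The underlying filtered $\cD$-module of $\cN(w)$ is $(N, F_{\bullet-w}N)$, and feeding this shifted filtration into the de Rham complex shows $Gr^F_q DR(\cN(w)) \simeq Gr^F_{q-w}DR(\cN)$ for every $q$; taking $q = -p(\cN)$ gives $Gr^F_{-p(\cN)}DR(\D(\cN)) \simeq Gr^F_{-p(\cN)-w}DR(\cN)$. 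Combining the two displays yields $Q_X(\cN) \simeq Gr^F_{-p(\cN)-w}DR(\cN)$, as desired. Alternatively, one can skip the second step by applying Proposition \ref{dualprop} directly to $\D(\cN)$ at the index $p(\D(\cN)) = p(\cN)+w$ and invoking biduality $\D(\D(\cN)) \cong \cN$; the two routes agree.

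I do not expect a genuine obstacle here: granting Proposition \ref{dualprop}, this is a short index computation. The one point that needs care is the direction of the shift induced by the Tate twist — the filtration on $\cN(w)$ is $F_{\bullet-w}N$, which moves $Gr^F DR$ by $w$ in the correct direction, and a sign slip would produce $-p(\cN)+w$ instead of $-p(\cN)-w$. Verifying this, and checking $p(\D(\cN)) = p(\cN)+w$ if one takes the alternative route, is routine from the definitions.
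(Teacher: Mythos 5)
Your argument is correct and is essentially the paper's proof: both reduce to Proposition \ref{dualprop} plus the identification $\D(\cN)\cong\cN(w)$ from Remark \ref{rmk:polar} and the resulting shift of the Hodge filtration by $w$; your ``alternative route'' via $p(\D(\cN))=p(\cN)+w$ is in fact exactly the order of steps the paper uses. The index bookkeeping, including the direction of the Tate-twist shift, checks out.
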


\begin{proof}
By the previous remark, $\D(\cN) \cong \cN(w)$. Therefore, $p(\D(\cN)) =  p(\cN)+w$. Applying Proposition \ref{dualprop} we obtain
$$Q_{X}(\cN) = \cD(Gr^{F}_{p(\cN) }DR(\cN)) = \cD(Gr^{F}_{p(\cN) +w}DR(\cN(w))) \simeq Gr^{F}_{-p(\cN) -w}DR(\D(\cN(w)))$$
$$=  Gr^{F}_{-p(\cN) -w} DR(\cN) .$$ 
\end{proof}

\begin{rmk}
If $X$ is irreducible, and  $\cN \in HM_{X}(X,w)$ is a Hodge module strictly supported on $X$,  then there exists a variation of Hodge structure $\bH$ on a smooth open subset $U \subset X$ such that $\cN$ restricted to $U$ is isomorphic to $\bH$ \cite[Thm. 3.21]{saito2}. In this situation, $S_{X}(\cN)= S(X,\bH)$ and $Q_{X}(\cN) = Q(X, \bH)$ satisfy the conditions conjectured above.
\end{rmk}

\begin{ex}
Assume $X$ is a smooth irreducible variety. If  $\cN =\Q_{X}^{H}[n],$ then $\cN$ is a  pure Hodge module of weight $n,$ and 
$$p(\Q^{H}_{X}[n]) = -n \quad \text{and} \quad S_{X}(\Q^{H}_{X}[n]) = \Omega^{n}_{X}.$$
By Lemma \ref{Q-lemma}, we have 
$$Q_{X}(\Q^{H}_{X}[n]) = Gr^{F}_{0}(DR(\Q^{H}_{X}[n])) = \mathcal{O}_{X}[n].$$
\end{ex}

\begin{prop}\label{MinProp}\cite[Prop. 4.7]{ks} If $X$ is smooth, $\cN \in MHM(X)$, and $\alpha = min\{w: W_{w}\cN \neq 0\}$, then 
$$Gr^{F}_{p}DR(\cN) \simeq 0 \quad \text{unless $p(\cN) \leq p \leq  - p(\cN) -\alpha $}. $$
\end{prop}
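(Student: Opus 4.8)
The plan is to prove the two inequalities separately. The lower bound $p(\cN)\le p$ is immediate from the shape of the complex: since $Gr^{F}_{p}DR(\cN)$ is built out of the terms $Gr^{F}_{p-m+\ell}N\otimes\wedge^{m-\ell}T_{Y}$ for $\ell=0,\dots,m$, it is already the zero complex as soon as $F_{q}N=0$ for every $q$ with $p-m\le q\le p$, and this holds whenever $p<p(\cN)$ by the very definition of $p(\cN)$.

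For the upper bound I would pass to the dual Hodge module $\D(\cN)$, which is again a mixed Hodge module on $X$. By Proposition \ref{dualprop}, $\cD(Gr^{F}_{p}DR(\cN))\simeq Gr^{F}_{-p}DR(\D(\cN))$ in $D^{b}_{coh}(X)$; since $\cD=\bR\cH om_{X}(-,\omega^{\bullet}_{X})$ is a duality there (applying it twice recovers the object), $Gr^{F}_{p}DR(\cN)\simeq 0$ if and only if $Gr^{F}_{-p}DR(\D(\cN))\simeq 0$. Applying the already-proved lower bound to $\D(\cN)$, the right-hand side vanishes whenever $-p<p(\D(\cN))$. So it suffices to establish
$$p(\D(\cN)) \ge p(\cN)+\alpha,$$
for then $p>-p(\cN)-\alpha$ forces $-p<p(\D(\cN))$, giving the desired vanishing.

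To prove this inequality I would descend to the graded pieces of the weight filtration. Because $W_{\bullet}$ consists of sub-mixed-Hodge-modules of $\cN$, with $F_{\bullet}$ restricting to the Hodge filtration of each $W_{k}\cN$, we have $F_{p}N\ne 0$ precisely when $F_{p}(Gr^{W}_{k}N)\ne 0$ for some $k$; hence $p(\cN)=\min_{k}p(Gr^{W}_{k}\cN)$, and the same identity for $\D(\cN)$. Now the duality functor on mixed Hodge modules reverses the weight filtration, $Gr^{W}_{k}\D(\cN)\cong\D(Gr^{W}_{-k}\cN)$ (a standard fact, \cite{saito2}), and $Gr^{W}_{-k}\cN$ is a polarizable pure Hodge module of weight $-k$, so by Remark \ref{rmk:polar} $\D(Gr^{W}_{-k}\cN)\cong (Gr^{W}_{-k}\cN)(-k)$, whose Hodge filtration is that of $Gr^{W}_{-k}\cN$ shifted by $k$; thus $p(Gr^{W}_{k}\D(\cN))=p(Gr^{W}_{-k}\cN)-k$. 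Re-indexing by $j=-k$,
$$p(\D(\cN))=\min_{j}\bigl(p(Gr^{W}_{j}\cN)+j\bigr),$$
the minimum taken over those $j$ with $Gr^{W}_{j}\cN\ne 0$. For every such $j$ one has $p(Gr^{W}_{j}\cN)\ge p(\cN)$ and $j\ge\alpha$ (since $\alpha$ is also the least weight occurring in $Gr^{W}_{\bullet}\cN$), hence $p(Gr^{W}_{j}\cN)+j\ge p(\cN)+\alpha$ and the inequality follows.

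The only substantive input here is the self-duality of $Gr^{F}DR$ recorded in Proposition \ref{dualprop}, combined with Grothendieck biduality in $D^{b}_{coh}(X)$; the remainder is formal bookkeeping. The step needing the most care is the reduction $p(\cN)=\min_{k}p(Gr^{W}_{k}\cN)$ (and its analogue for $\D(\cN)$), which relies on the compatibility of $W_{\bullet}$ with $F_{\bullet}$ built into the definition of a mixed Hodge module, together with the polarizability of each $Gr^{W}_{k}\cN$, so that Remark \ref{rmk:polar} applies and $\D$ acts on the graded pieces simply as a Tate twist.
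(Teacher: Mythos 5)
Your proof is correct, but it is organized differently from the paper's. The paper first handles the pure case using the self-duality $\cD(Gr^{F}_{p}DR(\cN))\simeq Gr^{F}_{-p-\alpha}DR(\cN)$ coming from a polarization, and then runs an induction up the weight filtration, propagating the vanishing through the exact triangles $Gr^{F}_{p}DR(Gr^{W}_{k}\cN)\to Gr^{F}_{p}DR(\cN/W_{k-1}\cN)\to Gr^{F}_{p}DR(\cN/W_{k}\cN)\to$, using at each stage that $p(Gr^{W}_{k}\cN)\ge p(\cN)$ and $k\ge\alpha$. You instead dualize the whole mixed Hodge module once, reduce the upper bound for $\cN$ to the trivial lower bound for $\D(\cN)$ via Proposition \ref{dualprop} and Grothendieck biduality, and do all the bookkeeping numerically through the identity $p(\cN)=\min_{k}p(Gr^{W}_{k}\cN)$ together with the weight reversal $Gr^{W}_{k}\D(\cN)\cong\D(Gr^{W}_{-k}\cN)$ and Remark \ref{rmk:polar}. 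The underlying inputs are the same in both arguments (duality of $Gr^{F}DR$, polarizability of the pure graded pieces, and strictness of $F_{\bullet}$ with respect to $W_{\bullet}$, which is what justifies both your identity $p(\cN)=\min_{k}p(Gr^{W}_{k}\cN)$ and the paper's inequality $p(Gr^{W}_{\alpha+i}\cN)\ge p(\cN)$), but your version replaces the triangle-by-triangle induction with a single clean inequality $p(\D(\cN))\ge p(\cN)+\alpha$; the trade-off is that you must invoke the compatibility of $\D$ with the weight filtration on mixed Hodge modules, a fact the paper's proof does not need since it only ever dualizes pure objects.
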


\begin{proof}
By definition of $p(\cN)$, we know $\GR = 0$ if $p < p(\cN)$. So, we just have to show the other inequality.

When $\cN$ is a pure Hodge module of weight $\alpha$, there is a quasi-isomorphism
$$\bR \cH om_{X}(\GR,\omega^{\bullet}_ {X}) \simeq Gr^{F}_{-p -\alpha}DR(\cN).$$
If $p > -p(\cN) -\alpha$, then $-p  - \alpha < p(\cN)$ and  $ Gr^{F}_{-p -\alpha}DR(\cN) = 0$. The quasi-isomorphism above implies $\GR \simeq 0$ for $p > -p(\cN) -\alpha$.

Now, we have the exact sequence of mixed Hodge modules
$$0 \rightarrow Gr^{W}_{\alpha}\cN \rightarrow \cN \rightarrow \cN/W_{\alpha}\cN \rightarrow 0.$$
There is an exact triangle
$$\bT 
Gr^{F}_{p}DR(Gr^{W}_{\alpha}\cN) \arrow[r] & Gr^{F}_{p}DR(\cN) \arrow[r] & Gr^{F}_{p}DR(\cN / W_{\alpha}\cN) \arrow[r, "+1"] & \hfill.
\eT$$
If $p > - p(\cN) - \alpha$, then $Gr^{F}_{p}DR(Gr^{W}_{\alpha}\cN) \simeq 0$. So it suffices to show $ Gr^{F}_{p}DR(\cN / W_{\alpha}\cN) = 0$ for $p > -p(\cN) - \alpha$.
There is an exact triangle
$$\bT 
Gr^{F}_{p}DR(Gr^{W}_{\alpha +1}\cN) \arrow[r] & Gr^{F}_{p}DR(\cN / W_{\alpha}\cN) \arrow[r] & Gr^{F}_{p}DR(\cN / W_{\alpha+1}\cN) \arrow[r, "+1"] & \hfill.
\eT$$
 For the mixed Hodge module $\cN/W_{\alpha}\cN$, we must have $p(\cN/W_{\alpha}\cN) \geq p(\cN)$. So we must have $p(Gr^{W}_{\alpha +1}\cN) \geq p(\cN)$ as well. This forces $Gr^{F}_{p}DR(Gr^{W}_{\alpha +1}\cN) \simeq 0$ for $p> -p(Gr^{W}_{\alpha +1}\cN) - \alpha -1.$ In particular $Gr^{F}_{p}DR(Gr^{W}_{\alpha +1}\cN) \simeq 0$ for $p > -p(\cN) - \alpha.$ So now it suffices to show $ Gr^{F}_{p}DR(\cN / W_{\alpha +1}\cN) = 0$ for $p > -p(\cN) - \alpha$. Continuing in this same fashion, and using the fact that $Gr^{F}_{p}DR(Gr^{W}_{\alpha +i}\cN) \simeq 0$ whenever $i >0$ and $p> -p(\cN) - \alpha$, we must have $Gr^{F}_{p}DR(\cN) \simeq 0$ for $p > - p(\cN) - \alpha$ because the weight filtration is finite.

\end{proof}

 Assume $X$ is a smooth irreducible variety. Let $\cN \in HM_{X}(X,w)$ and assume the singularities of  $\cN$  are  along a simple normal crossing divisor $D$, i.e.   suppose that  the restriction of $\cN$  to $X \backslash D$ is  a  variation of Hodge structure $\bH= ((\cL, F^{\bullet}), L)$ of weight $w - n$ on the open set $U = X \backslash D.$ Let $\iota: D \hookrightarrow X$ denote the natural map. If $j:U \hookrightarrow X$ is the natural map, recall that the Hodge filtration on $\cN$ is given by
\begin{equation}
\displaystyle F_{p}\cN = \sum_{i \geq 0} \bigg( \Omega^{n}_{X} \otimes ( \tilde{\cL}^{>-1} \cap j_{\ast}F^{i -p -n}\cL) \bigg) F_{i} \cD_{X},
\end{equation}
where  $\tilde{\cL}^{> \alpha}$  is Deligne's extension of $\cL$ with eigenvalues of residue of connection in $(\alpha, \alpha +1]$ \cite{saito4}.

\begin{prop}\label{prop:ncd}
For the Hodge module $\cN$ above, we have
\begin{equation}
S_{X}(\cN) = \Omega^{n}_{X} \otimes (\tilde{\cL}^{>-1} \cap j_{\ast}F^{-p(\cN) -n }\cL).
\end{equation}

\begin{equation}
Q_{X}(\cN) \simeq \tilde{\cL}^{\geq 0} / (\tilde{\cL}^{\geq 0} \cap j_{\ast}F^{w + p(\cN) +1}\cL)[n].
\end{equation}
\end{prop}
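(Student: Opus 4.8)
The plan is to compute $S_X(\cN)$ directly from the explicit formula for the Hodge filtration, and then obtain $Q_X(\cN)$ by dualizing via Lemma \ref{Q-lemma}. For the first formula, I would take $p = p(\cN)$ in the displayed expression for $F_p\cN$. Since $F_i\cD_X$ is generated over $\OO_X$ by differential operators of order $\leq i$, and since for $p = p(\cN)$ the $i=0$ term already involves $F^{-p(\cN)-n}\cL$, the point is that the $i \geq 1$ terms contribute nothing new to the minimal piece: applying a vector field lowers the Hodge filtration index $F^{i-p-n}$ but the resulting sections, after the order filtration is accounted for, still land inside $\Omega^n_X \otimes (\tilde\cL^{>-1} \cap j_*F^{-p(\cN)-n}\cL)$ because $\tilde\cL^{>-1}$ is a lattice stable under $\nabla$ with log poles and the Griffiths transversality/residue bookkeeping forces the image back into the degree-zero term. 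So $F_{p(\cN)}\cN = \Omega^n_X \otimes (\tilde\cL^{>-1} \cap j_*F^{-p(\cN)-n}\cL)$, which is exactly the first claimed identity since $S_X(\cN) = F_{p(\cN)}N$ by Definition \ref{SQdefn}.

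For the second formula, I would invoke Lemma \ref{Q-lemma}, which gives $Q_X(\cN) \simeq Gr^F_{-p(\cN)-w}DR(\cN)$. Now I use the fact that for a Hodge module with log-singularities along an SNC divisor, the de Rham complex $Gr^F_k DR(\cN)$ can be computed term by term from the filtered logarithmic de Rham complex of the Deligne extension; in the minimal degree the complex reduces to a single sheaf. Concretely, one identifies $Gr^F_{-p(\cN)-w}DR(\cN)$ with the cokernel of the map coming from $\Omega^{n-1}(\log D) \otimes (\text{appropriate filtered piece})$ into $\Omega^n(\log D) \otimes (\text{appropriate piece})$, twisted back down by $\Omega^{-n}$; after tracking the residue eigenvalue conditions, the relevant lattice is $\tilde\cL^{\geq 0}$ (the shift by one in the eigenvalue range is exactly the passage from the $>-1$ lattice to the $\geq 0$ lattice under duality $\bH \leftrightarrow \bH^*$ together with the Tate twist by $w$), and the relevant Hodge sub-piece that gets quotiented out is $j_*F^{w+p(\cN)+1}\cL$. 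This yields $Q_X(\cN) \simeq \tilde\cL^{\geq 0}/(\tilde\cL^{\geq 0} \cap j_*F^{w+p(\cN)+1}\cL)[n]$.

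Alternatively — and this is probably cleaner — I would derive the $Q$-formula from the $S$-formula applied to the dual variation: by Remark \ref{rmk:polar} and Proposition \ref{dualprop}, $Q_X(\cN) = \cD(S_X(\cN))$, and one can compute this local-cohomology-style dual of $\Omega^n_X \otimes (\tilde\cL^{>-1} \cap j_*F^{-p(\cN)-n}\cL)$ using the standard duality between the $>-1$ and $\geq 0$ Deligne lattices (residues of $\nabla$ on $\tilde\cL^{>-1}$ and $\tilde\cL^{\geq 0}$ for $\cL$ and $\cL^*$ are negatives of each other, shifted), together with the duality $F^a\cL$ versus $\cL/F^{b}\cL$ on Hodge filtrations, matching $a = -p(\cN)-n$ on $\cL$ with $b = w+p(\cN)+1$ coming out on the quotient side after the weight normalization $\D(\cN) \cong \cN(w)$. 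Either route reduces the proposition to Kollár's SNC-case computation, which Saito's framework makes available.

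\emph{Main obstacle.} The delicate point is the bookkeeping of indices: keeping straight the shift between Griffiths' $F^\bullet$ on the VHS and Saito's $F_\bullet$ on the $\cD$-module (an $n$-shift plus a sign), the passage between the $\tilde\cL^{>-1}$ and $\tilde\cL^{\geq 0}$ lattices under duality, and the exact value $w+p(\cN)+1$ of the Hodge index in the quotient. Showing that the higher-order terms ($i \geq 1$) in the formula for $F_{p(\cN)}\cN$ genuinely do not enlarge the minimal piece — i.e.\ that $S_X(\cN)$ really is just the $i=0$ term — is the one genuinely non-formal step and deserves a careful argument using that $\tilde\cL^{>-1}$ is preserved by the log connection and that $F^\bullet$ is decreasing.
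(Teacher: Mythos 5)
For formula (4) your conclusion is right, but the mechanism you propose for discarding the $i\ge 1$ terms is not the correct one, and no Griffiths-transversality or residue bookkeeping is needed: by minimality of $p(\cN)$ one has $F_{p(\cN)-1}N=0$, which forces $\tilde{\cL}^{>-1}\cap j_{*}F^{1-p(\cN)-n}\cL=0$ and hence $F^{1-p(\cN)-n}\cL=0$ on $U$; since $F^{\bullet}$ is decreasing, every term with $i\ge 1$ in the sum defining $F_{p(\cN)}N$ is literally zero, and only the $i=0$ term survives. The step you single out as the ``one genuinely non-formal step'' is therefore immediate, which is why the paper treats (4) as clear from the definition.

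The genuine gap is in your derivation of (5). The ``fact'' you invoke --- that $Gr^{F}_{k}DR(\cN)$ is computed termwise by the filtered logarithmic de Rham complex of the Deligne extension --- is false for the minimal extension $\cN$ at general $k$: already for $\cN=\Q^{H}_{X}[n]$ it would give $\Omega^{n}_{X}(\log D)$ at the top index instead of $\Omega^{n}_{X}$, contradicting (4). Saito's \cite[Prop. 3.11]{saito2} computes $Gr^{F}_{\bullet}DR$ of the localization $\cN(*D)$, not of $\cN$, in terms of $\tilde{\cL}^{\geq 0}$. So your Route A requires the comparison $Gr^{F}_{-p(\cN)-w}DR(\cN)\simeq Gr^{F}_{-p(\cN)-w}DR(\cN(*D))$, and supplying it is the real content of the proof: the paper uses the exact sequence $0\to\cN\to\cN(*D)\to\cH^{1}_{D}(\cN)\to 0$ (valid because $\cN$ has no subobject supported on $D$), the weight bound $Gr^{W}_{i}\cH^{1}_{D}(\cN)=0$ for $i<w+1$, and Proposition \ref{MinProp} to kill $Gr^{F}_{-p(\cN)-w}DR(\cH^{1}_{D}(\cN))$. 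This weight argument is absent from your proposal. After that, the log complex collapses to its degree $-n$ term because $Gr^{F}_{-p(\cN)-w+i}\tilde{\cL}^{\geq 0}=0$ for $i>0$ (as $p(\cN)+w$ is the minimal Hodge index of $\cL$), not by taking a cokernel out of the $\Omega^{n-1}(\log D)$ term. Your alternative Route B ($Q_{X}(\cN)=\cD(S_{X}(\cN))$ plus lattice duality) could in principle be made to work, but as sketched it presupposes that the dual complex of $S_{X}(\cN)$ is concentrated in a single degree, which is essentially Lemma \ref{lemma:ncdrs} --- a statement the paper deduces from this proposition rather than the other way around.
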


\begin{proof}

This is discussed in \cite{saito3}. For convenience to the reader, a proof will be given.  

Equation (4) is clear by the definition of the Hodge filtration for $\cN$. To show equation (5),  consider the exact triangle 
$$\begin{tikzcd}
\iota_{+}\iota^{!}\cN \arrow[r] & \cN \arrow[r] & j_{+}j^{*}\cN \arrow[r,"+"] & \hfill.
\end{tikzcd}$$
Since $\cN$ has no non-trivial sub-object with support on $D$, there is an exact sequence
$$ 0 \rightarrow \cN \rightarrow \cN(*D) \rightarrow \iota_{+}\cH^{1}\iota^{!}\cN \rightarrow 0.$$
For notation, let $\cH^{1}_{D}(\cN) =  \iota_{+}\cH^{1}\iota^{!}(\cN).$ Since $\cN$ has pure weight $w$, we have $Gr^{W}_{i}(\cH^{1}_{D}(\cN)) = 0$ for $i < w +1$ \cite[Prop. 2.26]{saito2}.  By Proposition \ref{MinProp}, we obtain 
$$Gr^{F}_{-p(\cN) -w}DR(\cH^{1}_{D}(\cN)) \simeq 0.$$ So, from the exact sequence above, 
$$Q_{X}(\cN) = Gr^{F}_{-p(\cN) -w}DR(\cN) \simeq Gr^{F}_{-p(\cN) -w}DR(\cN(\ast D)).$$
 
By \cite[Prop. 3.11]{saito2}, there is a quasi-isomorphism
$$Gr^{F}_{-p(\cN) -w}DR(\cN(\ast D)) \simeq Gr^{F}_{-p(\cN) -w}\tilde{\cL}^{\geq 0} \rightarrow \Omega^{1}(\log D) \otimes Gr^{F}_{-p(\cN) -w +1}\tilde{\cL}^{\geq 0} \rightarrow \cdots.$$
On the variation of Hodge structure $\bH,$ we have $p(\cN) +w = \text{min}\{p: Gr^{p}_{F}\cL \neq 0\}$. So, when $i > 0$, we must have
$$Gr^{F}_{-p(\cN) -w +i}\tilde{\cL}^{\geq 0} = (\tilde{\cL}^{\geq 0} \cap j_{\ast}F^{p(\cN) +w-i}\cL)/ (\tilde{\cL}^{\geq 0} \cap j_{\ast}F^{p(\cN) +w -i+1}\cL) = 0.$$
Hence, 
$$Q_{X}(\cN)  \simeq Gr^{F}_{-p(\cN) -w} \tilde{\cL}^{\geq 0}[n] = \tilde{\cL}^{\geq 0} / (\tilde{\cL}^{\geq 0} \cap j_{\ast}F^{w + p(\cN) +1}\cL)[n].$$
\end{proof}


Assume $X$ is an irreducible variety and $\cN \in HM_{X}(X,w)$. If $X$ is smooth, and $\cN$ is the trivial Hodge module, then we see that $Q_{X}(\cN)$ is given by a coherent sheaf (up to a shift). Similarly, if $X$ is smooth, and $\cN$ is a variation of Hodge structure on $X$, then $Q_{X}(\cN)$ is a coherent sheaf.  This continues to hold when the singularity of $\cN$ is a simple normal crossing divisor. In general, the object $Q_{X}(\cN)$ is not a coherent sheaf, but $Q_{X}(\cN) \in D^{b}_{coh}(X)$. We will investigate Hodge modules on $X$ that have the property that $Q_{X}(\cN)$ is just a coherent sheaf. 

\begin{defn}\label{defn:rs}
 If $X$ is an irreducible variety, a Hodge module $\cN \in HM_{X}(X,w)$ has rational singularities (or has RS) if $$Q_{X}(\cN) \simeq \cH^{-n}(Q_{X}(\cN))[n].$$
\end{defn}

\begin{lemma}\label{lemma:ncdrs}
  If $X$ is a smooth irreducible variety, then $\cN \in HM_{X}(X,w)$ has RS if it has singularities along a divisor with normal crossings.
\end{lemma}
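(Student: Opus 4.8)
The plan is to read the conclusion off directly from Proposition~\ref{prop:ncd}. By Definition~\ref{defn:rs}, what must be shown is that $Q_{X}(\cN)$, as an object of $D^{b}_{coh}(X)$, is isomorphic to a single coherent sheaf placed in cohomological degree $-n$. Since $X$ is smooth and the singularities of $\cN$ lie along a simple normal crossing divisor $D$, the hypotheses of Proposition~\ref{prop:ncd} are satisfied, and the formula for $Q_{X}(\cN)$ there gives
$$Q_{X}(\cN) \simeq \tilde{\cL}^{\geq 0}\big/\big(\tilde{\cL}^{\geq 0}\cap j_{\ast}F^{w+p(\cN)+1}\cL\big)[n].$$

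First I would observe that the object on the right is represented by an honest $\OO_{X}$-module rather than a genuine complex: Deligne's canonical extension $\tilde{\cL}^{\geq 0}$ is a locally free $\OO_{X}$-module of finite rank, and $\tilde{\cL}^{\geq 0}\cap j_{\ast}F^{w+p(\cN)+1}\cL$ is a coherent $\OO_{X}$-submodule of it (indeed a Hodge subbundle, by the nilpotent orbit theorem, though only coherence is needed here). Hence the quotient $\cG := \tilde{\cL}^{\geq 0}/(\tilde{\cL}^{\geq 0}\cap j_{\ast}F^{w+p(\cN)+1}\cL)$ is a coherent sheaf, and $Q_{X}(\cN)\simeq \cG[n]$.

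Then I would conclude: $\cH^{i}(Q_{X}(\cN))=0$ for $i\neq -n$ while $\cH^{-n}(Q_{X}(\cN))\cong \cG$, so $Q_{X}(\cN)\simeq \cH^{-n}(Q_{X}(\cN))[n]$, which is precisely the condition in Definition~\ref{defn:rs}. Therefore $\cN$ has RS.

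There is no real obstacle: the substance of the lemma is already contained in Proposition~\ref{prop:ncd}, and the only point requiring care is the bookkeeping that the quotient appearing in that formula is genuinely a sheaf concentrated in a single degree — which reduces to recalling that the Deligne extension $\tilde{\cL}^{\geq 0}$ and its Hodge filtration pieces are coherent (in fact locally free) $\OO_{X}$-modules, so that taking a quotient keeps us in the category of coherent sheaves.
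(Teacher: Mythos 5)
Your proposal is correct and is exactly the paper's argument: the authors simply cite Proposition~\ref{prop:ncd}, whose formula exhibits $Q_{X}(\cN)$ as a coherent sheaf (a quotient of the Deligne extension) placed in degree $-n$, which is the RS condition of Definition~\ref{defn:rs}. Your added remarks on coherence of the quotient are just the bookkeeping the paper leaves implicit.
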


\begin{proof}
 This follows from Proposition \ref{prop:ncd}.
\end{proof}

\begin{lemma}\label{lemma:SCM} $\cN \in HM_{X}(X,w)$ has RS if and only if $S_{X}(\cN)$ is Cohen-Macaulay of pure dimension $n$.
\end{lemma}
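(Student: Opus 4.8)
The plan is to reduce the statement to the classical homological characterization of Cohen--Macaulay modules, exploiting that $Q_X(\cN)$ is literally the Grothendieck dual of $S_X(\cN)$.

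First I would record two preliminary facts. Since $\cN$ is pure, Remark~\ref{rmk:polar} gives $Q_X(\cN)\simeq\cD(S_X(\cN))=\bR\cH om_X(S_X(\cN),\omega^{\bullet}_X)$; write $\F:=S_X(\cN)$. Since $\cN\in HM_X(X,w)$ is strictly supported on $X$, it restricts on a dense open $U\subseteq X$ to a variation of Hodge structure $\bH$, and there $\F|_U$ is a nonzero locally free sheaf (a Hodge bundle twisted by $\Omega^n_U$, by the smooth, empty-divisor case of Proposition~\ref{prop:ncd}); hence $\mathrm{Supp}\,\F=X$, which is irreducible of dimension $n$. In particular, by Definition~\ref{defn:rs}, what must be shown is that $\bR\cH om_X(\F,\omega^{\bullet}_X)$ is concentrated in cohomological degree $-n$ if and only if $\F$ is Cohen--Macaulay of pure dimension $n$.

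Next I would pass to the ambient embedding $i\colon X\hookrightarrow Y$ with $Y$ smooth of dimension $m$, where $\omega^{\bullet}_X\cong i^{!}\big(\Omega^m_Y[m]\big)$. The duality adjunction $i_*\bR\cH om_X(-,i^{!}(-))\cong\bR\cH om_Y(i_*-,-)$ then yields $i_*Q_X(\cN)\cong\bR\cH om_{\OO_Y}(i_*\F,\Omega^m_Y)[m]$, so the RS condition is equivalent to the vanishing of the sheaves $\mathcal{E}xt^{k}_{\OO_Y}(i_*\F,\Omega^m_Y)$ for all $k\neq m-n$. This is now a pointwise assertion over $y\in Y$: localizing and using $(\Omega^m_Y)_y\cong\OO_{Y,y}$, one asks that for the regular local ring $S=\OO_{Y,y}$ (of dimension $m_y=\dim S$) and the finitely generated module $M=(i_*\F)_y$ the groups $\mathrm{Ext}^{k}_S(M,S)$ vanish for $k\neq m-n$. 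Here I would invoke the standard facts that $\mathrm{Ext}^{k}_S(M,S)=0$ for $k<\mathrm{grade}(M)=\mathrm{ht}(\mathrm{Ann}_S M)=m_y-\dim M$ and for $k>\mathrm{pd}_S(M)=m_y-\depth(M)$ (the last equality being Auslander--Buchsbaum), with non-vanishing at both endpoints. When $y\notin X$ we have $M=0$ and there is nothing to check; when $y\in X$, the fact that $\mathrm{Supp}\,\F=X$ forces $M$ to have full support in $\Spec\OO_{X,y}$, so $\dim M=\dim\OO_{X,y}=m_y-(m-n)$ and the lowest non-vanishing $\mathrm{Ext}$ already lies in degree $m-n$; hence these groups are concentrated in degree $m-n$ precisely when $\depth(M)=\dim(M)$, i.e. precisely when $M$ is Cohen--Macaulay.

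Running this equivalence over all $y$ shows that $\cN$ has RS if and only if $S_X(\cN)_x$ is Cohen--Macaulay for every $x\in X$, and since $\mathrm{Supp}\,S_X(\cN)=X$ is irreducible of dimension $n$ this is exactly the assertion that $S_X(\cN)$ is Cohen--Macaulay of pure dimension $n$. I do not expect a genuine obstacle: the content is entirely the classical $\mathrm{Ext}$-characterization of Cohen--Macaulayness together with Grothendieck duality for the closed immersion $i$. The only care needed is bookkeeping --- verifying that $S_X(\cN)$ has full support $X$ (so that ``pure dimension $n$'' is the correct phrasing and the bottom $\mathrm{Ext}$ automatically sits in degree $m-n$), and tracking the shift in the duality so that degree $m-n$ on $Y$ matches degree $-n$ on $X$.
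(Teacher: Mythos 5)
Your proof is correct and follows essentially the same route as the paper: both reduce via Remark \ref{rmk:polar} and the closed embedding $i\colon X\hookrightarrow Y$ to the statement that $\bR\cH om_Y(i_*S_X(\cN),\omega^\bullet_Y)$ is concentrated in degree $-n$ iff $S_X(\cN)$ is Cohen--Macaulay of pure dimension $n$, which the paper cites from \cite[Chap.\ I, \S 3]{toroidal} and you instead verify directly by the grade/projective-dimension bounds on $\mathrm{Ext}^k_S(M,S)$ over the regular local rings $\OO_{Y,y}$. The extra observation that $\mathrm{Supp}\,S_X(\cN)=X$ is a harmless convenience (and in any case the concentration of the $\cE xt$ sheaves forces it in one direction, while the hypothesis of pure dimension $n$ supplies it in the other).
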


\begin{proof}

  Given the embedding $i : X \hookrightarrow Y$ with $Y$ smooth, by Remark \ref{rmk:polar} there is a quasi-isomorphism
$$ i_{\ast} Q_{X}(\cN) \simeq \bR \cH om_{Y}(i_{\ast}S_{X}(\cN), \omega^{\bullet}_{Y}).$$
We see that $Q_{X}(\cN) \simeq \cH^{-n}(Q_{X}(\cN))[n]$ if and only if 
$$\cE xt^{i}_{Y}(i_{\ast}S_{X}(\cN), \omega^{\bullet}_{Y}) = 0 \quad \text{unless $i =  -n$}.$$
Which is equivalent to  $S_{X}(\cN)$ being Cohen-Macaulay of pure dimension $n$ \cite[Chap. I, \S 3]{toroidal}. 
\end{proof}

\begin{cor}\label{locallyfree}
 Assume $X$ is smooth. If $\cN \in HM_{X}(X,w)$ has RS, then $S_{X}(\cN)$ and $Q_{X}(\cN)$ are (translates of) locally free sheaves.
\end{cor}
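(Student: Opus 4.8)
The plan is to reduce everything to Lemma \ref{lemma:SCM}, which already identifies the RS hypothesis with $S_{X}(\cN)$ being Cohen--Macaulay of pure dimension $n = \dim X$, and then to invoke the Auslander--Buchsbaum formula together with the formula $Q_{X}(\cN) = \cD(S_{X}(\cN))$ from Remark \ref{rmk:polar}.

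First I would note that since $X$ is smooth and $\cN \in HM_{X}(X,w)$, the object $S_{X}(\cN) = F_{p(\cN)}N$ is a genuine coherent $\OO_{X}$-module, being a piece of a good filtration on a $\cD_{X}$-module. By Lemma \ref{lemma:SCM}, the RS hypothesis says $S_{X}(\cN)$ is Cohen--Macaulay of pure dimension $n$; that is, for every $x \in X$ the stalk $S_{X}(\cN)_{x}$ has depth $n$ over the regular local ring $\OO_{X,x}$, which itself has dimension $n$. The Auslander--Buchsbaum formula then gives $\mathrm{pd}_{\OO_{X,x}} S_{X}(\cN)_{x} = n - \depth S_{X}(\cN)_{x} = 0$, so each stalk is free and $S_{X}(\cN)$ is locally free.

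For the statement about $Q_{X}(\cN)$, I would use $Q_{X}(\cN) = \cD(S_{X}(\cN)) = \bR\cH om_{X}(S_{X}(\cN), \omega^{\bullet}_{X})$ from Remark \ref{rmk:polar}, together with $\omega^{\bullet}_{X} \simeq \Omega^{n}_{X}[n]$ for $X$ smooth. Since $S_{X}(\cN)$ is locally free, the $\bR\cH om$ has no higher cohomology, so $Q_{X}(\cN) \simeq \cH om_{X}(S_{X}(\cN), \Omega^{n}_{X})[n]$, i.e. the locally free sheaf $S_{X}(\cN)^{\vee} \otimes \Omega^{n}_{X}$ placed in degree $-n$, consistent with Definition \ref{defn:rs}.

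There is no serious obstacle here: the content is entirely the passage ``maximal Cohen--Macaulay over a regular local ring $\Rightarrow$ free'', which is Auslander--Buchsbaum. The only points deserving a line of care are that $S_{X}(\cN)$ is a sheaf rather than a complex (automatic on smooth $X$) and bookkeeping of the degree shift in the duality computation.
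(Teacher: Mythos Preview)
Your proof is correct and follows essentially the same route as the paper: the paper simply invokes the well-known fact that a maximal Cohen--Macaulay module over a regular local ring is free, and you make this explicit via Auslander--Buchsbaum. Your additional paragraph handling $Q_{X}(\cN)$ via Remark \ref{rmk:polar} spells out what the paper leaves implicit, and is exactly the intended argument.
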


\begin{proof}
 Locally, $S_{X}(\cN)$ is a maximal Cohen-Macaulay module. It is well known that a maximal Cohen-Macaulay module on a regular local ring is free. 
\end{proof}

Next, we will explain a  characterization of rational singularities for Hodge modules which is closer to the original definition.
 Specifically, we will show we can define rational singularities for Hodge modules by a resolution of singularities. To do this, we will need Saito's main result from \cite{saito3}. 
Following Saito \cite{saito3}, we define
$$ q(\cN) := -n - p(\cN) \quad \text{and} \quad q'(\cN) := p(\cN) + w.$$

\begin{thm}\cite[Thm. 3.2]{saito3}\label{thm:saito3}
Let $f: X \rightarrow X'$ be a proper surjective morphism of irreducible varieties with $d =dim(X) - dim(X')$, and $\cM \in HM_{X}(X, w)$. If $\cM^{i}_{X'} \in HM_{X'}(X', w +i)$ is the direct factor of $H^{i}f_{+}\cM$ from the decomposition of strict support,  then we have the (noncanonical) isomorphisms in $D^{b}_{coh}(X')$:
$$ \bR f_{\ast} S_{X}(\cM) \simeq \displaystyle \bigoplus_{q(\cM^{i}_{X'}) = q(\cM) + d} S_{X'}(\cM_{X'}^{i})[-i],$$

$$\bR f_{\ast} Q_{X}(\cM) \simeq \displaystyle \bigoplus_{q'(\cM^{i}_{X'}) = q'(\cM) } Q_{X'}(\cM_{X'}^{i})[-i],$$

and canonical isomorphisms 
\begin{equation*}
 R^{i}f_{\ast}S_{X}(\cM) = S_{X'}(\cM_{X'}^{i}) \quad \text{for $q(\cM^{i}_{X'}) = q(\cM) + d$ }
 \end{equation*}
 
 \begin{equation*}
 ^{d}H^{i} \bR f_{\ast}Q_{X}(\cM) = Q_{X'}(\cM_{X'}^{i})\quad  \text{for $q'(\cM^{i}_{X'}) = q'(\cM),$ }
 \end{equation*} 
 
 where $^{d}H^{i}\bR f_{\ast}Q_{X}(\cM) = \cD(H^{-i}(\cD( \bR f_{\ast}Q_{X}(\cM)))).$

\end{thm}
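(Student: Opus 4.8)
The plan is to reduce everything to how the filtered de Rham functor interacts with proper pushforward, and then to isolate the two extremal steps of the Hodge filtration. Write $n=\dim X$ and $n'=\dim X'$, so $d=n-n'$. The first input is Saito's decomposition theorem: since $\cM$ is polarizable and $f$ is proper, $f_{+}\cM\simeq\bigoplus_{i}(H^{i}f_{+}\cM)[-i]$ in $D^{b}(HM(X'))$, each $H^{i}f_{+}\cM$ is pure of weight $w+i$, and its strict-support decomposition reads $H^{i}f_{+}\cM=\cM^{i}_{X'}\oplus\mathcal{P}^{i}$ with $\mathcal{P}^{i}$ a direct sum of pure Hodge modules strictly supported on irreducible subvarieties $Z\subsetneq X'$. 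The second input is the strictness of the Hodge filtration under $f_{+}$, which gives, for every $p\in\Z$, a non-canonical isomorphism
$$\bR f_{*}\,Gr^{F}_{p}DR(\cM)\;\simeq\;Gr^{F}_{p}DR(f_{+}\cM)\;\simeq\;\bigoplus_{i}Gr^{F}_{p}DR(\cM^{i}_{X'})[-i]\;\oplus\;\bigoplus_{i}Gr^{F}_{p}DR(\mathcal{P}^{i})[-i]$$
whose cohomology sheaf in each degree is canonical.

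For the first two isomorphisms I would take $p=p(\cM)$, so the left side is $\bR f_{*}S_{X}(\cM)$ (Definition \ref{SQdefn}). For any pure Hodge module $\mathcal{R}$ strictly supported on an irreducible $Z$ one has $Gr^{F}_{p}DR(\mathcal{R})=0$ for $p<p(\mathcal{R})$ by definition of $p(\mathcal{R})$, and $Gr^{F}_{p(\mathcal{R})}DR(\mathcal{R})=S_{Z}(\mathcal{R})$. Combining this with the upper bound of Proposition \ref{MinProp}, and with the inequality $q(\mathcal{R})\le q(\cM)+d$ valid for every strict-support factor $\mathcal{R}$ of $\bigoplus_{i}H^{i}f_{+}\cM$ — equivalently $p(\mathcal{R})\ge p(\cM)+(n'-\dim Z)$ — one checks that each $\mathcal{P}^{i}$-term vanishes in filtration degree $p(\cM)$ (its factors live in dimension $<n'$, so their $p$ exceeds $p(\cM)$), and that $Gr^{F}_{p(\cM)}DR(\cM^{i}_{X'})$ is nonzero, hence equal to $S_{X'}(\cM^{i}_{X'})$, exactly when $p(\cM^{i}_{X'})=p(\cM)$, i.e.\ when $q(\cM^{i}_{X'})=q(\cM)+d$. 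Taking the $i$-th cohomology sheaf of both sides then yields the canonical identity $R^{i}f_{*}S_{X}(\cM)=S_{X'}(\cM^{i}_{X'})$ for those $i$.

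For the second two isomorphisms I would dualize rather than rerun the argument. Apply the $S$-statement just proved to the dual Hodge module $\D(\cM)$, then apply the duality $\cD$ on $X'$. Using that $\cD$ commutes with $\bR f_{*}$ for proper $f$ (Grothendieck duality), that $\D$ is compatible with the whole construction — replacing each $\cM^{i}_{X'}$ by $\D(\cM^{-i}_{X'})$ — and that $Q_{X'}(\mathcal{R})=\cD(S_{X'}(\D(\mathcal{R})))$ and $Q_{X}(\cM)=\cD(S_{X}(\D(\cM)))$ by Definition \ref{SQdefn}, one recovers $\bR f_{*}Q_{X}(\cM)$ on the left and a direct sum of shifted copies of $Q_{X'}(\cM^{i}_{X'})$ on the right. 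The index set produced, $\{i:q(\D(\cM^{-i}_{X'}))=q(\D(\cM))+d\}$, rewrites (after the substitution $i\mapsto -i$ and using $q'(\cN)=p(\cN)+w$ together with Remark \ref{rmk:polar}) as $\{i:q'(\cM^{i}_{X'})=q'(\cM)\}$. The canonical statement for $Q$ then follows by applying $\cD$ term by term to the canonical $S$-statement for $\D(\cM)$ and using biduality, which is precisely what $^{d}H^{i}\bR f_{*}Q_{X}(\cM)=\cD(H^{-i}(\cD(\bR f_{*}Q_{X}(\cM))))$ computes.

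The main obstacle is the strictness of the Hodge filtration under $f_{+}$: this is one of the deepest theorems of Saito's theory and cannot be shortcut. Granting it, the remaining work is the bookkeeping in the $S$-step — establishing $q(\mathcal{R})\le q(\cM)+d$ for every strict-support factor of $\bigoplus_{i}H^{i}f_{+}\cM$, so that exactly the asserted summands survive — which is where the interplay between support dimension, weight and the Hodge filtration, encoded by $q$ and $q'$, does the work.
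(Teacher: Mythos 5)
The paper does not actually prove this statement: it is imported verbatim from Saito's \emph{On Koll\'ar's conjecture} \cite[Thm.~3.2]{saito3} and used as a black box, so there is no internal proof to compare against. That said, your outline does reproduce the architecture of Saito's own argument: the decomposition theorem for pure polarizable Hodge modules under proper pushforward, strictness of the filtered direct image giving $\bR f_{*}Gr^{F}_{p}DR(\cM)\simeq\bigoplus_{i}Gr^{F}_{p}DR(H^{i}f_{+}\cM)[-i]$, specialization to $p=p(\cM)$ for the $S$-statement, and Grothendieck duality together with $\D(\cM)\cong\cM(w)$ to convert it into the $Q$-statement. Your index bookkeeping for the $Q$-part (that the condition $q(\D(\cM^{-i}_{X'}))=q(\D(\cM))+d$ unwinds to $q'(\cM^{i}_{X'})=q'(\cM)$ after $i\mapsto -i$) is correct.

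There is, however, a genuine gap at exactly the point you flag and then defer. The soft argument — $Gr^{F}_{p}DR(\cM)=0$ for $p<p(\cM)$, hence $\bR f_{*}Gr^{F}_{p}DR(\cM)=0$, hence each direct summand $Gr^{F}_{p}DR(\mathcal{R})$ vanishes for $p<p(\cM)$ — only yields $p(\mathcal{R})\ge p(\cM)$ for every strict-support factor $\mathcal{R}$ of $\bigoplus_{i}H^{i}f_{+}\cM$. To conclude that the factors supported on $Z\subsetneq X'$ contribute nothing at the extremal level $p(\cM)$, you need the \emph{strict} inequality, i.e. $p(\mathcal{R})\ge p(\cM)+(n'-\dim Z)$ (equivalently $q(\mathcal{R})\le q(\cM)+d$ with $q$ computed from $\dim Z$). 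You assert this inequality but do not prove it, and it is not formal: it is precisely the substantive content of Saito's theorem, obtained from his local analysis of the Hodge filtration of direct images (reduction to the normal crossing case and the explicit description of $F_{p(\cM)}$ there). Without it, a factor $\mathcal{R}$ with support of dimension $<n'$ and $p(\mathcal{R})=p(\cM)$ would contribute a nonzero summand to $R^{i}f_{*}S_{X}(\cM)$, and the asserted canonical identification $R^{i}f_{*}S_{X}(\cM)=S_{X'}(\cM^{i}_{X'})$ would fail. So what you have is a faithful reduction of the theorem to its hardest ingredient, not a proof of it; given that the paper itself simply cites Saito here, the honest course is to do the same rather than present this outline as a proof.
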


\begin{set}\label{settingRes} Let $X$ be an irreducible variety. If  $\bH = ((\cL, F), L)$ is a variation of Hodge structure of weight $w -n$ on a smooth open Zariski dense  subset $U$ of $X$, then there exists a unique Hodge module $\cM = ((M,F), K) \in HM_{X}(X,w)$ such that $\cM|_{U} \cong \bH$.  If $\pi: \tX \rightarrow X$ is a resolution of singularities, then there exists a smooth open Zariski dense subset $V$ such that $\pi|_{\pi^{-1}(V)}:\pi^{-1}(V) \rightarrow V$ is an isomorphism. If we restrict the variation of Hodge structure $\bH$ to the open subset $U \cap V$, then there is a variation of Hodge structure $\tilde{\bH}$ on the open set $\pi^{-1}(V \cap U)$. Hence there also exists a unique Hodge module $\tcM = ((\tM, F), \tK)\in HM_{\tX}(\tX, w)$  such that $\tcM|_{\pi^{-1}(V \cap U)} \cong \tilde{\bH}$. 
\end{set}

\begin{lemma}\label{lemma:dirfact}
In the setting \ref{settingRes}, $\cM$ is the only direct factor of the Hodge module $H^{0}\pi_{+}\tcM$ with strict support on $X$.
\end{lemma}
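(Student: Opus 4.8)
The plan is to use the characterization of Hodge modules by generic variations of Hodge structure (Saito's equivalence, \cite[Thm. 3.21]{saito2}) together with the decomposition theorem for $\pi_+\tcM$. First I would recall that since $\pi$ is proper (indeed projective, as a resolution) and $\tcM$ is pure of weight $w$, Saito's decomposition theorem gives that $H^0\pi_+\tcM \in HM(X,w)$ is a polarizable pure Hodge module, hence decomposes into a direct sum $\bigoplus_{Z\subseteq X} (H^0\pi_+\tcM)_Z$ indexed by the strict supports $Z$. The claim is that the summand with strict support equal to all of $X$ is exactly $\cM$, and moreover it is the unique such summand (i.e. no other summand has strict support $X$ — but since the decomposition by strict support collects all simple constituents generically living on $Z$ into one factor, there is at most one factor with strict support $X$, so the real content is identifying it with $\cM$).

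The key step is a generic/restriction argument. Let $V\subseteq X$ be the smooth Zariski-open dense subset over which $\pi$ is an isomorphism, as in Setting \ref{settingRes}, and let $W = U\cap V$, which is still Zariski-open and dense. Over $W$, the map $\pi$ restricts to an isomorphism $\pi^{-1}(W)\xrightarrow{\sim} W$, so $(H^0\pi_+\tcM)|_W \cong \tcM|_{\pi^{-1}(W)} \cong \tilde\bH|_{\pi^{-1}(W)} \cong \bH|_W$ as variations of Hodge structure (using that higher direct images vanish over the locus where $\pi$ is an isomorphism, so $H^i\pi_+\tcM|_W = 0$ for $i\neq 0$). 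On the other hand, $\cM|_W \cong \bH|_W$ by construction. Now a summand $(H^0\pi_+\tcM)_Z$ with $Z\neq X$ is supported on a proper closed subset, hence restricts to $0$ on the dense open $W$; therefore $(H^0\pi_+\tcM)|_W$ equals the restriction of the strict-support-$X$ summand alone. Comparing, the strict-support-$X$ summand of $H^0\pi_+\tcM$ restricts on $W$ to $\bH|_W$, which is the same variation of Hodge structure as $\cM|_W$. By the uniqueness in Saito's equivalence $HM_X(X,w)\simeq \{\text{VHS of weight } w-n \text{ generically on } X\}$ (a Hodge module strictly supported on $X$ is determined by its restriction to any dense open subset of the smooth locus), the strict-support-$X$ summand must be isomorphic to $\cM$. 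I should be a little careful that $\bH|_W$ and $\bH$ determine the same Hodge module on $X$; this follows because restriction of VHS to a dense open is fully faithful onto its image under this equivalence, or directly because $j_{!*}$ for $j: W\hookrightarrow X$ factors appropriately.

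The main obstacle — really the only delicate point — is justifying that over the locus $W$ where $\pi$ is an isomorphism, all the higher cohomology sheaves $H^i\pi_+\tcM$ vanish and $H^0\pi_+\tcM$ restricts to $\tcM$ itself. This is standard (an isomorphism has trivial higher pushforwards, and $\pi_+$ is compatible with the constructible pushforward which over $W$ is just pullback along the inverse isomorphism), but it requires knowing that the formation of $H^i\pi_+$ commutes with restriction to the open set $\pi^{-1}(W)\to W$; this is part of the base-change formalism for mixed Hodge modules \cite{saito2}. Everything else is bookkeeping with the strict-support decomposition and the faithfulness of the VHS-to-Hodge-module correspondence, so I expect the proof to be short.
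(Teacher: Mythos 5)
Your argument is essentially the paper's proof: decompose $H^{0}\pi_{+}\tcM$ by strict support, identify its restriction to $W=U\cap V$ with $\bH|_{W}$ using that $\pi$ is an isomorphism there, and conclude that the strict-support-$X$ factor is the unique Hodge-module extension of $\bH|_{W}$, which coincides with $\cM$ because $\bH$ and $\bH|_{W}$ determine the same extension (the paper verifies this last point on the underlying perverse sheaves via $L\cong j_{!*}(L|_{V\cap U})$, which matches your $j_{!*}$ remark). One phrasing to tighten: a factor with strict support $Z\subsetneq X$ does not restrict to zero on the dense open $W$ merely because $Z$ is a proper closed subset (it could meet $W$); rather, its restriction to $W$ is a direct summand of $\bH|_{W}$ supported on the proper closed subset $Z\cap W$, and it vanishes because $\bH|_{W}$ has strict support $W$ and so admits no such nonzero summand.
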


\begin{proof}
By the condition of strict support, we have $H^{0}\pi_{+}\tcM = \bigoplus \cM_{Z}$, where $\cM_{Z} \in HM_{Z}(X, w)$. There is also an isomorphism $H^{0}\pi_{+}\tcM|_{V \cap U} \cong \bH|_{V \cap U}$. Therefore, if $\cM' \in HM_{X}(X,w)$ is the unique Hodge module that extends $\bH|_{V \cap U}$, then $\cM^{'}$ is the only direct factor of $H^{0}\pi_{+}\tcM$ with strict support on $X$. So, we need to show $\cM = \cM^{'}$. By uniqueness of $\cM$, it suffices to show $\cM'|_{U} \cong \bH$. By the Riemann-Hilbert correspondence, it suffices to show the underlying perverse sheaf of $\cM'|_{U}$  is $L$. If $j_{V \cap U}: V \cap U \hookrightarrow X$ is the natural inclusion, then the perverse sheaf of $\cM'$ is given by the intermediate extension $j_{(V\cap U)!\ast}L|_{V \cap U}$. If $j_{U}:U \hookrightarrow X$ and $j:V \cap U \hookrightarrow U$ are the natural maps, then by the properties of the intermediary extension, there are isomorphisms
$$ L \cong j_{!\ast}L|_{V \cap U} \cong j_{U}^{-1}(j_{(V\cap U)!\ast}L|_{V \cap U}).$$
See \cite[Remark 5.2.7]{dimca} for more details.

\end{proof}

\begin{prop}\label{prop:dirS}
In the setting \ref{settingRes}, there are quasi-isomorphisms
$$\bR \pi_{\ast}S_{\tX}(\tcM) \simeq
 \pi_{\ast}S_{\tX}(\tcM) \simeq S_{X}(\cM) \quad \text{and} \quad \bR \pi_{\ast}Q_{\tX}(\tcM) \simeq Q_{X}(\cM).$$
\end{prop}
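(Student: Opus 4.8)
The plan is to apply Saito's theorem \ref{thm:saito3} to the resolution $\pi : \tX \to X$ and the Hodge module $\tcM$, and then use Lemma \ref{lemma:dirfact} to identify the one relevant direct factor. Since $\pi$ is a resolution, $d = \dim \tX - \dim X = 0$, so the two decomposition formulas in Theorem \ref{thm:saito3} read
$$\bR \pi_{\ast} S_{\tX}(\tcM) \simeq \bigoplus_{q(\tcM^{i}_{X}) = q(\tcM)} S_{X}(\tcM^{i}_{X})[-i], \qquad \bR \pi_{\ast} Q_{\tX}(\tcM) \simeq \bigoplus_{q'(\tcM^{i}_{X}) = q'(\tcM)} Q_{X}(\tcM^{i}_{X})[-i],$$
where $\tcM^{i}_{X}$ is the strict-support direct factor of $H^{i}\pi_{+}\tcM$ lying in $HM_{X}(X, w+i)$. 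First I would argue that the only index contributing is $i = 0$ and the only strict support contributing is $X$ itself, with $\tcM^{0}_{X} = \cM$.

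For the identification of the $i=0$ term: by Lemma \ref{lemma:dirfact}, $\cM$ is the unique direct factor of $H^{0}\pi_{+}\tcM$ with strict support $X$, so among the summands of $H^0\pi_+\tcM$ the one with full support is exactly $\cM$, and since $\tcM$ and $\cM$ agree with the same variation of Hodge structure on a common open set, $p(\cM) = p(\tcM)$ and the weights match, giving $q(\cM) = q(\tcM)$ and $q'(\cM) = q'(\tcM)$; hence this summand does appear in both sums. The key remaining point is that no other summand survives. Here I would invoke that $S_{X'}(-)$ and $Q_{X'}(-)$ of a Hodge module strictly supported on a \emph{proper} subvariety $Z \subsetneq X$ contribute complexes supported on $Z$, which are killed by the restriction-to-$U$ considerations; more precisely, I would show that the cohomology sheaves of $\bR\pi_\ast S_{\tX}(\tcM)$ (resp.\ $\bR\pi_\ast Q_{\tX}(\tcM)$) restricted to the open set $V\cap U$ over which $\pi$ is an isomorphism recover $S(X,\bH)|_{V\cap U}$ (resp.\ $Q(X,\bH)|_{V\cap U}$), which is a single sheaf in degree $0$ (resp.\ degree $-n$), forcing the numerical conditions $q(\tcM^i_X) = q(\tcM)$ and $q'(\tcM^i_X) = q'(\tcM)$ to fail for $i \neq 0$ and for lower-dimensional support, because those summands would introduce extra cohomology in the wrong degrees over a dense open set. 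This gives $\bR\pi_\ast S_{\tX}(\tcM) \simeq S_X(\cM)$ and $\bR\pi_\ast Q_{\tX}(\tcM) \simeq Q_X(\cM)$. Finally, $\bR\pi_\ast S_{\tX}(\tcM) \simeq \pi_\ast S_{\tX}(\tcM)$ follows from the canonical isomorphism $R^i \pi_\ast S_{\tX}(\tcM) = S_X(\tcM^i_X)$ in Theorem \ref{thm:saito3}: the left side vanishes for $i \neq 0$ once we know only $i=0$ contributes, since $S_{\tX}(\tcM)$ is a genuine sheaf (a maximal CM sheaf on the smooth $\tX$ by Lemma \ref{lemma:ncdrs} and Lemma \ref{lemma:SCM}, after a further log resolution if needed so that $\tcM$ has normal crossing singularities).

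The main obstacle I anticipate is the bookkeeping needed to rule out the extra summands cleanly — that is, verifying that for every $i \neq 0$ and every proper $Z$, the numerical equalities $q(\tcM^i_Z) = q(\tcM) + d$ and $q'(\tcM^i_Z) = q'(\tcM)$ genuinely fail, rather than merely contributing terms supported away from $U$. The honest way to do this is the restriction-to-the-isomorphism-locus argument sketched above: over $V \cap U$ the map $\pi$ is an isomorphism, so $\bR\pi_\ast$ does nothing there, and $S_{\tX}(\tcM)$, $Q_{\tX}(\tcM)$ restrict to single sheaves; any surviving summand $S_X(\tcM^i_X)[-i]$ or $Q_X(\tcM^i_X)[-i]$ with $i \neq 0$ would then have to restrict to zero on $V\cap U$, but a nonzero Hodge module strictly supported on $X$ cannot restrict to zero on a dense open subset, and one with proper support restricts to zero on $U$ but then does not appear in the numerical selection. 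One should double-check, using Proposition \ref{prop:ncd} and the fact that $q'(\cM)$ is read off from $\bH$, that the equality of $q$- and $q'$-invariants between $\cM$ and $\tcM$ is exactly what forces the $i=0$, full-support term to be selected and nothing else.
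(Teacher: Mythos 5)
Your proposal is correct and follows essentially the same route as the paper: apply Theorem \ref{thm:saito3} with $d=0$ and use Lemma \ref{lemma:dirfact} to identify the sole surviving summand as $\cM$, the other factors being excluded because $H^{i}\pi_{+}\tcM$ is supported on a proper closed subset of $X$ for $i\neq 0$. The only cosmetic difference is that the paper deduces the $Q$-statement from the $S$-statement by duality (Remark \ref{rmk:polar}) rather than invoking the $Q$-half of Saito's theorem directly, as you do.
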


\begin{proof}
From Theorem \ref{thm:saito3},
$$ \bR \pi_{\ast} S_{\tX}(\tcM) \simeq \displaystyle \bigoplus_{q(H^{i}\pi_{+}(\tcM)) = q(\tcM) } S_{X}(H^{i}\pi_{+}(\tcM))[-i].$$
But, for $i \neq 0$, the support of $H^{i}\pi_{+}(\tcM)$ is contained in a proper closed subset of $X$. Therefore, by combining Theorem \ref{thm:saito3} and Lemma \ref{lemma:dirfact}, 
$$ \bR \pi_{\ast} S_{\tX}(\tcM) \simeq \pi_{\ast}S_{\tX}(\tcM) \cong S_{X}(H^{0}\pi_{+}(\tcM)) \cong S_{X}(\cM).$$
The second quasi-isomorphism follows from duality (Remark \ref{rmk:polar}).
\end{proof}

\begin{cor}\label{cor:biratRS}
Given a Hodge module $\cM \in HM_{X}(X,w)$, choose a smooth open set $U\subset X$, such that
$\cM|_U$ is a variation of Hodge structure. Fix a resolution of singularities $\pi:\tilde X\to X$, such that
$\pi^{-1}(X-U)$ is a divisor with normal crossings. Then,
\begin{enumerate}
\item $\cM$
has RS if and only if  $R^{i} \pi_{\ast} Q_{\tX}(\tcM) = 0$  for $i > -n$; 
\item $S_{X}(\cM)$ is torsion free. 
 
\end{enumerate}
\end{cor}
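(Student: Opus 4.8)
The plan is to deduce everything from Proposition~\ref{prop:dirS} together with the local description of $Q$ for normal crossing singularities given in Proposition~\ref{prop:ncd}. By the hypothesis on $\pi$, the Hodge module $\tcM$ on $\tX$ has singularities along the normal crossing divisor $\pi^{-1}(X-U)$, so Lemma~\ref{lemma:ncdrs} says $\tcM$ has RS; equivalently $Q_{\tX}(\tcM)\simeq \cH^{-n}(Q_{\tX}(\tcM))[n]$ is a single sheaf in degree $-n$. Write $\mathcal{G} := \cH^{-n}(Q_{\tX}(\tcM))$ for this sheaf on $\tX$. By Proposition~\ref{prop:dirS}, $\bR\pi_\ast Q_{\tX}(\tcM)\simeq Q_X(\cM)$, and since $Q_{\tX}(\tcM)=\mathcal{G}[n]$ this reads $\bR\pi_\ast\mathcal{G}[n]\simeq Q_X(\cM)$, i.e.\ $\cH^{-n+j}(Q_X(\cM)) = R^{j}\pi_\ast\mathcal{G}$ for all $j\ge 0$ (and the terms with $j<0$ vanish since $\pi_\ast$ is left exact after the shift).

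For part (1): $\cM$ has RS means precisely that $Q_X(\cM)$ is concentrated in degree $-n$, which by the identification above is equivalent to $R^{j}\pi_\ast\mathcal{G} = 0$ for all $j>0$, i.e.\ to $R^{i}\pi_\ast Q_{\tX}(\tcM) = 0$ for all $i > -n$ (after reindexing $i = -n+j$). This is exactly the stated equivalence, so part (1) is immediate once Proposition~\ref{prop:dirS} and Lemma~\ref{lemma:ncdrs} are in hand.

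For part (2): the claim is that $S_X(\cM)$ is torsion free. By Proposition~\ref{prop:dirS} we have $S_X(\cM)\cong \pi_\ast S_{\tX}(\tcM)$, so it suffices to know that $S_{\tX}(\tcM)$ is torsion free and that a pushforward of a torsion free sheaf along $\pi$ is torsion free. The latter is standard: if $s$ is a local section of $\pi_\ast S_{\tX}(\tcM)$ killed by a nonzero $f\in\OO_X$, then $\pi^\ast f \cdot s = 0$ in $S_{\tX}(\tcM)$ over $\pi^{-1}$ of the relevant open set, and $\pi^\ast f$ is nonzero (as $\pi$ is dominant with $\tX$ irreducible), forcing $s=0$ by torsion freeness upstairs. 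For torsion freeness of $S_{\tX}(\tcM)$ on the smooth variety $\tX$: by Proposition~\ref{prop:ncd}, $S_{\tX}(\tcM) = \Omega^n_{\tX}\otimes(\tilde{\cL}^{>-1}\cap j_\ast F^{\bullet}\cL)$, which is a subsheaf of $\Omega^n_{\tX}\otimes j_\ast\cL$; since $\tX$ is smooth (hence integral on each component) and $j_\ast\cL$ is the pushforward from the dense open $V\cap U$ of a locally free sheaf, it is torsion free, and a subsheaf of a torsion free sheaf is torsion free.

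The only genuine subtlety is the bookkeeping in identifying $\bR\pi_\ast Q_{\tX}(\tcM)$ degreewise with the cohomology of $Q_X(\cM)$, and in matching the resolution $\pi$ of the corollary (required to have normal crossing exceptional behaviour) with the setup of Setting~\ref{settingRes} and Proposition~\ref{prop:dirS} --- one should note $\cM$ is the unique extension of $\cM|_U$ and that this $U$ may be shrunk to the $U\cap V$ of that setting without changing $\cM$ or $S_X(\cM)$, $Q_X(\cM)$. I expect this compatibility of notation, rather than any real geometric input, to be the main thing to get right; the mathematical content is entirely carried by the earlier results.
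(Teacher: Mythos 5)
Your proposal is correct and follows essentially the same route as the paper: part (1) is the combination of Lemma~\ref{lemma:ncdrs} (so that $Q_{\tX}(\tcM)$ is a sheaf in degree $-n$) with Proposition~\ref{prop:dirS}, and part (2) pushes torsion-freeness of $S_{\tX}(\tcM)$ forward along the affine, dominant map $\pi$. The only (harmless) divergence is in how you see that $S_{\tX}(\tcM)$ is torsion free: you read it off the explicit normal-crossing formula of Proposition~\ref{prop:ncd} as a subsheaf of $\Omega^n_{\tX}\otimes j_*\cL$, whereas the paper invokes Corollary~\ref{locallyfree} to get local freeness outright.
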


\begin{proof}
The first item follows from the definition of RS for Hodge modules and Proposition \ref{prop:dirS}. For the second item, $S_{\tX}(\cM)$ is torsion free because it is locally free (Corollary \ref{locallyfree}). Hence $\pi_{*}S_{\tX}(\tcM) \cong S_{X}(\cM)$ is torsion free.
\end{proof}

\begin{cor}\label{cor:rs}
Let $U$ be the nonsingular set of $X$ and $IC^{H}_{X} \in HM_{X}(X,n)$ the unique Hodge module that extends the constant variation of Hodge structure on $U$. The variety $X$ has rational singularities if and only if $X$ is normal and $IC^{H}_{X}$ has rational singularities.
\end{cor}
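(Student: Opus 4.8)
The plan is to deduce this directly from Corollary~\ref{cor:biratRS}(1) applied to $\cM = IC^{H}_{X}$, after unwinding what $Q_{\tX}(\tcM)$ is in this case. First I would fix, using Hironaka, a resolution $\pi : \tX \to X$ which is an isomorphism over a dense open subset of the smooth locus $U$ and such that $\pi^{-1}(X \setminus U)$ is a simple normal crossings divisor; this is exactly the type of resolution allowed in Corollary~\ref{cor:biratRS}, and the conditions appearing there are independent of the choice. In the notation of Setting~\ref{settingRes}, the variation of Hodge structure underlying $IC^{H}_{X}$ is the constant one on $U$ (of weight $w - n = 0$, so $w = n$), hence its pullback $\tilde{\bH}$ is again the constant variation, and since $\tX$ is smooth the unique Hodge module on $\tX$ extending $\tilde{\bH}$ is the trivial Hodge module $\tcM = \Q^{H}_{\tX}[n]$.

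Next I would use the computation of $Q$ for the trivial Hodge module on a smooth variety (the Example following Lemma~\ref{Q-lemma}), which gives $Q_{\tX}(\tcM) = \OO_{\tX}[n]$. Therefore $\bR\pi_{\ast} Q_{\tX}(\tcM) \simeq (\bR\pi_{\ast}\OO_{\tX})[n]$, so for $i > -n$ one has $R^{i}\pi_{\ast} Q_{\tX}(\tcM) = R^{i+n}\pi_{\ast}\OO_{\tX}$. Feeding this into Corollary~\ref{cor:biratRS}(1) shows that $IC^{H}_{X}$ has RS if and only if $R^{j}\pi_{\ast}\OO_{\tX} = 0$ for all $j > 0$.

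Finally I would match this with the usual notion of rational singularities: $X$ has rational singularities exactly when $X$ is normal and $R^{j}\pi_{\ast}\OO_{\tX} = 0$ for $j > 0$ (normality forcing $\pi_{\ast}\OO_{\tX} = \OO_{X}$), a condition independent of the resolution. Combining the two previous paragraphs, ``$X$ is normal and $IC^{H}_{X}$ has RS'' is equivalent to ``$X$ is normal and $R^{j}\pi_{\ast}\OO_{\tX} = 0$ for $j>0$,'' i.e. to ``$X$ has rational singularities.'' When $X$ fails to be normal both statements are false, so there is nothing more to prove.

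Given the machinery of Section~1, I do not expect a serious obstacle: the only steps needing genuine care are the identification $\tcM = \Q^{H}_{\tX}[n]$ (that the Deligne--Saito extension of the constant variation on a smooth variety is the trivial Hodge module) and keeping track of the shift $[n]$ so that the vanishing range $i > -n$ in Corollary~\ref{cor:biratRS} translates correctly to $j = i+n > 0$ for $R^{j}\pi_{\ast}\OO_{\tX}$.
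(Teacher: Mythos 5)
Your proposal is correct and follows essentially the same route as the paper: both identify $\tcM=\Q^{H}_{\tX}[n]$, use $Q_{\tX}(\Q^{H}_{\tX}[n])=\OO_{\tX}[n]$ together with $\bR\pi_{*}Q_{\tX}(\tcM)\simeq Q_{X}(IC^{H}_{X})$ (Proposition \ref{prop:dirS}, of which Corollary \ref{cor:biratRS}(1) is the direct consequence you invoke), and then translate the resulting vanishing $R^{j}\pi_{*}\OO_{\tX}=0$ for $j>0$ into the classical definition of rational singularities.
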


\begin{proof} 
Let $\pi: \tX \rightarrow X$ be a resolution of singularities. The Hodge module $IC^{H}_{X}$ is a direct factor of $H^{0}\pi_{+}\Q^{H}_{\tX}[n]$, and
$$\bR\pi_{\ast}\cO_{\tX}[n] = \bR \pi_{\ast}Q_{\tX}(\Q^{H}_{\tX}[n]) \simeq Q_{X}(IC^{H}_{X}).$$
Hence $IC^{H}_{X}$ has RS if and only if the higher direct images of $\cO_{\tX}[n]$ vanish. So, $X$ has rational singularities if and only if $X$ is normal and $IC^{H}_{X}$ has RS.
\end{proof}

Using the definition of rational singularities for Hodge modules with Theorem \ref{thm:saito3}, we also have a theorem analogous to Koll\'ar's result \cite[Prop. 3.12]{kollar2}. 

\begin{thm}
Let $f: X \rightarrow X'$ be a surjective projective map between irreducible algebraic varieties of dimension $n$ and $m$, respectively, and let $d = n -m$. If  $\cN \in HM_{X}(X,w)$, then the following are equivalent:
\begin{enumerate}
\item $R^{i}f_{\ast}Q_{X}(\cN)$ is torsion free for all $i$. \\

\item $\bR f_{\ast} Q_{X}(\cN) \simeq \displaystyle \bigoplus_{i} R^{i}f_{\ast}Q_{X}(\cN)[-i]$. \\

\item If $\cN^{j}_{X'} \in HM_{X'}(X', w +j)$ is the largest direct factor of $H^{j}f_{+}\cN$ with strict support $X'$, then $\cN^{j}_{X'}$ has RS. 
\end{enumerate}

\end{thm}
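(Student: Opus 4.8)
The plan is to deduce this theorem directly from Theorem \ref{thm:saito3} (Saito's decomposition for $Q_X$), together with the characterization of RS via Cohen-Macaulayness and the standard fact that a complex in $D^b_{coh}$ splits as the sum of its shifted cohomology sheaves precisely when those sheaves are torsion free (or, more precisely, when the complex is "pure" in the appropriate $t$-structure). The key observation is that in Theorem \ref{thm:saito3}, the summand $Q_{X'}(\cN^i_{X'})[-i]$ appears in $\bR f_\ast Q_X(\cN)$ exactly when $q'(\cN^i_{X'}) = q'(\cN)$, and the associated $^dH^i \bR f_\ast Q_X(\cN) = Q_{X'}(\cN^i_{X'})$ is the "dual perverse cohomology." So the heart of the matter is to relate torsion-freeness of $R^i f_\ast Q_X(\cN)$ to the RS condition on the strict-support factors $\cN^j_{X'}$.

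First I would set up notation: write $\cN^j_{X'}$ for the largest direct factor of $H^j f_+ \cN$ with strict support on $X'$, and recall from Theorem \ref{thm:saito3} that only those $j$ with $q'(\cN^j_{X'}) = q'(\cN)$ contribute to $\bR f_\ast Q_X(\cN)$, and for such $j$ we have the canonical identification $^dH^j \bR f_\ast Q_X(\cN) = Q_{X'}(\cN^j_{X'})$. The implication (2) $\Rightarrow$ (1) is essentially formal once one knows each $R^i f_\ast Q_X(\cN)$ is, via duality $\cD$ on $X'$, controlled by $Q_{X'}(\cN^i_{X'})$; torsion-freeness will follow because $Q_{X'}$ of a strict-support Hodge module is torsion free by Corollary \ref{cor:biratRS}(2) applied to the dual (note $Q_{X'}(\cN^i_{X'}) = \cD(S_{X'}(\D(\cN^i_{X'})))$ and $S$ of a strict-support module is torsion free). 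For (1) $\Rightarrow$ (2), I would use the fact that a complex in $D^b_{coh}(X')$ all of whose cohomology sheaves are torsion free (equivalently, whose dual is concentrated appropriately) is non-canonically the direct sum of its shifted cohomologies — this is a Deligne-type degeneration criterion, and it matches the form of the decomposition already provided by Theorem \ref{thm:saito3}. Then (2) $\Leftrightarrow$ (3): given the splitting $\bR f_\ast Q_X(\cN) \simeq \bigoplus_i R^i f_\ast Q_X(\cN)[-i]$, comparing with the decomposition of Theorem \ref{thm:saito3} identifies $R^i f_\ast Q_X(\cN)$ with $Q_{X'}(\cN^i_{X'})$ (a genuine sheaf, up to the shift by $m$), which by Definition \ref{defn:rs} and Lemma \ref{lemma:SCM} is exactly the statement that $\cN^i_{X'}$ has RS; conversely if all $\cN^j_{X'}$ have RS then each $Q_{X'}(\cN^j_{X'})$ is a shifted sheaf and Theorem \ref{thm:saito3} gives the splitting outright.

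The main obstacle I anticipate is bookkeeping with the dual $t$-structure and the shifts: Theorem \ref{thm:saito3} phrases the canonical identification in terms of $^dH^i \bR f_\ast Q_X(\cN) = \cD(H^{-i}(\cD(\bR f_\ast Q_X(\cN))))$ rather than ordinary cohomology sheaves $R^i f_\ast$, so I must carefully check that on $X'$ the complex $Q_{X'}(\cN^i_{X'})$ being a single sheaf in degree $-m$ translates correctly between the two conventions, and that "torsion free" is the right condition (as opposed to a stronger purity/depth condition). Concretely, the argument hinges on the lemma that for $F^\bullet \in D^b_{coh}(X')$, one has $F^\bullet \simeq \bigoplus \cH^i(F^\bullet)[-i]$ iff each $\cH^i(F^\bullet)$ satisfies the condition that $\cD(\cH^i F^\bullet)$ is again a sheaf (up to shift), which for the $Q$'s coming from Hodge modules is the RS condition — this is where I expect to spend the most care, possibly invoking the Cohen-Macaulay criterion from \cite{toroidal} used in Lemma \ref{lemma:SCM}. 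The rest is a matter of matching indices between $q'$, $w$, $\dim X$, $\dim X'$, and $d$.
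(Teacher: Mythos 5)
Your overall toolkit is the right one (Theorem \ref{thm:saito3}, duality, and the Cohen--Macaulay characterization of RS from Lemma \ref{lemma:SCM}), but the proposal has a genuine gap at its central step. The claim that a complex in $D^{b}_{coh}(X')$ whose cohomology sheaves are all torsion free splits as the direct sum of its shifted cohomologies is false: already for a two-term complex the obstruction to splitting lives in $\cE xt^{2}(\cH^{1},\cH^{0})$, which need not vanish even when both cohomology sheaves are locally free. So your route for $(1)\Rightarrow(2)$ does not work as stated; the splitting in this theorem is not a formal consequence of torsion-freeness but comes from Saito's decomposition, and the only way to invoke it is to first prove $(1)\Rightarrow(3)$. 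Relatedly, you cannot identify $R^{i}f_{\ast}Q_{X}(\cN)$ with $Q_{X'}(\cN^{i}_{X'})$ before knowing RS: Theorem \ref{thm:saito3} only identifies the dual cohomology $^{d}H^{i}\bR f_{\ast}Q_{X}(\cN)$ with $Q_{X'}(\cN^{i}_{X'})$, and the latter are complexes a priori, so the assertion in your $(2)\Rightarrow(1)$ step that ``$Q_{X'}$ of a strict-support module is torsion free'' is neither available nor meaningful as stated; what is torsion free is $S_{X'}(\cN^{j}_{X'})$, by Corollary \ref{cor:biratRS}(2).

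The missing engine, which the paper uses for both $(1)\Rightarrow(3)$ and $(2)\Rightarrow(1)$, is a support-dimension estimate combined with duality against the unconditionally split complex $\bR f_{\ast}S_{X}(\cN)\simeq\bigoplus_{j}S_{X'}(\cN^{j}_{X'})[-j]$. Dualizing gives $R^{i}f_{\ast}Q_{X}(\cN)\cong\bigoplus_{j}\cE xt^{i+j}_{X'}(S_{X'}(\cN^{j}_{X'}),\omega^{\bullet}_{X'})$, and the key point is that $\cE xt^{k}_{X'}(\cF,\omega^{\bullet}_{X'})$ has support of dimension strictly less than $m$ whenever $k>-m$. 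Since $X'$ is irreducible, torsion-freeness of the left-hand side forces every summand with $i+j>-m$ to vanish, which is exactly the statement that each $S_{X'}(\cN^{j}_{X'})$ is Cohen--Macaulay of pure dimension $m$, i.e.\ that $\cN^{j}_{X'}$ has RS. The same estimate, run on $S_{X'}(\cN^{j}_{X'})\cong\bigoplus_{i}\cE xt^{i+j}_{X'}(R^{i}f_{\ast}Q_{X}(\cN),\omega^{\bullet}_{X'})$ and using torsion-freeness of $S_{X'}(\cN^{j}_{X'})$, gives $(2)\Rightarrow(1)$. Without this step your cycle of implications does not close; I would also drop the direct $(2)\Rightarrow(3)$ comparison, since matching $\bigoplus_{i}R^{i}f_{\ast}Q_{X}(\cN)[-i]$ against $\bigoplus_{j}Q_{X'}(\cN^{j}_{X'})[-j]$ summand by summand requires an argument you have not supplied; the clean order of implications is $(1)\Rightarrow(3)\Rightarrow(2)\Rightarrow(1)$.
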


\begin{proof}
$(1) \Rightarrow (3):$ By duality, and Theorem \ref{thm:saito3}, we have the quasi-isomorphisms
$$ \bR f_{\ast} Q_{X}(\cN) \simeq \bR \cH om_{X'}(\bR f_{\ast}S_{X}(\cN), \omega^{\bullet}_{X'}) \simeq \displaystyle \bigoplus_{0 \leq j \leq d} \bR \cH om_{X'}(R^{j}f_{\ast}S_{X}(\cN)[-j], \omega^{\bullet}_{X'}) $$ 
$$\simeq \displaystyle \bigoplus_{0 \leq j \leq d} \bR \cH om_{X'}(S_{X'}(\cN^{j}_{X'})[-j], \omega^{\bullet}_{X'}).$$
To show $\cN^{j}_{X'}$ has RS, it suffices to show $S_{X'}(\cN^{j}_{X'})$ is Cohen-Macaulay of pure dimension $m$ by Lemma \ref{lemma:SCM}. 
From the above formula, we obtain
$$ R^i f_{\ast} Q_{X}(\cN) \simeq \displaystyle \bigoplus_{0 \leq j \leq d} \cE xt^{i+j}_{X'}(S_{X'}(\cN^{j}_{X'}), \omega^{\bullet}_{X'}).$$
By \cite[\href{https://stacks.math.columbia.edu/tag/0ECM}{Tag 0ECM}]{stacks-project} (or by locally embedding  $X'$ into a smooth
variety and applying \cite[Lemma 6.5]{Hart2}), we see that the dimension of support of  $\cE xt^{i+j}_{X'}(S_{X'}(\cN^{j}_{X'}), \omega^{\bullet}_{X'})$ is less than $m$ if $i+j>-m$. Hence if $R^{i}f_{\ast}Q_{X}(\cN)$ is torsion free for every $i$, then 
$$\cE xt^{i+j}_{X'}S_{X'}(\cN^{j}_{X'}), \omega^{\bullet}_{X'})= 0 \quad \text{unless $i +j = -m$}.$$ 
This is equivalent to $S_{X'}(\cN^{j}_{X'})_{p}$ being Cohen-Macaulay of pure dimension $m$.\\ 

$(3) \Rightarrow (2):$ If $\cN^{j}_{X'}$ has RS for every $j$, then 
$$ \bR f_{\ast} Q_{X}(\cN) \simeq \displaystyle \bigoplus_{0 \leq j \leq d} \bR \cH om_{X'}(S_{X'}(\cN^{j}_{X'})[-j], \omega^{\bullet}_{X'}) \simeq \displaystyle \bigoplus_{0 \leq j \leq d} \bR \cH om_{X'}(S_{X'}(\cN^{j}_{X'}), \omega^{\bullet}_{X'})[j]$$
$$\simeq \displaystyle \bigoplus_{0 \leq j \leq d} Q_{X'}(\cN^{j}_{X'})[j] \simeq \displaystyle \bigoplus_{0 \leq j \leq d} H^{-m}(Q_{X'}(\cN^{j}_{X'}))[j + m].$$
Therefore, $R^{-m-j}f_{\ast}Q_{X}(\cN) \cong  H^{-m}(Q_{X'}(\cN^{j}_{X'}))$ and
$$\bR f_{\ast} Q_{X}(\cN) \simeq \bigoplus_{0 \leq j \leq d} H^{-m}(Q_{X'}(\cN^{j}_{X'}))[j + m] \simeq \bigoplus_{0 \leq j \leq d} R^{-m-j}f_{\ast}Q_{X}(\cN) [m +j].$$ \\

$(2) \Rightarrow (1)$: By duality, and Theorem  \ref{thm:saito3}, we have the quasi-isomorphisms
$$ \bigoplus_{0 \leq j \leq d}S_{X'}(\cN^{j}_{X'})[-j] \simeq \bR f_{\ast} S_{X}(\cN) \simeq \cD(\bR f_{\ast}Q_{X}(\cN)) \simeq \bigoplus_{i} \cD(R^{i}f_{\ast}Q_{X}(\cN)[-i]).$$
When we take the $j^{th}$-cohomology on each side, we have the isomorphism
$$S_{X'}(\cN^{j}_{X'}) \cong \bigoplus_{i} \cE xt^{i +j}_{X'}(R^{i}f_{\ast}Q_{X}(\cN), \omega^{\bullet}_{X'}).$$
Since $S_{X'}(\cN^{j}_{X'})$ is torsion free, we may use the same argument from $(1) \Rightarrow (3)$ to prove
$$\cE xt^{i+j}_{X'}(R^{i}f_{\ast}Q_{X}(\cN), \omega^{\bullet}_{X'}) = 0 \quad \text{unless $i +j = -m$}.$$ 
Which implies $R^{i}f_{\ast}Q_{X}(\cN)$ is Cohen-Macaulay of pure dimension $m$, and therefore torsion-free.  

\end{proof}

\begin{rmk}
In the proof of $(3) \Rightarrow (2)$ it was shown there is a quasi-isomorphism
$$\bR f_{\ast} Q_{X}(\cN) \simeq \displaystyle \bigoplus_{0 \leq j \leq d} Q_{X'}(\cN^{j}_{X'})[j]. $$
But, in the statement of Theorem \ref{thm:saito3}, we have 
$$\bR f_{\ast} Q_{X}(\cN) \simeq \displaystyle \bigoplus_{q'(\cN^{j}_{X'}) = q'(\cN) } Q_{X'}(\cN_{X'}^{j})[-j].$$
It can be shown that $q'(\cN^{j}_{X'}) = q'(\cN)$ whenever $-d \leq j \leq 0$. Therefore,
$$\bR f_{\ast} Q_{X}(\cN) \simeq \displaystyle \bigoplus_{0 \leq j \leq d } Q_{X'}(\cN_{X'}^{-j})[j].$$
So, these two statements appear to be contradictory to each other. However, $f:X \rightarrow X'$ is a projective morphism and there is an isomorpism
$$ \ell^{j}: H^{-j}f_{+}\cN \rightarrow H^{j}f_{+}\cN (j),$$
where $\ell$ is the first Chern class of an $f$-ample line bundle \cite{saito1}. Hence there is an isomorphism $\cN^{-j}_{X'} \cong \cN^{j}_{X'}(j)$. Which then induces a quasi-isomorphism
$$Q_{X'}(\cN_{X'}^{j}) \simeq Q_{X'}(\cN_{X'}^{-j}).$$
So the two statements above agree.

\end{rmk}





\begin{cor}
 Let $f: X \rightarrow X'$ be a surjective projective map between irreducible algebraic varieties. Suppose that $X$ is smooth of dimension $n$ and  $\omega_X= f^*L$, where $L$ is a line bundle on $X'$.
 If  $\cN^{j}_{X'}$ is the largest direct factor of $H^{j}f_{+}\Q_X^H$ with strict support $X'$, then $\cN^{j}_{X'}$ has RS for all $j$.
\end{cor}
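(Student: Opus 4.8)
The plan is to deduce this corollary from the previous theorem, since the hypothesis $\omega_X = f^*L$ is exactly what is needed to make condition (1) of that theorem hold for $\cN = \Q_X^H$ (suitably shifted to land in $HM_X(X,n)$; I will apply the theorem to $\Q_X^H[n]$). So the main task is to verify: $R^i f_* Q_X(\Q_X^H[n])$ is torsion free for all $i$. Since $X$ is smooth, $Q_X(\Q_X^H[n]) = \cO_X[n]$, so $R^i f_* Q_X(\Q_X^H[n]) = R^{i+n} f_* \cO_X$, and the claim reduces to the torsion-freeness of $R^k f_* \cO_X$ for all $k$.

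First I would invoke the standard fact (Koll\'ar's torsion-freeness theorem, \cite{kollar1}) that for a surjective projective morphism $f: X \to X'$ from a smooth variety, $R^k f_* \omega_X$ is torsion free for all $k$. Then I would use the projection formula together with the hypothesis $\omega_X = f^* L$: since $L$ is a line bundle on $X'$, we get $R^k f_* \cO_X = R^k f_*(\omega_X \otimes f^* L^{-1}) \cong (R^k f_* \omega_X) \otimes L^{-1}$, and tensoring a torsion-free sheaf by a line bundle keeps it torsion free. This establishes condition (1) of the preceding theorem for $\cN = \Q_X^H[n]$, and condition (3) of that theorem then gives exactly the statement: each largest direct factor $\cN^j_{X'}$ of $H^j f_+ \Q_X^H[n]$ with strict support $X'$ has RS. A small bookkeeping point is to match the indexing: $H^j f_+(\Q_X^H[n])$ versus $H^j f_+ \Q_X^H$ differ only by a shift in $j$, which does not affect the conclusion ``for all $j$''.

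The only real subtlety — and the step I would be most careful about — is making sure I am entitled to apply Koll\'ar's torsion-freeness to $R^k f_* \omega_X$ in this generality (surjective projective, $X$ smooth, $X'$ possibly singular and of lower dimension); this is classical but should be cited precisely, e.g. via \cite{kollar1} or its extensions. Alternatively, and perhaps more in the spirit of the paper, I could avoid citing Koll\'ar directly and instead argue intrinsically: apply Theorem \ref{thm:saito3} to $\cM = \Q_X^H[n]$ to get $\bR f_* \cO_X[n] \simeq \bigoplus Q_{X'}(\cN^j_{X'})[-j]$ over the relevant range, and then use condition (2)$\Leftrightarrow$(1) of the preceding theorem — but this is essentially circular unless the torsion-freeness is supplied from outside, so I would keep the Koll\'ar input. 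I expect no genuine obstacle here; the corollary is a direct application once the projection-formula identity $R^k f_*\cO_X \cong (R^k f_* \omega_X)\otimes L^{-1}$ is in hand.
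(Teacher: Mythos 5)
Your proposal is correct and follows exactly the paper's argument: the authors likewise verify condition (1) of the preceding theorem by writing $R^{i}f_{*}Q_X(\Q_X^H[n]) = R^{i+n}f_*\cO_X \cong R^{i+n}f_*\omega_X \otimes L^{-1}$ via the projection formula and $\omega_X = f^*L$, and then invoke Koll\'ar's torsion-freeness theorem \cite[Theorem 2.1]{kollar1}. No further commentary is needed.
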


\begin{proof}
 By Koll\'ar's theorem \cite[Theorem 2.1]{kollar1} and the projection formula, $R^{i}f_{\ast}Q(\Q_X^H[n]) =R^{i}f_{*}\cO_{X}[n]= R^{i+n}f_*\omega_X \otimes L^{-1}$ is torsion free.
 So the corollary follows from the previous theorem.
\end{proof}

\section{Invariant theory}

In this section, we summarize some facts from geometric invariant theory needed below. The basic reference is \cite{mumford}.
Let $G$ be a complex linear algebraic group acting algebraically on a complex algebraic variety $X$ in the sense that 
 the action is defined by  a morphism
$\alpha:G\times X\to X$  of varieties.  
We will say that $X$ is a $G$-variety. A morphism of $G$-varieties is simply an equivariant morphism.
Given a $G$-variety,
the categorical quotient is a variety $Y$ with a morphism $\pi:X\to Y$
such that 
$$
\xymatrix{
 G\times X\ar@{=>}[r]^{\alpha}_{p_2} & X\ar[r]^{\pi} & Y
}
$$
is coequalizer, where $p_2$ is the second projection. If the categorical quotient exists, then it  is characterized up to isomorphism by
this property and we denote it by $X//G$.  The symbol $X/G$ is reserved for the set theoretic quotient, or orbit space, which might be different.
The action is effective if the map $G\to Aut(X)$ is injective. Effectivity can always be achieved by replacing $G$ by its image $\bar G$. 
The quotient $X//G= X//\bar G$ is unaffected.

The categorical quotient exists when $X= \Spec R$ is  an affine variety and $G$ is reductive.
In this case
$$X//G= \Spec R^G$$
Suppose  $X$ is a quasiprojective $G$-variety.  Then it possible to construct a categorical quotient of a large open set which depends on 
an equivariant locally closed embedding $X\subset \PP^N$. The embedding is determined by the line bundle
 $L=\OO_X(1)$, and $G$ will act on the space of sections of $L$ and its powers.
 A point $\bar x\in X$ is semistable (with respect to $L$) if the closure of the orbit of a point
$x\in \A^{N+1}-\{0\}$, lying over $\bar x$, does not contain $0$.  
The set of semistable points is denoted by $X(L)^{ss}$, or $X^{ss}$ if $L$ is understood or  unimportant.
This forms a nonempty $G$-invariant open set. Mumford \cite{mumford} shows that 
$$Y=  \im \left[\pi:X(L)^{ss}\to \Proj \left(\bigoplus_{i=0}^\infty H^0(\overline{X}, L^{\otimes i}) \right) ^G\right]$$
is the quotient $ X(L)^{ss}//G$, where $\overline{X}\subset \PP^N$ is the closure and $\pi$ is induced by the inclusion of projective coordinate rings.
 The quotient $\pi:X^{ss}\to Y$ is a so called good quotient, which means that  $\pi$ is affine, disjoint $G$-invariant closed
 sets map to disjoint closed sets, and $\OO_Y= (\pi_*\OO_{X^{ss}})^G$.  We note that the affine case is subsumed by the 
second construction because:

\begin{lemma}
 When $X$ is affine, we can choose
$L$ so that $X(L)^{ss}=X$. 
\end{lemma}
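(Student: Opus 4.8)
The plan is to exhibit an explicit ample line bundle $L$ on the affine variety $X = \Spec R$ for which every point is semistable, so that the GIT construction reduces to $\Spec R^G$. First I would take the trivial line bundle $L = \OO_X$, equipped with the trivial $G$-linearization (i.e.\ $G$ acts trivially on the fibers). One must check this is an ample line bundle in the sense relevant to Mumford's construction: since $X$ is affine, $\OO_X$ is ample, and more to the point the section ring $\bigoplus_{i\ge 0} H^0(X, L^{\otimes i})$ recovers (a polynomial-extension of) $R$ itself. The key computation is then to verify the numerical/closure criterion for semistability: a point $\bar x \in X$ is semistable with respect to $L$ if for one (equivalently any) lift $x$ to the total space of $L^{-1}$ minus the zero section, the orbit closure $\overline{G\cdot x}$ avoids the zero section.

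The heart of the argument is that, with the trivial linearization, the total space of $L^{\vee}$ is just $X \times \A^1$ with $G$ acting only on the first factor, and a lift of $\bar x$ is a point $(\bar x, t)$ with $t \ne 0$. Its orbit is $(G\cdot \bar x) \times \{t\}$, whose closure is $\overline{G\cdot \bar x} \times \{t\}$, which visibly never meets $X \times \{0\}$. Hence every point of $X$ is semistable: $X(L)^{ss} = X$. It then remains to identify the resulting good quotient with $\Spec R^G$: Mumford's construction gives $Y = \Proj\big((\bigoplus_i H^0(X,L^{\otimes i}))^G\big)$, and with the trivial linearization the graded ring in question is the Rees-type algebra $R^G[s]$ (degree-$i$ piece $R^G \cdot s^i$), whose $\Proj$ is $\Spec R^G$. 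This matches the description $X//G = \Spec R^G$ from the affine case, and one checks the two quotient maps coincide by the universal property of the categorical quotient.

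I do not expect a serious obstacle here; the statement is essentially a bookkeeping compatibility between the affine construction of $\Spec R^G$ and Mumford's projective construction. The one point requiring a little care is that Mumford's setup as recalled in the excerpt is phrased for a quasiprojective $X$ with a fixed equivariant embedding $X \subset \PP^N$ and $L = \OO_X(1)$; for $X$ merely affine one should either embed $X$ equivariantly into some $\PP^N$ (possible after choosing a finite-dimensional $G$-stable generating subspace of $R$, using reductivity of $G$ and the fact that $R$ is a rational $G$-module) and twist the resulting $\OO_X(1)$ appropriately, or simply observe that the relevant definitions (semistability via orbit closures in the line-bundle total space, and $Y = \Proj$ of the invariant section ring) make sense for any $X$ with an ample $G$-linearized $L$ and give the stated conclusion. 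Either way the verification that $X(L)^{ss} = X$ is the short computation above.
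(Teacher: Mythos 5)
Your proof is correct and is essentially the paper's argument in different clothing: the paper embeds $X$ equivariantly into a rational representation $V$, passes to $W=V\oplus\C$ with $\C$ the trivial representation, and observes that the lift of $X$ sits at height $1$ in the invariant coordinate, so orbit closures avoid the origin — which is exactly your ``trivial bundle with trivial linearization'' computation once you carry out the equivariant projective embedding you mention at the end. No substantive difference.
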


\begin{proof}
 We can choose an equvariant embedding $X\subset V$ into a rational representation. Let $\C$ denote the trivial representation.
 Then the closure of $X$ in $W=V\oplus \C$ does not contain $0$. Therefore the lemma holds for $L$ corresponding to 
 the embedding $X\subset \PP(W^*)$.
\end{proof}

We say that $Y$ is a geometric quotient of $X$ if it is a good quotient, and every point of $X$ has finite stabilizers and all orbits
are closed. The last condition implies  that $Y=X/G$ is the orbit space as a set.
A point $x\in X$ is stable if it has   a finite stabilizer and a closed orbit.   The set $X^s\subset X^{ss}$ of stable points
is an open, possibly empty, $G$-invariant subset.  The quotient $Y^s=X^s/G$ is a geometric quotient.
  Luna's theorem \cite[p 97]{luna} (see also \cite[appendix 1D]{mumford}) gives a good local description of geometric quotients.

\begin{thm}\cite{luna}\label{thm:luna}
 If $G$ is a reductive group and $\pi:X\to X/G=Y$ is a geometric quotient, then for each $x\in X$, there exists a finite group $H\subset G$,
 an $H$-subvariety $ S\subset X$, and a commutative diagram
 $$
\xymatrix{
 G\times S\ar[r]\ar[d] & (G\times S)/H\ar[r]^>>>>>{p}\ar[d] & X\ar[d]^{\pi} \\ 
 S\ar[r] & S/H\ar[r]^{p} & Y
}
 $$
 such that maps labeled $p$ are \'etale and they map onto Zarski open neighbourhoods of $x$ and $\pi(x)$.
 If $X$ is smooth, then $S$ can be chosen smooth.
\end{thm}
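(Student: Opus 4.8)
The plan is to reduce the statement to the affine case and then invoke Luna's \'etale slice theorem. Since $\pi:X\to Y$ is in particular a good quotient it is an affine morphism with $\OO_Y=(\pi_*\OO_X)^G$; covering $Y$ by affine opens and replacing $X$ by their ($G$-stable, saturated, affine) preimages, we may assume $X=\Spec A$ and $Y=\Spec A^G$ are affine. Because $\pi$ is moreover a geometric quotient, every orbit $Gz\subset X$ is closed and every stabilizer $G_z$ is finite; finite groups are reductive, so the hypotheses needed below hold with $H:=G_x$ finite.

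The core step is the construction of the slice. Fix $x$ and put $H=G_x$. Using reductivity of $H$, choose an $H$-stable linear complement $W$ to $T_x(Gx)$ inside $T_xX$, and an $H$-equivariant morphism $p:X\to T_xX$ with $p(x)=0$ and $dp_x=\mathrm{id}$, obtained by taking an $H$-equivariant linear section of the surjection from the maximal ideal of $x$ onto the cotangent space $(T_xX)^*$ and reading it as a system of coordinates on $X$ near $x$; then $p$ is \'etale at $x$. Let $S$ be the component through $x$ of $p^{-1}(W)$, cut down to a smooth $H$-stable locally closed subvariety (possible since $H$ is finite and $p^{-1}(W)$ is smooth at $x$ with $T_xS=W$; if $X$ is smooth one may keep $S$ smooth). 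Transversality $T_xX=T_x(Gx)\oplus T_xS$ shows that the multiplication map $\psi:G\times_H S\to X$, $[g,s]\mapsto gs$, has bijective differential at $[e,x]$ and is therefore \'etale there; after a further shrink of $S$ we arrange $Gx\cap S=\{x\}$, so $\psi$ carries the closed orbit $G\cdot[e,x]$ isomorphically onto the closed orbit $Gx$.

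Now Luna's fundamental lemma applies to $\psi$: being $G$-equivariant and \'etale at $[e,x]$ it is \'etale all along $G\cdot[e,x]$, and the closed orbits $G\cdot[e,x]$, $Gx$ correspond under it, so there is a $G$-saturated affine open neighbourhood of $[e,x]$ — which, shrinking $S$ once more, we take to be all of $G\times_H S$ — on which $\psi$ is \'etale onto a saturated open of $X$, the induced map $(G\times_H S)//G\to X//G$ is \'etale, and the square
$$
\xymatrix{
G\times_H S\ar[r]^{\psi}\ar[d] & X\ar[d]^{\pi} \\
(G\times_H S)//G\ar[r] & Y
}
$$
is Cartesian. Since $H$ is finite, $G\times_H S=(G\times S)/H$ and $(G\times_H S)//G=S/H$, so this is precisely the commutative diagram in the statement, with both maps labelled $p$ \'etale and mapping onto Zariski-open neighbourhoods of $x$ and of $\pi(x)$, and with $S$ smooth whenever $X$ is.

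The main obstacle is Luna's fundamental lemma itself, i.e.\ promoting an \'etale, orbit-preserving $G$-morphism of affine $G$-varieties to the base change of an \'etale morphism of GIT quotients over a $G$-saturated neighbourhood. This is where reductivity of $G$ enters essentially: one compares the rings of invariants upstairs and downstairs and uses that the fibres of $X\to X//G$ over points near $\pi(x)$ are single closed orbits. By comparison, the construction of $S$ and the identifications $G\times_H S=(G\times S)/H$, $(G\times_H S)//G=S/H$ are formal.
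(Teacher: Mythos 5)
The paper does not actually prove this statement: it is quoted from Luna \cite{luna} (see also \cite[appendix 1D]{mumford}), so there is no internal argument to compare against. Your sketch follows the standard architecture of Luna's \'etale slice theorem — reduce to the affine case (legitimate, since a good quotient is an affine morphism and the claim is local on $Y$), note that in a geometric quotient all orbits are closed and all stabilizers finite so $H=G_x$ is reductive for free, build a slice $S$ through $x$ transverse to the orbit, check that $\psi:G\times_H S\to X$ is \'etale at $[e,x]$, and then invoke the fundamental lemma to get a strongly \'etale map over a saturated neighbourhood, which yields exactly the displayed diagram.

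Two points need attention. First, your construction of the slice is only valid when $X$ is smooth at $x$, whereas the theorem is stated for an arbitrary geometric quotient (smoothness of $S$ is only asserted \emph{when} $X$ is smooth). A morphism $p:X\to T_xX$ with $dp_x=\mathrm{id}$ cannot be \'etale at a singular point: an \'etale map onto a smooth target would force $X$ to be smooth at $x$. Consequently $p^{-1}(W)$ need not be smooth, nor of the expected dimension, and the inference ``bijective differential at $[e,x]$, therefore \'etale there'' is not available for singular varieties. The standard repair is to choose a closed $G$-equivariant embedding $X\hookrightarrow V$ into a rational $G$-module, pick an $H$-stable complement $N$ of $T_x(Gx)$ inside $T_xV=V$, set $S=X\cap(x+N)$, and deduce that $G\times_H S\to X$ is \'etale at $[e,x]$ by base change from the \'etale map $G\times_H(x+N)\to V$; this also makes $S$ smooth whenever $X$ is. Second, Luna's fundamental lemma — which you rightly identify as the place where reductivity of $G$ and closedness of the orbit enter — is the real content of the theorem, and your argument uses it as a black box. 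As written, the proposal is therefore a correct reduction of the slice theorem to the fundamental lemma in the smooth case, not a complete proof of the statement as given.
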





\section{Equivariant Hodge modules}

Suppose that $X$ is a smooth $G$-variety with quotient $\pi:X\to Y=X//G.$ Let $\F'$ be a sheaf on $X//G$, and let $\F=\pi^*\F'$.
Then since $\pi\circ p_2=\pi\circ \alpha$, it follows that $p_2^*\F\cong \alpha^*\F$.  Furthermore, $\pi^*id$ is
an isomorphism $\phi:p_2^*\F\cong \alpha^*\F$ which
satisfies the cocycle condition:
$$s^{*}\phi = id \quad \text{and} \quad b^{*}\phi \circ q^{*}\phi = m^{*} \phi,$$
where the maps $s, b, q,$ and $m$ are given below.
\begin{center}
$\begin{cases}
s: X \rightarrow G \times X & s(x) = (e, x) \\
m: G \times G \times X \rightarrow G \times X & m(g,h,x) = (gh,x)\\
b: G \times G \times X \rightarrow G \times X & b(g,h,x) = (g, \alpha(h,x))\\
q:G \times G \times X \rightarrow G \times X & q(g,h,x) = (h,x)\\
\end{cases}$
\end{center}
See also \cite[Defn 9.10.3]{htt} or \cite{achar}. A $G$-equivariant sheaf is a pair $(\F,\phi)$, where $\phi$ is an isomorphism as above satisfying
the cocycle condition.
 Equivariance implies that $g^*\F\cong \F$ for all $g\in G$, but  it is stronger than this.
 It implies that $G$ will act on the cohomology groups of $\F$. This fact was already used implicitly in the previous section.
 A morphism of equivariant sheaves is required to be compatible
with the $\phi$'s. Let $Coh_G(X)$ be the category of coherent equivariant sheaves.

\begin{prop}\label{prop:free}
  When  $G$ acts freely  on $X$ in the sense that $X\to X/G$ is a locally trivial principal bundle in the \'etale topology,
  $\pi^*$ gives an equivalence of categories $Coh(X/G)\approx Coh_G(X)$. The inverse sends $\F\mapsto
  (\pi_*\F)^G$.
  \end{prop}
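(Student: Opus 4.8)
The plan is to reduce the statement to \'etale descent for quasi-coherent sheaves. Since $G$ acts freely, the map $\pi \colon X \to X/G$ is, by hypothesis, a locally trivial principal $G$-bundle in the \'etale topology, so there is an \'etale cover $\{U_i \to X/G\}$ over which $\pi$ becomes the trivial bundle $G \times U_i \to U_i$. First I would recall the basic dictionary: for a trivial bundle $p_2 \colon G \times U \to U$, giving a $G$-equivariant sheaf on $G \times U$ is the same as giving an ordinary sheaf on $U$, because the equivariant structure $\phi$ together with the cocycle condition forces the sheaf to be $p_2^*$ of its restriction along the unit section, and conversely $p_2^*$ of any sheaf on $U$ carries a canonical equivariant structure. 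This is the local form of the statement.

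Next I would globalize. Given $(\F,\phi) \in Coh_G(X)$, the descent datum one needs on $X/G$ is encoded exactly by the equivariant structure: the isomorphism $\phi \colon p_2^*\F \cong \alpha^*\F$ on $G \times X$, restricted over each \'etale chart where $G \times X$ is identified with the fibered product $X \times_{X/G} X$, becomes a gluing isomorphism on $X \times_{X/G} X$, and the cocycle conditions $s^*\phi = \mathrm{id}$, $b^*\phi \circ q^*\phi = m^*\phi$ translate into the identity-on-the-diagonal and cocycle-on-the-triple-fiber-product conditions of \'etale descent. Since $\pi$ is faithfully flat (a locally trivial fibration in the \'etale topology is faithfully flat and quasi-compact), \'etale (equivalently fppf) descent for quasi-coherent sheaves, applied to a coherent module, shows this descent datum is effective: there is a unique coherent $\F'$ on $X/G$ with $\pi^*\F' \cong \F$ compatibly with the descent/equivariance data. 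Checking $\F' \cong (\pi_*\F)^G$ is then a local computation: over a chart $U$ with $\pi^{-1}(U) \cong G \times U$, one has $\pi_*\F|_U = (p_2)_*(p_2^* \F'|_U) = \F'|_U \otimes_{\OO_U} (p_2)_*\OO_{G\times U}$, and taking $G$-invariants picks out the trivial isotypic component, i.e.\ $\F'|_U$, since $G$ is reductive and $(p_2)_*\OO_{G \times U}$ decomposes into isotypic pieces with the trivial piece equal to $\OO_U$. Full faithfulness of $\pi^*$ and the fact that morphisms of equivariant sheaves correspond to morphisms of the descended sheaves likewise follow from the descent equivalence, and $\pi^*$ followed by $(\pi_*(-))^G$ being the identity on $Coh(X/G)$ follows from $(\pi_*\pi^*\F')^G \cong \F'$ by the same local computation.

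The main obstacle I anticipate is not the descent machinery itself, which is standard, but rather the bookkeeping that matches the three cocycle maps $s, b, q, m$ of the equivariant formalism with the simplicial structure $X \times_{X/G} X \times_{X/G} X \rightrightarrows X \times_{X/G} X \rightrightarrows X$ of the \v{C}ech nerve of $\pi$; one must exhibit the isomorphism $G \times X \cong X \times_{X/G} X$, $(g,x) \mapsto (\alpha(g,x), x)$ (valid precisely because the action is free with quotient $X/G$), and verify that under this identification $\alpha$ and $p_2$ become the two projections, $m, b, q$ become the three projections from the triple product, and $s$ becomes the diagonal — after which the translation of conditions is a formal check. A secondary point requiring a line of justification is that the descended sheaf is coherent (not merely quasi-coherent): this is automatic since coherence can be checked \'etale-locally and $\pi^*\F' = \F$ is coherent with $\pi$ faithfully flat. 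I would present the proof by first stating the local equivalence over the trivial bundle, then invoking \'etale descent to globalize, and finally identifying the inverse functor via the local invariants computation, citing \cite[Defn 9.10.3]{htt} for the equivariant formalism and a standard reference for fppf descent of quasi-coherent sheaves.
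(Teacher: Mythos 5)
Your proposal is correct and is exactly the argument the paper has in mind: the paper omits the proof, remarking only that the statement is ``a standard consequence of faithfully flat descent,'' and your write-up supplies precisely that descent argument, including the identification $G\times X\cong X\times_{X/G}X$ and the matching of the cocycle conditions with the \v{C}ech descent data. The only cosmetic remark is that reductivity is not needed to see $(\pi_*\pi^*\F')^G\cong\F'$ locally, since the invariants of the regular representation $\OO(G)$ are just the constants for any algebraic group.
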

   
   We omit the details, but this is a standard consequence of faithfully flat descent. 
   
   \begin{defn}
   In general, for possibly nonfree actions, the functor $\pi_*^G:Coh_G(X)\to Coh(X//G)$ defined by   $\F\mapsto \pi_{*}^{G}\F:=(\pi_*\F)^G$ is right adjoint to $\pi^*$.  
   \end{defn}

\begin{lemma}\label{lemma:piGsplits}
The functors $\pi_*$  and  $\pi^G_*$ are exact.
 When $G$ is reductive and $\F\in Coh_G(X)$, $\pi^G_*\F$ is a direct summand of $ \pi_*\F$.
\end{lemma}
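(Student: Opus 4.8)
The plan is to handle the three assertions in turn: exactness of $\pi_*$, exactness of $\pi^G_*$, and the direct-summand statement. For the exactness of $\pi_*$, recall that $\pi:X\to Y=X//G$ is an affine morphism (this is part of the construction of good quotients recalled in Section 2, and is immediate in the affine case $X=\Spec R$, $Y=\Spec R^G$). Pushforward along an affine morphism is exact on quasi-coherent sheaves, so $\pi_*$ is exact; it preserves coherence since $G$ is reductive and $R^G\hookrightarrow R$ is finite (Hochster–Roberts, or more elementarily for the statement about coherence, because $R$ is a finitely generated $R^G$-module). For the second assertion, note that taking $G$-invariants is exact on the category of rational (i.e. algebraic) $G$-representations when $G$ is reductive: this is precisely the linear reductivity of $G$ in characteristic $0$. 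Concretely, over an affine open $\Spec A\subseteq Y$ with $\pi^{-1}(\Spec A)=\Spec B$, the functor $M\mapsto M^G$ on $G$-equivariant $B$-modules is the composition of the exact functor $\pi_*$ (restriction of scalars to $A$, viewing everything as a $G$-equivariant $A$-module) with the exact invariants functor on rational $G$-representations; hence $\pi^G_*$ is exact.

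For the direct-summand statement, the key input is the \emph{Reynolds operator}: since $G$ is reductive, every rational $G$-representation $V$ decomposes canonically as $V = V^G \oplus V_{G}$, where $V_G$ is the sum of all nontrivial irreducible subrepresentations, and the projection $\rho_V : V\to V^G$ (the Reynolds operator) is $G$-equivariant and functorial in $V$. Applying this to $\F\in Coh_G(X)$: over any $G$-invariant affine open $\Spec A\subseteq Y$ with preimage $\Spec B$, the sections $\Gamma(\Spec B,\F)$ form a rational $G$-representation — or rather a union of finite-dimensional ones — and the Reynolds operator gives an $\OO_Y$-linear (because $A=B^G$ acts by $G$-invariant scalars, so the projection is $A$-linear) splitting of the inclusion $(\pi_*\F)^G\hookrightarrow \pi_*\F$. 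These splittings are compatible with restriction by functoriality of the Reynolds operator, so they glue to a global $\OO_Y$-linear retraction $\pi_*\F\to \pi^G_*\F$ of the inclusion. This exhibits $\pi^G_*\F$ as a direct summand of $\pi_*\F$ in $Coh(Y)$.

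I expect the only genuine subtlety to be the $\OO_Y$-linearity and gluing of the Reynolds splitting, i.e. checking that the purely representation-theoretic projection respects the module structure and the localizations defining $Y$; this is a standard fact (see \cite{mumford}, \cite{boutot}) but it is where one must be slightly careful, since $\F$ need not have finite-dimensional global sections and one works with the filtered union of finite-dimensional $G$-submodules. Everything else — affineness of $\pi$, exactness of invariants for reductive $G$ — is standard invariant theory as summarized in Section 2.
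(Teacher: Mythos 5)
Your argument is essentially the paper's own proof, just written out in full: the paper disposes of the lemma in two sentences (exactness of $\pi_*$ from affineness of $\pi$; the splitting of $\F(\pi^{-1}U)^G\subset \F(\pi^{-1}U)$ from reductivity), and your Reynolds-operator discussion, including the check of $\OO_Y$-linearity and compatibility with restriction, is the correct way to fill in the second sentence. One parenthetical claim is wrong, however: you assert that $\pi_*$ preserves coherence because ``$R^G\hookrightarrow R$ is finite (Hochster--Roberts\dots)''. Hochster--Roberts is a statement about Cohen--Macaulayness, not finiteness, and $R$ is in general \emph{not} a finitely generated $R^G$-module (take $G=\C^*$ acting on $\C[x,y]$ with weights $1,-1$, so $R^G=\C[xy]$ and $X^s\neq\emptyset$; the elements $x^n$ prevent finite generation). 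So $\pi_*\F$ is only quasi-coherent in general. This does not damage the lemma: exactness of $\pi_*$ on quasi-coherent sheaves needs only affineness, and the coherence that actually matters is that of $(\pi_*\F)^G$, which is the separate standard fact that invariants of a finitely generated equivariant module are finitely generated over $R^G$. You should simply delete the finiteness claim.
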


\begin{proof}
The first statement follows from the fact that the  map $\pi$ is affine.
 For any open set $U\subset Y$, reductivity of $G$ implies that $\F(\pi^{-1}U)^G\subset \F(\pi^{-1}U)$
 is a direct summand.
\end{proof}
  
   If $X=\Spec R$, and $V$ is a rational representation of $G$,
   then $M=V\otimes_\C R$, with diagonal action, is naturally    
   an object of $Coh_G(X)$. In fact, the set of  objects of this form generates this category.
   We have that $\pi_*^G\tilde M= \widetilde{M^G}$.

 \begin{defn}\cite[Defn. 5.1]{achar}
 A $G$-equivariant Hodge module on $X$ is a pair $(\cM, \beta),$ where $\cM \in MHM(X)$, and $\beta: p_{2}^{*}\cM \rightarrow \alpha^* \cM$ is an isomorphism satisfying the cocycle condition.
\end{defn}
 
 Further information about equivariant (mixed) Hodge modules can be found in Achar's notes  \cite{achar}.
 Let $HM_{?,G}(X, ??)$ denote the category of $G$-equivariant Hodge modules (with strict support ? and weight  ??).
This category can be described as follows:
 
\begin{prop}\label{prop:GVHS}
If $Z\subseteq X$ is a $G$-invariant closed subvariety,
a variation of Hodge structure  of weight $w$ on a smooth $G$-invariant Zariski open set $U\subseteq X$ 
with  an isomorphism
 $\phi:p_2^*V\cong \alpha^*V$ satisfying the cocycle identity extends to $G$-equvariant Hodge module
 of weight $w+\dim Z$. In fact,
 there is an equivalence of categories
 $$HM_{Z,G}(X, w)\approx 2\text{-}\varinjlim_{U\subseteq Z}VHS_G(U,w-\dim Z)$$
 \end{prop}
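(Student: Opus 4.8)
The plan is to reduce the equivariant statement to Saito's non-equivariant equivalence $HM_Z(X,w)\approx 2\text{-}\varinjlim_{U\subseteq Z}VHS(U,w-\dim Z)$, which the excerpt has already recalled, and then check that the extra datum of an equivariant structure $\beta$ (resp. $\phi$) is transported faithfully across the equivalence. First I would fix the open set $U\subseteq Z$ on which a simple object of $HM_{Z}(X,w)$ restricts to a variation of Hodge structure. Because $Z$ is $G$-invariant, the singular locus of the VHS is a $G$-invariant closed subset, so after shrinking we may take $U$ to be $G$-invariant; this makes the $2$-colimit on the right over $G$-invariant opens well-defined. The functor in one direction sends a $G$-equivariant Hodge module $(\cM,\beta)$ to its restriction $(\cM|_U,\beta|_U)$, which is a VHS together with an isomorphism $\phi=\beta|_U:p_2^*V\cong\alpha^*V$ satisfying the cocycle condition (restriction commutes with the simplicial maps $s,b,q,m$ since those are built from $\alpha$ and projections, all of which are $G$-equivariant and restrict to the preimages of $U$).

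Next I would construct the inverse. Given a $G$-equivariant VHS $(V,\phi)$ on a $G$-invariant open $j:U\hookrightarrow X$, Saito's theorem produces a unique Hodge module $\cM\in HM_{Z}(X,w+\dim Z)$ with $\cM|_U\cong V$, underlying perverse sheaf the intermediate extension $j_{!*}L[\dim Z]$. I claim $\phi$ extends uniquely to an isomorphism $\beta:p_2^*\cM\cong\alpha^*\cM$. The point is functoriality and uniqueness: both $p_2^*\cM$ and $\alpha^*\cM$ are (shifted) Hodge modules on $G\times X$ — here one uses that $p_2$ is smooth, so $p_2^*[\dim G]$ is exact on $MHM$, and $\alpha^*[\dim G]$ agrees with $p_2^*[\dim G]$ after the isomorphism, since $\alpha$ is also smooth (a composition of the $G$-action with a projection, or one uses that $\pi\circ\alpha=\pi\circ p_2$ together with the local model). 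These two Hodge modules on $G\times X$ are both strictly supported on $G\times Z$ and both restrict on $G\times U$ to $p_2^*V$ and $\alpha^*V$ respectively; the given $\phi$ is an isomorphism of these restrictions, and by the uniqueness half of Saito's equivalence applied on $G\times X$ it extends uniquely to $\beta:p_2^*\cM\cong\alpha^*\cM$. Uniqueness of the extension then forces the cocycle identity for $\beta$ on $G\times G\times X$: the two sides of the cocycle relation are isomorphisms between the same pair of Hodge modules on $G\times G\times X$ that agree on the dense open $G\times G\times U$ (because $\phi$ satisfies the cocycle identity there), hence agree everywhere.

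Having both functors, I would check they are mutually quasi-inverse: restricting $(\cM,\beta)$ to $U$ and re-extending recovers $(\cM,\beta)$ by the uniqueness in Saito's theorem (non-equivariantly for the object, and by the uniqueness of the extension of $\beta$ just established), while extending $(V,\phi)$ and restricting gives back $(V,\phi)$ on the nose. Morphisms are handled the same way: a morphism of equivariant Hodge modules is a morphism of the underlying Hodge modules compatible with the $\beta$'s, and compatibility can be tested on the dense open $U$, so $\mathrm{Hom}$ on both sides is the same subspace of the non-equivariant $\mathrm{Hom}$ cut out by the same equivariance condition. Finally, to pass from simple (strictly supported) objects to the whole category $HM_{Z,G}(X,w)$, I would note that $HM_{Z,G}(X,w)$ is semisimple — a $G$-equivariant Hodge module is in particular a Hodge module strictly supported on $Z$, hence a direct sum of simple ones, and the equivariant structure respects the isotypic decomposition because $G$ is connected (or, more robustly, because the decomposition into simple summands is canonical and $\beta$ is an isomorphism, so it permutes summands and, $G$ being connected, fixes each) — so the equivalence extends additively.

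I expect the main obstacle to be the smoothness/exactness bookkeeping for $p_2^*$ and $\alpha^*$ on $G\times X$ and, relatedly, making precise that the uniqueness part of Saito's equivalence (stated for a variety) applies verbatim on the product $G\times X$ with strict support along $G\times Z$: one must know that a $G$-equivariant VHS on $G\times U$ of the relevant type is exactly the restriction of a unique strictly-supported Hodge module, and that $p_2^*V$ and $\alpha^*V$ are genuinely (shifts of) such VHS's in the Hodge-theoretic sense, not merely on the level of perverse sheaves. Once the functors $p_2^*[\dim G]$ and $\alpha^*[\dim G]$ are known to send $HM_{Z}(X,w)$ into $HM_{G\times Z}(G\times X,w+\dim G)$ and to be compatible with restriction to opens, everything else is the uniqueness-on-a-dense-open argument repeated on $X$, $G\times X$, and $G\times G\times X$.
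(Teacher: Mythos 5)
Your proposal is correct and follows the same route as the paper, which simply declares the statement an immediate consequence of Saito's equivalence $HM_Z(X,w)\approx$ (generic VHS on $Z$) from \cite[Theorem 3.21]{saito2}; your write-up just makes explicit the standard transport of the equivariance datum $\phi$ across that equivalence via smoothness of $p_2,\alpha$ and uniqueness of extensions from a dense open. The only cosmetic remark is that the final reduction to simple objects (and the appeal to connectedness of $G$) is unnecessary, since your restriction and extension functors are already mutually inverse on all objects and morphisms.
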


\begin{proof}
 This is an immediate consequence of  \cite[Theorem 3.21]{saito2}.
\end{proof}

\begin{cor}\label{cor:Ggen}
 If $j:U\subset X$ is a nonempty $G$-invariant open set, restriction $j^*$ gives an equivalence
 $$HM_{X,G}(X, w)\approx HM_{U,G}(U,w)$$
 with inverse $j_{!*}$.
\end{cor}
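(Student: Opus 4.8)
The plan is to deduce this from Proposition \ref{prop:GVHS} together with the corresponding non-equivariant statement, which is Saito's theorem \cite[Theorem 3.21]{saito2} that a Hodge module with strict support $X$ is determined by its generic variation of Hodge structure. The key point is that the $2$-colimit $2\text{-}\varinjlim_{U\subseteq X}VHS_G(U, w-\dim X)$ appearing on the right of Proposition \ref{prop:GVHS} (applied with $Z=X$) does not change if we restrict the indexing category to those $G$-invariant open sets $U$ contained in a fixed nonempty $G$-invariant open $U_0$: any two $G$-invariant dense opens have a common $G$-invariant dense open refinement (their intersection), so the indexing poset of opens inside $U_0$ is cofinal in the poset of all dense $G$-invariant opens of $X$.

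First I would apply Proposition \ref{prop:GVHS} with $Z = X$ to get $HM_{X,G}(X,w)\approx 2\text{-}\varinjlim_{U\subseteq X}VHS_G(U, w-\dim X)$, and likewise, noting that a nonempty $G$-invariant open $U$ of $X$ is itself a smooth $G$-variety containing $U$ as its own dense open, apply it again to get $HM_{U,G}(U,w)\approx 2\text{-}\varinjlim_{U'\subseteq U}VHS_G(U', w-\dim U)$. Since $\dim U = \dim X$ and the two colimits are taken over cofinal systems of $G$-invariant dense opens (every dense $G$-invariant open of $U$ is a dense $G$-invariant open of $X$, and conversely every dense $G$-invariant open of $X$ meets $U$ in one), these colimit categories are canonically equivalent, and chasing the equivalences shows the resulting composite $HM_{X,G}(X,w)\to HM_{U,G}(U,w)$ is exactly $j^*$.

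For the inverse, I would identify it with $j_{!*}$: given $\cM_U\in HM_{U,G}(U,w)$, choose a smooth dense $G$-invariant open $U'\subseteq U$ on which the underlying object is a variation of Hodge structure $V$ with its equivariant structure, and let $\cM$ be the unique equivariant extension supplied by the colimit equivalence. On the level of perverse sheaves, the non-equivariant theory identifies $\cM$ with the intermediate extension $j_{U'!*}$ of the local system underlying $V$, and restriction to $U$ recovers $j_{U'\subseteq U\,!*}$ of the same local system, which is the perverse sheaf underlying $\cM_U$ (using the standard compatibility of intermediate extensions under the factorization $U'\hookrightarrow U\hookrightarrow X$, as in \cite[Remark 5.2.7]{dimca}, exactly as in the proof of Lemma \ref{lemma:dirfact}). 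By the Riemann–Hilbert correspondence and the uniqueness in strict support, $\cM|_U\cong\cM_U$ and $j_{!*}(\cM|_U)\cong\cM$, so $j^*$ and $j_{!*}$ are mutually inverse equivalences compatible with the $G$-equivariant structures.

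The main obstacle is bookkeeping rather than mathematics: one must check that the equivalence of Proposition \ref{prop:GVHS} is genuinely natural in $X$ (i.e.\ compatible with restriction to invariant opens) so that the two applications glue, and that under this naturality the abstract inverse functor coming from the colimit description really is the intermediate extension $j_{!*}$ rather than some other extension. Both follow from Saito's construction, but they require unwinding the definition of the $2$-colimit and the strict-support decomposition; once that is granted, the cofinality of dense $G$-invariant opens makes the statement immediate.
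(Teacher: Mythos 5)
Your proposal is correct and follows the route the paper intends: the corollary is stated without proof precisely because it drops out of Proposition \ref{prop:GVHS} once one observes that the smooth dense $G$-invariant opens contained in $U$ form a cofinal subsystem of those of $X$ (since $X$ is irreducible, any nonempty invariant open is dense and intersections stay dense and invariant), and the identification of the inverse with $j_{!*}$ is the same intermediate-extension compatibility used in Lemma \ref{lemma:dirfact}. Nothing in your argument is missing or off-track.
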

    
 Let $d=\dim G$, and set $\pi^{t} \cN :=\pi^*\cN[d]$.  Note that $\pi^{t}$ is compatible
 with the usual pull back of variations of Hodge structure, i.e. when $\cN$ corresponds to $V\in VHS(U)$, with $U\subset X$ open,
$\pi^{t} \cN$ corresponds to $\pi^*V$.
 If $\cN\in HM(X//G)$, 
  then $(\pi^t \cN, \pi^t id)$ (which we simply  denote by $\pi^t \cN$)
gives   an object of $HM_G(X)$. 
  Proposition \ref{prop:free} works for Hodge modules.
 
 \begin{prop}\label{prop:freeHM}
  When $G$ acts freely on  $X$, $\pi^t$  gives an equivalence of categories between $HM(X/G)$ and $HM_{G}(X)$.
  More generally, if $G$ has a normal subgroup $H$ that acts freely on $X$, then $\pi^t$  gives an equivalence 
  $HM_{G/H}(X/H)\approx HM_{G}(X)$.
  \end{prop}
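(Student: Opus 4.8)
The plan is to reduce the general statement to the free case, and to prove the free case by combining faithfully flat descent for $\cD$-modules with the perverse-sheaf/Riemann--Hilbert side of Hodge module theory. First I would treat the free action. Since $\pi\colon X\to X/G$ is a locally trivial principal $G$-bundle in the \'etale topology, it is in particular smooth and surjective, of relative dimension $d=\dim G$. The functor $\pi^t=\pi^*[d]$ is exact and fully faithful on the level of $\cD$-modules (smooth pullback, with the shift accounting for the relative dimension) and on the level of perverse sheaves (pullback along a smooth surjection with connected fibers, shifted, is fully faithful onto the $G$-equivariant objects by descent). These two pieces of data are compatible via the Riemann--Hilbert correspondence, and $\pi^t$ visibly preserves Hodge filtrations (the filtration on $\pi^*M$ is $\pi^*F_\bullet M$ shifted) and weights. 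So $\pi^t$ lands in $HM_G(X)$ and is fully faithful. For essential surjectivity, given $(\cM,\beta)\in HM_G(X)$, one forms the candidate descent datum on $X/G$: on underlying $\cD$-modules and perverse sheaves this is classical faithfully flat descent along $\pi$ (the cocycle condition on $\beta$ is exactly a descent datum), producing $M'$ on $X/G$ with $\pi^*M'\cong M$ compatibly; the filtration and $\Q$-structure descend as well because all the comparison isomorphisms are $G$-equivariant. One must then check that the descended object $\cM'$ satisfies Saito's axioms for a Hodge module; this is local on $X/G$ in the \'etale topology (Saito's conditions are \'etale local), and \'etale locally $\pi$ is the projection $G\times (X/G)\to X/G$, so $\cM$ is \'etale locally $\pi^t\cM'$ and the axioms descend. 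This gives the equivalence $HM(X/G)\approx HM_G(X)$.

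For the refined statement, suppose $H\trianglelefteq G$ acts freely on $X$. Set $G'=G/H$, which acts on $X/H$, and note $\pi$ factors as $X\to X/H\to X/G=(X/H)/G'$. Applying the free case to the principal $H$-bundle $X\to X/H$ gives an equivalence $HM(X/H)\approx HM_H(X)$, and I want to promote this to an equivalence between $G'$-equivariant objects on $X/H$ and $G$-equivariant objects on $X$. The point is that the $G$-action on $X$ induces a $G'$-action on $X/H$ (since $H$ is normal), and an equivariance datum for the residual $G'$-action on a Hodge module over $X/H$ corresponds, under $\pi_H^*[\dim H]$, to a $G$-equivariance datum for the $H$-descended module over $X$. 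This is again a descent argument, carried out one equivariance datum at a time: the category $HM_{G'}(X/H)$ is the category of objects of $HM(X/H)$ equipped with a $G'$-equivariant structure, $HM_G(X)$ likewise for $HM_H(X)$ with a $G'$-action, and the equivalence $HM(X/H)\approx HM_H(X)$ is $G'$-equivariant because $\pi_H^t$ is natural in the base and intertwines the two $G'$-actions. Taking $G'$-equivariant objects on both sides yields $HM_{G/H}(X/H)\approx HM_G(X)$.

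The main obstacle I expect is the verification that the descended data actually constitute a Hodge module, i.e.\ that Saito's conditions descend along the faithfully flat (indeed smooth surjective) map $\pi$. The underlying $\cD$-module and perverse-sheaf descent, and the compatibility of filtrations, are standard; what needs care is that the defining inductive conditions on nearby and vanishing cycles — which are the substance of the definition of $HM(X)$ — are \'etale local on the base, so that one may check them after the \'etale-local trivialization $\pi^{-1}(V)\cong G\times V$, where the module in question is literally an external pullback $\pi^t\cM'|_V$ and the conditions hold by definition. Once this \'etale-local reduction is in place the argument is formal; absent it one would have to unwind Saito's axioms directly, which is the part I would want to cite rather than reprove (e.g.\ Saito's stability of $HM$ under smooth pullback, or Achar's treatment in \cite{achar}). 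The rest — full faithfulness, compatibility with pullback of variations of Hodge structure as already noted in the paragraph preceding the proposition, and the normal-subgroup bootstrap — is bookkeeping.
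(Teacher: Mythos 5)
Your sketch is correct in outline, but the paper does not actually carry out any argument here: its entire proof of this proposition is the citation ``See \cite[\S 6]{achar}'', where exactly the descent argument you describe (faithfully flat descent of the $\cD$-module, filtration, and $\Q$-structure along the principal bundle $X\to X/G$, plus the \'etale-locality of Saito's axioms and their stability under smooth pullback) is worked out. So you have essentially reconstructed the content of the cited reference rather than taken a different route; the only point worth adding is that polarizability of the descended object also needs a word (a polarization can be made $G$-invariant and then descends), which the reference handles.
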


\begin{proof}
 See \cite[\S 6]{achar}. Note that $\pi^t$ is denoted by $\pi^\dagger$ in [loc. cit.].
\end{proof}

More generally, we will prove the following proposition.

\begin{prop}\label{EquivProp} Suppose that $G$ is a reductive group that acts effectively on $X$ such that $X^s\not=\emptyset$. Let $Y = X//G$.
Then there is a functor 
$$\pi^{\dagger}: HM_{Y}(Y,w) \rightarrow HM_{X,G}(X,w+d),$$
which gives an  equivalence of categories, and which 
 agrees with  $\pi^t$ when $G$ acts freely.
 \end{prop}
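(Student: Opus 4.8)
The plan is to reduce to the free case handled in Proposition \ref{prop:freeHM} by exploiting the stable locus. Let $X^s \subseteq X$ be the stable locus, which is a nonempty $G$-invariant open subset, and let $Y^s = X^s/G = \pi(X^s) \subseteq Y$; by Luna's theorem (Theorem \ref{thm:luna}), $\pi|_{X^s}\colon X^s\to Y^s$ is a geometric quotient, and $Y^s$ is a nonempty open subset of $Y$. Since $G$ acts effectively and $X$ is smooth, Luna's slice theorem says that \'etale-locally the quotient $X^s\to Y^s$ looks like $(G\times S)/H \to S/H$ for a finite group $H$; in particular, $G$ does not quite act freely on $X^s$, but the action has only finite stabilizers. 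I would first observe that by passing to a further $G$-invariant open set $U \subseteq X^s$ we can arrange that the finite stabilizers are constant, or—better—use that a geometric quotient with finite stabilizers is handled by the ``$G$ with normal subgroup acting freely'' clause, or directly cite the equivariant descent results in Achar's notes \cite{achar} which cover geometric quotients. The cleanest formulation: over $X^s$, $\pi^t = \pi^\dagger$ already gives an equivalence $HM_{Y^s}(Y^s, w) \approx HM_{X^s, G}(X^s, w+d)$ by (the geometric-quotient version of) Proposition \ref{prop:freeHM}.

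Next I would assemble the equivalence over all of $Y$ from the equivalence over $Y^s$ using the intermediate-extension functors. By Corollary \ref{cor:Ggen}, restriction to $X^s$ gives an equivalence $HM_{X,G}(X, w+d) \approx HM_{X^s, G}(X^s, w+d)$ with inverse $j_{!*}$ (the $G$-equivariant intermediate extension), provided $X^s$ is a nonempty $G$-invariant open subset of the irreducible variety $X$—which it is. Symmetrically, restriction to $Y^s$ gives an equivalence $HM_Y(Y, w) \approx HM_{Y^s}(Y^s, w)$ with inverse $j_{!*}$, using that $Y$ is irreducible (it is the image of the irreducible $X$) and $Y^s$ is a nonempty open subset. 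So I would \emph{define} $\pi^\dagger$ as the composite
$$HM_Y(Y,w) \xrightarrow{\;j_Y^*\;} HM_{Y^s}(Y^s, w) \xrightarrow{\;\pi^t\;} HM_{X^s, G}(X^s, w+d) \xrightarrow{\;(j_X)_{!*}\;} HM_{X, G}(X, w+d),$$
which is an equivalence as a composite of equivalences, with quasi-inverse given by the evident composite in the other direction.

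Finally I must check the compatibility claim: that $\pi^\dagger$ agrees with $\pi^t = \pi^*(-)[d]$ when $G$ acts freely on $X$. When the action is free, $X^s = X$ (every orbit is closed with trivial, hence finite, stabilizer, and stability is an open condition that holds everywhere once it holds—or one simply notes a free action with a good quotient has all points stable), so $Y^s = Y$ and both composite maps above collapse to $\pi^t$ itself, since the intermediate extensions are restrictions of restrictions. More carefully, I would verify that for general free-quotient situations $\pi^\dagger$ and $\pi^t$ induce the same functor by checking they agree after restriction to $X^s = X$, which is immediate, and that the $j_{!*}$ on both sides is compatible with $\pi^t$—this is exactly the content of Proposition \ref{prop:freeHM} together with the fact that $\pi^t$ commutes with intermediate extension (it sends $j_{!*}$ to $j_{!*}$ because it is compatible with the pullback of variations of Hodge structure and with the Riemann–Hilbert correspondence, as noted in the discussion preceding Proposition \ref{prop:freeHM}).

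The main obstacle I anticipate is the first step: justifying that $\pi^t$ is an equivalence over the stable locus $Y^s$, where the action is not literally free but merely has finite stabilizers. Proposition \ref{prop:freeHM} as stated covers free actions and actions with a free normal subgroup, not arbitrary geometric quotients. One must either (a) invoke the full strength of \'etale descent for Hodge modules along the geometric quotient $X^s \to Y^s$—using Luna's slice theorem to reduce \'etale-locally to the finite-group quotient $S \to S/H$, for which one needs that $HM(S/H) \approx HM_H(S)$ for $H$ finite (a statement about finite quotients, genuinely different from the free case)—or (b) shrink $X^s$ to a $G$-invariant open set on which $G$ acts freely. Option (b) is delicate because the generic stabilizer of an effective action on a smooth stable locus is trivial by Luna, so there \emph{is} a nonempty $G$-invariant open set with free action; combined with Corollary \ref{cor:Ggen} on both sides this suffices, and is probably the cleanest route. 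I would expect to spend most of the proof's length making this reduction precise and citing the correct statement from \cite{achar}.
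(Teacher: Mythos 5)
Your overall skeleton is the same as the paper's: use Corollary \ref{cor:Ggen} on both sides to reduce to the stable loci $X^s\to Y^s$, where the quotient is geometric, and then treat that case. But the entire substance of the proof lies in the step you leave open, namely the equivalence $HM_{Y^s}(Y^s,w)\approx HM_{X^s,G}(X^s,w+d)$, and neither of your two proposed fixes closes it. Your option (b) rests on a false premise: an effective action of a reductive group with nonempty stable locus need \emph{not} be generically free. The principal isotropy group can be a nontrivial finite group --- e.g.\ $PGL_2$ acting on the affine space of binary quartics (generic stabilizer the Klein four-group, and quartics with distinct roots are stable), or $SL_2$ on binary cubics. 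So there may be no nonempty $G$-invariant open set on which the action is free, and you cannot shrink your way into Proposition \ref{prop:freeHM}. Relatedly, the assertion that $\pi^t=\pi^*(-)[d]$ ``already gives an equivalence'' over $X^s$ is not even well posed there: $\pi|_{X^s}$ need not be a smooth morphism (think of $x\mapsto x^2$ for $\Z/2$ acting on $\A^1$), so $\pi^*\cN[d]$ is not a pure Hodge module concentrated in one degree; the correct functor must extract $Gr^W_{w+d}\cH^d(\pi^*\cN)$ and then its strict-support summand.

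Your option (a) is the paper's actual route, but you do not carry it out, and it is where all the work is. The paper first proves that a finite surjective map $f:S\to T$ of irreducible varieties induces an equivalence $HM_S(S,w)\approx HM_T(T,w)$ via the functors $f^\#$ (strict-support summand of $Gr^W_w\cH^0(f^*\cN)$) and $f_\#$ (strict-support summand of $\cH^0(f_+\cM)$), using the trace splitting $\tfrac{1}{\ell}\,adj_!\circ adj_*=\mathrm{id}$ to see $\cN$ is a summand of $f_+f^*\cN$ (Lemma \ref{lemma1}); it then upgrades this to equivariant Hodge modules (Lemma \ref{lemma2}). Only then does Luna's diagram $G\times S\to(G\times S)/H\to X^s$ over $S\to S/H\to Y^s$ let one chain these finite-map equivalences together with the genuinely free quotient $G\times S\to S$ (where Proposition \ref{prop:freeHM} applies) to produce $\pi^\dagger=j_{!*}f_\# r_\# p^t q^\# h^\# k^*$. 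Without Lemmas \ref{lemma1} and \ref{lemma2}, or an equivalent descent statement for finite (non-free) quotients, your construction of $\pi^\dagger$ does not exist, so the proposal as written has a genuine gap at its central step.
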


To prove the proposition, we first need to prove two lemmas.

\begin{lemma}\label{lemma1}
If $f: S \rightarrow T$ is a surjective finite map between irreducible algebraic varieties, then there is an equivalence of categories between $HM_{S}(S,w)$ and $HM_{T}(T,w).$ The equivalence  are given by the functors

$$f^\#: HM_{T}(T,w) \rightarrow HM_{S}(S,w)$$ 
$$\cN \mapsto \cM = \text{maximal summand of $Gr^{W}_{w}(\cH^{0}(f^{*}\cN))$  with strict support on $S$}$$ 
and 
$$f_\#: HM_{S}(S,w) \rightarrow HM_{T}(T,w)$$ 
$$\cM' \mapsto \cN' = \text{maximal summand of $\cH^{0}(f_{+}\cM') $ with strict support on $T$.}$$
\end{lemma}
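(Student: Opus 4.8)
The plan is to exploit Saito's structure theorem \cite[Theorem 3.21]{saito2}, which identifies $HM_S(S,w)$ (resp. $HM_T(T,w)$) with the $2$-colimit of categories of variations of Hodge structure of weight $w-\dim S$ (resp. $w-\dim T$) on Zariski-open smooth subsets. Since $f$ is finite surjective between irreducible varieties, $\dim S=\dim T$, so the two target weights for the VHS descriptions coincide. First I would choose a dense Zariski-open $V\subseteq T$ over which $f$ is \'etale; shrinking further, arrange that $U:=f^{-1}(V)$ and $V$ are smooth and that $f|_U\colon U\to V$ is a finite \'etale covering. On such open sets, pullback $f|_U^*$ and ``push-pull then take the maximal piece'' are classical operations on local systems/VHS, and the composites $f|_U^*\circ (f|_U)_*$ and $(f|_U)_*\circ f|_U^*$ contain the identity functor as a direct summand via the trace/averaging map (using that the characteristic is $0$, so dividing by $\deg f$ is allowed); this is where one sees that $f^\#$ and $f_\#$ are mutually quasi-inverse \emph{on the open sets}. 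The point is that for an \'etale cover, $f_*f^* \bH$ splits as $\bH^{\oplus \deg f}$-type decomposition whose $\bH$-isotypic summand (the ``maximal summand'') recovers $\bH$, and symmetrically.

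Next I would promote this to the level of Hodge modules on all of $S$ and $T$. Given $\cN\in HM_T(T,w)$, the pullback $f^*\cN$ is a complex of mixed Hodge modules; because $f$ is finite, $f^*=f^!$ up to a shift and twist and $\cH^0 f^*\cN$ is a mixed Hodge module whose weight-$w$ graded piece $Gr^W_w$ is pure. Its restriction to $U$ is $f|_U^*(\cN|_V)$, a VHS of weight $w-\dim S$; the maximal direct summand with strict support $S$ is then the unique pure Hodge module on $S$ extending this VHS, which exists and is unique by \cite[Theorem 3.21]{saito2} and Corollary \ref{cor:Ggen}. This defines $f^\#$. Dually, for $\cM'\in HM_S(S,w)$, exactness/properties of the direct image $f_+$ (proper, since finite) give that $\cH^0 f_+\cM'$ is a pure Hodge module of weight $w$ on $T$ whose restriction to $V$ is $(f|_V)_*(\cM'|_U)$; its maximal summand with strict support $T$ defines $f_\#\cM'$. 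Functoriality in both cases is inherited from functoriality of $f^*$, $f_+$ and of ``take the maximal strict-support-$T$ (resp. $S$) summand,'' the latter being a projection in the semisimple category $HM(T)$ (resp. $HM(S)$).

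Finally I would verify that $f^\#$ and $f_\#$ are mutually quasi-inverse. By \cite[Theorem 3.21]{saito2} (equivalently Corollary \ref{cor:Ggen}), a pure Hodge module strictly supported on an irreducible variety is determined by its restriction to any dense open subset; so it suffices to check $f_\# f^\#\cN|_V\cong \cN|_V$ and $f^\# f_\#\cM'|_U\cong \cM'|_U$, and these reduce exactly to the \'etale-cover statements about VHS established in the first step. One must also check these isomorphisms are natural, which again follows by restricting to $V$ (resp.\ $U$) and invoking faithfulness of restriction on morphisms between pure Hodge modules of strict support $T$ (resp.\ $S$).

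The main obstacle I anticipate is the bookkeeping around \emph{weights and mixedness}: $f^*\cN$ is a priori only a complex of mixed Hodge modules, so one must argue carefully that passing to $\cH^0$ and then $Gr^W_w$ (as in the statement) genuinely lands in the pure category and is compatible with restriction to the \'etale locus $U$; similarly one must confirm that the ``maximal summand with strict support'' operation is well-behaved (a direct-sum projection) and commutes with restriction. Everything else — the \'etale descent for VHS, the trace-map splitting, the uniqueness of intermediate extensions — is standard once the correct open sets are fixed. I would also remark that since $HM(S)$, $HM(T)$ are semisimple, one could alternatively phrase the whole argument representation-theoretically on the generic fibers without ever leaving the world of local systems, which may streamline the writeup.
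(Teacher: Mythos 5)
Your proposal follows essentially the same route as the paper's proof: both reduce via Saito's structure theorem \cite[Theorem 3.21]{saito2} to the dense open locus where $f$ is finite \'etale, use the trace/averaging splitting (dividing the adjunction composite by $\deg f$) to exhibit $\cN$ as a direct summand of $f_+f^*\cN$, and conclude by uniqueness of the strict-support extension; the weight bookkeeping you flag is handled in the paper by the one-line observation that $Gr^W_{w+1}(\cH^0(f^*\cN))=0$ by \cite[Prop.~2.26]{saito2}. The argument is correct and no essential step is missing.
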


\begin{proof}
First consider $\cN \in HM_{T}(T,w).$ Since the fibers of $f$ are zero dimensional, $\cH^{i}(f^{*}\cN) = 0$ for $i > 0$ \cite[ Prop. 8.1.40]{htt}. By Saito's theory of weights \cite[Prop. 2.26]{saito2}, $Gr^{W}_{w+1}(\cH^{0}(f^{*}\cN) )= 0$. We have a functor 
$$f^\#: HM_{T}(T,w) \rightarrow HM_{S}(S,w)$$ 
$$\cN \mapsto \cM = \text{maximal summand of $Gr^{W}_{w}(\cH^{0}(f^{*}\cN))$  with strict support on $S$.}$$ 
Now suppose that $\cM' \in HM_{S}(S,w).$ Since the map $f$ is finite, $f_{+}\cM' \simeq \cH^{0}(f_{+}\cM').$  There is a functor 
$$f_\#: HM_{S}(S,w) \rightarrow HM_{T}(T,w)$$ 
$$\cM' \mapsto \cN' = \text{maximal summand of $\cH^{0}(f_{+}\cM') $ with strict support on $T$.}$$
We will show that $f_\#\circ f^\#$ is isomorphic to the identity functor on $HM_{T}(T,w)$ and $f^\# \circ f_\#$ is isomorphic to the identity functor on $HM_{S}(S,w).$

By construction, there is a natural map $\cN \rightarrow (f_\# \circ f^\#) (N)$. Using \cite[Thm. 3.21]{saito2} and possibly restricting to a smooth Zariski-open subset of $T,$ we may assume $T$ is smooth, $\cN$ is a variation of Hodge structure, and the map $f$ is \'etale of degree $\ell.$ Let $\cL$ denote the underlying locally constant sheaf of $\cN$. The adjunction map
$$ad j_{*}: \cL \rightarrow \bR f_{*} f^{-1}\cL \simeq R^{0}f_{*} f^{-1}\cL$$
realizes $\cL$ as a direct summand of $R^{0}f_{*} f^{-1}\cL$ because the composition $\displaystyle \frac{1}{\ell} \cdot ad j_{!} \circ ad j_{*}$ is the identity map on $\cL,$ where $ad j_{!}$ is given by the map 
$$R^{0}f_{*} f^{-1}\cL \simeq \bR f_{!} f^{!}\cL \rightarrow \cL.$$
Therefore, the natural map $\cN \rightarrow f_{+}f^{*}\cN$ splits and $\cN$ is a direct summand of $f_{+}f^{*}\cN.$ Hence the natural map $\cN \rightarrow (f_\#\circ f^\#)(\cN)$ is an isomorphism.

For the other direction, there is a natural map $ (f^\# \circ f_\#)(\cM') \rightarrow \cM'.$  Again using \cite[Thm. 3.21]{saito2}, after possibly restricting to smooth Zariski-open subsets of $S$ and $T,$ we may assume $\cM'$ and $(f^\# \circ f_\#)(\cM')$ are variations of Hodge structure, and the map $f:S \rightarrow T$ is \'etale. But then by construction, the map $(f^\# \circ f_\#)(\cM') \rightarrow \cM'$ is a local isomorphism, and hence the map is an isomorphism.

\end{proof}



\begin{lemma}\label{lemma2}
    Suppose $f: S \rightarrow T$ is a surjective, finite map between smooth, irreducible $G$-varieties, and the map $f$ is equivariant. Then there is an equivalence of categories  $f^\#:HM_{S,G}(S,w)\approx HM_{T,G}(T,w).$
\end{lemma}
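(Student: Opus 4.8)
The plan is to show that the equivalence of Lemma~\ref{lemma1} lifts to equivariant Hodge modules, i.e. that $f^\#$ and $f_\#$ carry descent data to descent data compatibly with their unit and counit.

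First I would reduce to the case that $f$ is \'etale. Since $f$ is a dominant finite morphism of complex varieties, $\C(S)/\C(T)$ is a finite separable extension, so $f$ is \'etale over a dense open $V\subseteq T$; in particular the ramification locus $R\subset S$ is a closed, $G$-invariant (as $f$ is equivariant), proper subset. Then $Z:=f(R)$ is closed ($f$ being finite), $G$-invariant, and disjoint from $V$, hence proper in $T$, so $T_0:=T\backslash Z$ is a dense $G$-invariant open over which $f$ is finite \'etale, and $S_0:=f^{-1}(T_0)$ is a dense $G$-invariant smooth open of $S$. By Corollary~\ref{cor:Ggen}, restriction gives equivalences $HM_{S,G}(S,w)\approx HM_{S_0,G}(S_0,w)$ and $HM_{T,G}(T,w)\approx HM_{T_0,G}(T_0,w)$ intertwining $f^\#$ and $f_\#$ (using that \'etale pull-back and finite push-forward commute with the intermediate extension and with restriction to opens), so we may assume $f$ finite \'etale. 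In that case $\cH^0 f^*\cN=f^*\cN$ is already pure of weight $w$ with strict support the smooth irreducible $S$, so $f^\#=f^*$; dually $f_\#\cM'$ is the largest strict-support-$T$ summand of the finite push-forward $f_+\cM'$.

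Next I would use that equivariance of $f$ makes the squares
$$
\begin{tikzcd}
G\times S \arrow[r,"1\times f"] \arrow[d,"a"'] & G\times T \arrow[d,"a_T"] \\
S \arrow[r,"f"'] & T
\end{tikzcd}
$$
Cartesian for $a=p_2$ and $a=\alpha$ --- and likewise the squares attached to the maps $s$, $m$, $b$, $q$ introduced above and to all their composites --- with vertical maps smooth and horizontal maps finite \'etale. Two consequences drive the argument. For $f^\#=f^*$ one has $f\circ a=a_T\circ(1\times f)$, hence canonical identifications $a^*f^*\cN=(1\times f)^*a_T^*\cN$; applying $(1\times f)^*$ to a descent datum $\beta\colon p_2^*\cN\xrightarrow{\ \sim\ }\alpha^*\cN$ on $(\cN,\beta)\in HM_{T,G}(T,w)$ and inserting these identifications produces an isomorphism $\beta^\#\colon p_2^*f^\#\cN\xrightarrow{\ \sim\ }\alpha^*f^\#\cN$. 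For $f_\#$ one instead uses flat (here proper) base change $a_T^*f_+\cong(1\times f)_+a^*$ together with the standard compatibility of smooth pull-back with the decomposition by strict support, to turn a descent datum on $(\cM',\beta')\in HM_{S,G}(S,w)$ into one on $f_\#\cM'$.

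Finally I would check the cocycle condition for $\beta^\#$ (and its analogue for $f_\#$): it follows from the cocycle condition for $\beta$ by transporting that identity along the canonical isomorphisms around the Cartesian diagrams built from $s$, $m$, $b$, $q$, using pseudofunctoriality of pull-back and the compatibility of base change with pasting of Cartesian squares. This bookkeeping --- several squares must be tracked simultaneously --- is the step I expect to be the main obstacle; the rest is standard base change and formal manipulation. Granting it, $f^\#$ and $f_\#$ become functors between the equivariant categories whose action on morphisms respects descent data by naturality, and applying Lemma~\ref{lemma1} to $f$, to $1\times f$, and to $1\times1\times f$ (one connected component at a time if $G$ is disconnected) shows these functors are fully faithful and essentially surjective at the equivariant level, and that the isomorphisms $\mathrm{id}\cong f_\#f^\#$ and $f^\#f_\#\cong\mathrm{id}$ of Lemma~\ref{lemma1} are morphisms of equivariant Hodge modules. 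Hence $f^\#$ and $f_\#$ furnish the asserted equivalence $HM_{S,G}(S,w)\approx HM_{T,G}(T,w)$.
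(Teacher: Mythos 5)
Your argument is correct and rests on the same two pillars as the paper's: the commuting squares
$$
\begin{tikzcd}
G\times S \arrow[r,"1\times f"] \arrow[d] & G\times T \arrow[d] \\
S \arrow[r,"f"'] & T
\end{tikzcd}
$$
for both the projection and the action, plus the observation that $1\times f$ is again finite so that Lemma \ref{lemma1} applies on $G\times S\to G\times T$ (and on the triple products) to transport the descent datum and to see that the unit and counit of the nonequivariant equivalence are equivariant. Where you diverge is in how the key compatibility is implemented. The paper keeps $f$ arbitrary finite and pushes $\beta_T$ directly through the functor $\cN\mapsto Gr^W_w\cH^0(f^*\cN)$, using Saito's Lemma 2.25 and a spectral sequence to commute $\cH^d$ and $Gr^W$ past the smooth maps $p_S,\alpha_S$; you instead first shrink to dense $G$-invariant opens via Corollary \ref{cor:Ggen} over which $f$ is finite \'etale, so that $f^\#$ becomes plain pullback and the transport of descent data reduces to pseudofunctoriality of $(-)^*$ (and proper base change for $f_\#$). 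Your route buys a cleaner cocycle bookkeeping at the cost of having to justify that restriction to invariant opens intertwines $f^\#$ and $f_\#$ (true, since both functors are determined by their restriction to a common dense open by strict support) and that \'etale pullback preserves purity and strict support; the paper's route avoids the reduction but pays with the weight-graded/spectral-sequence manipulation. Both proofs leave the cocycle identity itself as a routine verification, so I see no gap beyond what the paper also elides.
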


\begin{proof}
    By the previous lemma, we already know there is an equivalence of categories between $HM_{S}(S,w)$ and $HM_{T}(T,w)$. We have to show this equivalence of categories is compatible with the
    data defining equivariant Hodge modules.  Using the notation of the previous proof, 
    we will show that the compatibility for the  functor $f^\#:HM_{T}(T,w) \rightarrow HM_{S}(S,w)$. The argument for the  inverse functor 
    $f_\#$ is similar.
    
        Since the map $f$ is equivariant, we have the following commutative diagrams
    $$\bT
    G \times S \arrow[r, "\alpha_{S}"] \arrow[d, "(id; f)" ] & S \arrow[d, "f"] \\
    G \times T \arrow[r, "\alpha_{T}"] & T \eT
    \quad \quad 
    \bT
    G \times S \arrow[r, "p_{S}"] \arrow[d, "(id ; f)" ] & S \arrow[d, "f"] \\
    G \times T \arrow[r, "p_{T}"] & T, \eT$$
   where $p$ is the projection map and $\alpha$ is the action map. Note that the map $(id; f): G \times S \rightarrow G \times T$ is finite, and hence there is an equivalence of categories between $HM_{G \times S}(G \times S)$ and $HM_{G \times T}(G \times T)$. If $\cN \in HM_{T,G}(T,w),$ then there is an isomorphism $\beta_{T}: p_{T}^{*}\cN \rightarrow \alpha^{*}_{T}\cN$ satisfying the cocycle condition. Therefore, there is a quasi-isomorphism
   $$ p_{S}^{*} f^{*} \cN \simeq (id; f)^{*} p_{T}^{*} \cN \rightarrow (id; f)^{*} \alpha_{T}^{*} \cN \simeq \alpha_{S}^{*} f^{*} \cN.$$
   The projection and action maps are smooth. Hence there are quasi-isomorphisms 
   $$p^{*}_{S}\cN \simeq \cH^{d}(p^{*}_{S}\cN)[-d] \quad \text{and} \quad \alpha^{*}_{S}\cN \simeq \cH^{d}(\alpha^{*}_{S}\cN)[-d],$$
   where $d= \dim G$ \cite[Lemma 2.25]{saito2}. Therefore, by the use of spectral sequences, we have an isomorphism
   $$\cH^{d}(p_{S}^{*}(\cH^{0}(f^{*}\cN))) \cong  \cH^{d}( p_{S}^{*} f^{*} \cN) \rightarrow \cH^{d}(\alpha_{S}^{*}f^{*}\cN) \cong \cH^{d}(\alpha_{S}^{*}(\cH^{0}(f^{*}\cN))).$$
   Also, by \cite[Lemma 2.25]{saito2},
   $$Gr^{W}_{w +d}\cH^{d}(p_{S}^{*}(\cH^{0}(f^{*}\cN))) \cong \cH^{d}(p_{S}^{*}(Gr^{W}_{w}\cH^{0}(f^{*}\cN))) $$
   $$ Gr^{W}_{w+d}\cH^{d}(\alpha_{S}^{*}(\cH^{0}(f^{*}\cN))) \cong \cH^{d}(\alpha_{S}^{*}(Gr^{W}_{w}\cH^{0}(f^{*}\cN))).$$
   Hence there is an isomorphism $\beta_{S}:p_{S}^{*}\cM \rightarrow \alpha_{S}^{*}\cM.$ The cocycle condition can be shown in a similar matter. Therefore, $\cM$ is an equivariant Hodge module.
\end{proof}



\begin{proof}[Proof of Proposition \ref{EquivProp}]
 Since $X^{s} \neq \emptyset,$ by Theorem \ref{thm:luna},  we have a diagram 
  $$
\xymatrix{
 G\times S\ar[r]^{r}\ar[d]^{p} & (G\times S)/H\ar[r]^{f}\ar[d] & X^s\ar[d]^{\pi}\\ 
 S\ar[r]^{q} & S/H\ar[r]^{h} & Y^s
}
 $$
satisfying the conditions stated in that theorem. There is also an equivalence of categories $$HM_{X^{s},G}(X^{s},w+d) \approx HM_{(G \times S)/H, G}((G\times S)/H, w +d)$$
by Lemma \ref{lemma2} because the map $(G \times S)/H \rightarrow X^{s}$ is $G$-equivariant. Note that the map $G \times S \rightarrow (G \times S)/H$ is $G$-equivariant because the $G$-action on  $(G \times S)/H$ is the natural action
$$g \cdot [(g',s)] =[(gg',s)]  \quad \text{for $g,g' \in G$ and $s \in S.$}$$ 
So, by applying Lemma \ref{lemma2}, we have an equivalence of categories 
$$HM_{G \times S, G}(G\times S, w+d) \approx  HM_{(G \times S)/H, G}((G\times S)/H, w +d).$$
Using a similar argument with Lemma \ref{lemma1}, there is an equivalence of categories 
$$HM_{Y}(Y,w) \approx HM_{S}(S,w).$$
With Proposition \ref{prop:freeHM} we   obtain the diagram
$$
\xymatrix{
 HM_{G\times S, G}(G\times S, w +d) &   & HM_{X^s,G}(X^s, w+d) \ar_{\approx}[ll]\\ 
 HM_{S}(S, w)\ar_{\approx}[u] & HM_{S/H}(S/H, w)\ar_{\approx}[l] & HM_{Y^s}(Y^s,w).\ar_{\approx}[l]\ar[u]
}
$$

 By Corollary \ref{cor:Ggen}, there are equivalences of categories 
$$j^*:HM_{X,G}(X,w+d) \approx HM_{X^{s},G}(X^{s},w+d),$$
$$k^*:HM_{Y}(Y,w) \approx HM_{Y^{s}}(Y^{s},w),$$
where $j:X^s\to X$ and $k: Y^s\to Y$ are the inclusions.
Then
\begin{equation}\label{eq:pidagger}
\pi^\dagger = j_{!*} f_\#r_\# p^t  q^\# h^\# k^* 
\end{equation}
gives the desired equivalence. One can check that
 $\pi^{\dagger}=\pi^t$  when $G$ acts freely on $X$ and $\pi=\pi^\#$ when $G$ is finite.
\end{proof}

\begin{rmk}
     The formula \eqref{eq:pidagger} is not easy to use directly.
     However, it is not hard to see that the functor $\pi^{\dagger}: HM_{Y}(Y,w) \rightarrow HM_{X,G}(X,w +d)$  is given by:
    $$\cN \mapsto \cM =\text{ the maximal summand of $Gr^{W}_{w +d}\cH^{d}(\pi^{*}\cN)$ with strict support on $X$.}$$
  \end{rmk}
   
 Here is a basic example.   
    
\begin{lemma}\label{lemma:piIC}
With the same assumptions as in Proposition \ref{EquivProp},  $\pi^\dagger IC_Y^H= IC_X^H$.
\end{lemma}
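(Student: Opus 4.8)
The plan is to use the characterization of $\pi^\dagger$ given in the preceding remark, together with the uniqueness of intersection cohomology Hodge modules (Saito's equivalence, \cite[Thm. 3.21]{saito2}). By Corollary \ref{cor:Ggen} and the compatibility of $\pi^\dagger$ with restriction to the stable locus, it suffices to check the statement over $Y^s = X^s/G$, i.e. to show that $\pi^\dagger$ sends the constant variation of Hodge structure on the smooth locus of $Y^s$ to the constant variation on the smooth locus of $X^s$. Indeed, both $IC_Y^H$ and $IC_X^H$ are by definition the $j_{!*}$-extensions of these constant variations, so matching them on a dense open set suffices.

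First I would reduce to a local computation via Luna's slice theorem (Theorem \ref{thm:luna}), exactly as in the proof of Proposition \ref{EquivProp}: over suitable \'etale neighborhoods we have the factorization of $\pi$ through $G\times S \to (G\times S)/H \to X^s$ and $S \to S/H \to Y^s$. The functors $r_\#$, $f_\#$ (finite \'etale, Lemma \ref{lemma1}, Lemma \ref{lemma2}) and $q^\#$, $h^\#$ all send constant variations of Hodge structure to constant variations of Hodge structure, since a finite \'etale pullback or pushforward of the constant sheaf $\Q$ contains $\Q$ as its maximal constant (equivalently, strictly-supported-on-the-base) summand. Likewise $p^t = p^*[\dim G]$ takes the constant VHS on $S/H$ (or rather its pullback) to the constant VHS on $G\times S$, by the compatibility of $\pi^t$ with pullback of variations of Hodge structure noted before Proposition \ref{prop:freeHM}. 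Finally $j_{!*}$ and $k^*$ are the intermediate-extension and restriction functors relating $HM_{X^s,G}$ to $HM_{X,G}$, which by construction carry $IC^H$ of the open part to $IC^H$ of the whole. Chaining these through the formula \eqref{eq:pidagger} gives $\pi^\dagger IC_Y^H = IC_X^H$.

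Alternatively, and perhaps more cleanly, I would argue directly from the remark's formula: $\pi^\dagger \cN$ is the maximal summand of $Gr^W_{w+d}\cH^d(\pi^*\cN)$ with strict support on $X$. Taking $\cN = IC_Y^H$, which restricts to the constant VHS $\Q_{Y^{sm}}^H[\dim Y]$ on the smooth locus, over the preimage of $Y^{sm}$ the map $\pi$ is a smooth morphism of relative dimension $d$ onto a smooth base, so $\pi^*IC_Y^H$ restricted there is $\Q^H[\dim Y + d] = \Q^H[\dim X]$ placed in the single cohomological degree coming from the shift, and $Gr^W_{w+d}\cH^d$ picks out precisely the constant VHS $\Q^H_{(\text{smooth locus of }X)}[\dim X]$. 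Its intermediate extension (= maximal strict-support-$X$ summand) is $IC_X^H$ by definition. The weight bookkeeping ($w = \dim Y$, $w+d = \dim Y + d = \dim X$) matches the weight of $IC_X^H$.

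The main obstacle I anticipate is purely the weight/strict-support bookkeeping: one must verify that after the $\cH^d$ and $Gr^W_{w+d}$ operations no spurious summands supported on proper subvarieties survive, and that the functors $q^\#, h^\#, f_\#, r_\#$ genuinely preserve the property of being the \emph{constant} variation (not merely some rank-one or geometrically trivial local system) — this uses that the relevant covers are connected, so that the trivial summand of a pushforward of $\Q$ is one-dimensional and canonically the constant sheaf. None of this is deep, but it requires care to state precisely; everything else is a formal consequence of results already established (Lemmas \ref{lemma1}, \ref{lemma2}, Proposition \ref{prop:freeHM}, Corollary \ref{cor:Ggen}) and the compatibility of $\pi^t$ with VHS pullback.
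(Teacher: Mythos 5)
Your proposal is correct and follows essentially the same route as the paper: restrict to dense smooth open subsets $U\subset X$, $V\subset Y$ over which $\pi^\dagger$ is just pullback of the constant variation of Hodge structure, and then invoke the uniqueness of the strict-support ($j_{!*}$) extension defining $IC^H_X$ and $IC^H_Y$. The paper's proof is exactly your first and third paragraphs compressed into two sentences; the Luna-slice chase in your second paragraph is unnecessary (and your parenthetical equating the ``maximal constant summand'' with the ``maximal strictly-supported-on-the-base summand'' of a finite pushforward is not literally correct, but it plays no role once you argue over the generic locus).
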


\begin{proof}
Choosing smooth Zariski open sets $U\subset X$ and $V\subset Y$ such that $\pi(U)\subset V$, the pullback of the constant variation of Hodge structure
$\pi^*\Q_V^H= \Q_U^H$. Since $IC_Y^H$ and $IC_X^H$ are the unique extensions of $\Q_V^H[\dim Y]$ and $ \Q_U^H[\dim X]$ to  Hodge modules with strict support on $Y$ and $X$,
we must have $\pi^\dagger IC_Y^H= IC_X^H$.
\end{proof}

    We will use $\pi^{G}_{+}: HM_{X,G}(X, w+d) \rightarrow HM_{Y}(Y,w)$ to denote the inverse functor of $\pi^{\dagger}.$ The next proposition gives an explicit description of $\pi^{G}_{+}.$

    \begin{prop}
        If $\cM \in HM_{X,G}(X, w +d),$ then
        $$\pi^{G}_{+}(\cM) = \text{ the maximal summand of $Gr^{W}_{w}\cH^{-d}(\pi_{+}\cM)$ with strict support on $Y$.}$$
    \end{prop}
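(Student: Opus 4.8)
The plan is to establish the formula by showing that the right-hand side is a well-defined functor $HM_{X,G}(X,w+d)\to HM_Y(Y,w)$ which is a two-sided inverse to $\pi^\dagger$, and then invoke uniqueness of inverse equivalences. Since $\pi^\dagger$ was shown in Proposition \ref{EquivProp} to be an equivalence, it suffices to exhibit \emph{one} quasi-inverse and check the proposed assignment equals it. The natural candidate is the composite read off from \eqref{eq:pidagger} in reverse, namely $k_{!*}\,h_\#\,q^\#\,p^t_{+}\,r^\#\,f_\#\,j^*$ (with the appropriate direct-image analogues), but rather than unwind that formula I would argue more conceptually: by Corollary \ref{cor:Ggen} everything reduces to the stable locus, and by Lemma \ref{lemma:dirfact}-type reasoning the maximal summand with correct strict support is detected generically.

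First I would reduce to the case of the geometric quotient $\pi:X^s\to Y^s$ using the equivalences $j^*:HM_{X,G}(X,w+d)\approx HM_{X^s,G}(X^s,w+d)$ and $k^*:HM_Y(Y,w)\approx HM_{Y^s}(Y^s,w)$ from Corollary \ref{cor:Ggen}: since $j_{!*}$ commutes with taking maximal summands of strict support (the intermediate extension of the maximal-strict-support summand over an open set \emph{is} the maximal-strict-support summand, by Lemma \ref{lemma:dirfact}), the formula for $\pi^G_+$ on $X$ follows from the formula on $X^s$. Second, on the geometric quotient, I would use Luna's slice diagram (Theorem \ref{thm:luna}) exactly as in the proof of Proposition \ref{EquivProp}, so that $\pi$ factors through free and finite pieces; for the free piece $G\times S\to S$ one has $\pi^t=\pi^*[d]$ with inverse $(\pi_*(-))^G$ on the level of underlying modules, and $\cH^{-d}(\pi_+\cM)$ computes exactly $(\pi_*M)^G$ up to a shift because $\pi$ is smooth of relative dimension $d$ and affine (using \cite[Lemma 2.25]{saito2} and exactness of $\pi_*$, $\pi^G_*$ from Lemma \ref{lemma:piGsplits}); for the finite pieces $f^\#$ and $f_\#$ are inverse by Lemma \ref{lemma1}, and $f_\#$ is literally defined as ``maximal summand of $\cH^0(f_+(-))$ with strict support,'' which is the shape we want.

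Third, I would verify that the composite of these descriptions collapses to the single clean formula. The key point is that $\pi_+\cM$ for the full quotient map $\pi:X^s\to Y^s$ can be computed by the Luna diagram, and the $G$-invariant part of the de Rham / underlying $\cD$-module picks out precisely $Gr^W_w\cH^{-d}(\pi_+\cM)$'s strict-support-$Y$ summand: the shift by $d$ is forced because $\pi$ has relative dimension $d$, the weight $w$ is forced because $\cM$ has weight $w+d$ and pushforward along a fibration drops weight by the relative dimension in the appropriate range, and taking the maximal summand with strict support on $Y$ discards the contributions of positive-dimensional fibers over the non-free locus (which are supported on proper subvarieties, as in the proof of Proposition \ref{prop:dirS}). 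Finally I would check the composite $\pi^G_+\circ\pi^\dagger\cong \mathrm{id}$ and $\pi^\dagger\circ\pi^G_+\cong\mathrm{id}$ by restricting to a common smooth open $U\subset X$ where both functors become (up to shift) the evident operations on variations of Hodge structure — here Proposition \ref{prop:GVHS} and Proposition \ref{prop:freeHM} make the check transparent — and then extend by $j_{!*}$.

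\textbf{The main obstacle} I anticipate is the bookkeeping of weights and shifts when passing through the Luna diagram: one must confirm that ``maximal summand of $Gr^W_{w+d}\cH^d(\pi^*\cN)$'' and ``maximal summand of $Gr^W_{w}\cH^{-d}(\pi_+\cM)$'' are genuinely adjoint/inverse and not off by a Tate twist or a reindexing, particularly because $\pi^*$ and $\pi_+$ interact with weights differently (pullback along a smooth map of relative dimension $d$ raises the relevant cohomological degree by $d$ and, after $Gr^W$, shifts weights by $d$ via \cite[Lemma 2.25]{saito2}, whereas the pushforward does the opposite). A secondary subtlety is ensuring that ``maximal summand with strict support on $Y$'' is well-defined and functorial — but this is exactly the content already used repeatedly (Lemmas \ref{lemma:dirfact}, \ref{lemma1}), so it should go through without new input.
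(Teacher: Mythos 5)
Your overall architecture matches the paper's: handle the finite case via Lemma \ref{lemma1} (where $f_\#$ is by definition ``maximal summand of $\cH^0(f_+(-))$ with strict support''), restrict to the geometric quotient, and use Luna's slice theorem together with Lemma \ref{lemma2} to reduce to the case of a trivial bundle $G\times S\to S$. Up to that point the reduction is sound and is essentially what the paper does (the paper phrases the reduction as ``we may reduce to the case $X=G\times Y$'' rather than via Corollary \ref{cor:Ggen}, but these are the same move).

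The gap is in your treatment of the free piece. You assert that $\cH^{-d}(\pi_+\cM)$ ``computes exactly $(\pi_*M)^G$ up to a shift because $\pi$ is smooth of relative dimension $d$ and affine,'' invoking exactness of $\pi_*$ and $\pi_*^G$ from Lemma \ref{lemma:piGsplits}. But those are statements about the coherent $\cO$-module pushforward, whereas $\pi_+$ here is the Hodge-module (i.e.\ $\cD$-module) direct image along the non-proper projection $G\times S\to S$: its cohomologies are computed by the relative de Rham complex, so $\cH^{j}(\pi_+\pi^\dagger\cN)$ involves the de Rham cohomology of the fiber $G$, not $G$-invariants of a sheaf pushforward. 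The identification $\cH^{-d}(\pi_+(\cH^d\pi^*\cN))\cong\cN$ in this case is a genuine computation; the paper obtains it by introducing the unit section $s:Y\to G\times Y$, noting $s^\dagger$ inverts $\pi^\dagger$, and citing the proof of \cite[Lemma 2.27]{saito2} for the isomorphism $\cH^{-d}(\pi_+(\cH^d\pi^*\cN))\to\cH^0 s^\dagger\pi^\dagger\cN=\cN$. Without this (or an equivalent computation of the bottom de Rham cohomology of the fibers, using connectedness of $G$ and the weight truncation $Gr^W_w$ to kill the higher $H^\ast_{dR}(G)$ contributions), your argument for the free piece does not close. Everything else in your outline is consistent with the paper, so repairing this single step would complete the proof.
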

    \begin{proof}
        If $G$ is finite, then the proposition follows from Lemma \ref{lemma1}. So, we may assume $\dim G \geq 1$ and $G$ is connected. After possibly restricting to open subsets of $X$ and $Y$, we may assume $\pi:X \rightarrow Y$ is a geometric quotient. By Luna's theorem \ref{thm:luna},  we have the diagram 
  $$
\xymatrix{
 G\times S\ar[r]^{r}\ar[d] & (G\times S)/H\ar[r]\ar[d] & X^s\ar[d]^{\pi}\\ 
 S\ar[r]^{q} & S/H\ar[r] & Y^s
}
 $$
satisfying the assumptions stated in that theorem. After possibly restricting further to open subsets of $X$ and $Y,$ we may assume the maps
$$S/H \rightarrow Y \quad \text{and} \quad  (G \times S)/H \rightarrow X$$
are surjective. Hence we may asssume the maps $G \times S \rightarrow X$ and $S \rightarrow Y$ are surjective and finite. Therefore, by Lemma \ref{lemma1} and Lemma \ref{lemma2}, we may reduce to the case when $X = G \times Y$ and $\pi:X \rightarrow Y$ is the natural projection. Let $s:Y \rightarrow X$ be the natural map given by 
$$y \mapsto (e, y) \quad \text{ $e \in G$ is the identity element.}$$
We have $\pi \circ s= id_{Y}$, and $s^{\dagger}: HM_{X,G}(X, w+d) \rightarrow HM_{Y}(Y,w)$ is the inverse functor for the equivalence of categories. For $\cN \in HM_{Y}(Y,w)$ it was shown in the proof of \cite[Lemma 2.27]{saito2} that the map
$$\cH^{-d}(\pi_{+}(\cH^{d}\pi^{*}\cN)) \rightarrow \cH^{0}s^{\dagger}\pi^{\dagger}\cN = \cN$$
is an isomorphism.
    \end{proof}
    
    \begin{cor}\label{cor:proj}
        If  $\cM \in HM_{X,G}(X, w +d),$ then
        $$\pi^{G}_{+}(\cM) = \text{the maximal summand of $\cH^{-d}(\Bar{\pi}_{+}\Bar{\cM})$ with strict support on $Y$,}$$
        where $\Bar{\pi}: \Bar{X} \rightarrow Y$ is any compactification of $\pi,$ and $\Bar{\cM} \in HM_{\Bar{X}}(\Bar{X},w+d)$ is the unique extension of $\cM.$ 
    \end{cor}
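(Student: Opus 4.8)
The plan is to reduce the statement about the quotient map $\pi: X \to Y$ to the already-established description of $\pi^G_+$ in terms of $\cH^{-d}\pi_+$, by comparing $\pi_+$ with $\bar\pi_+$ for a chosen compactification. First I would fix a compactification $\bar\pi: \bar X \to Y$ of $\pi$; that is, an open immersion $X \hookrightarrow \bar X$ with $\bar X \to Y$ proper and restricting to $\pi$ on $X$. Such a $\bar X$ exists by Nagata compactification (relative version), and after taking closures we may assume $\bar X$ is irreducible of the same dimension as $X$. Let $j: X \hookrightarrow \bar X$ denote the open immersion, and let $\bar\cM = j_{!*}\cM \in HM_{\bar X}(\bar X, w+d)$ be the intermediate extension, which is the unique Hodge module with strict support on $\bar X$ restricting to $\cM$ on $X$ (using Corollary \ref{cor:Ggen} applied on the open set $X \subset \bar X$).

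The key step is to show that the maximal summand of $\cH^{-d}(\bar\pi_+\bar\cM)$ with strict support on $Y$ coincides with the maximal summand of $\cH^{-d}(\pi_+\cM)$ with strict support on $Y$. Since $\bar\pi$ is proper we have $\bar\pi_+\bar\cM$ well-defined, and the claim is that the ``new'' contributions coming from the boundary $\bar X \setminus X$ do not contribute a summand with full support $Y$. More precisely, if $i: Z = \bar X \setminus X \hookrightarrow \bar X$ is the boundary (a proper closed subset), one has the standard triangle $i_+ i^! \bar\cM \to \bar\cM \to j_+ j^* \bar\cM = j_+\cM \xrightarrow{+1}$, and applying $\bar\pi_+$ together with $\bar\pi_+ j_+ = \pi_+$ gives a triangle relating $\bar\pi_+\bar\cM$ and $\pi_+\cM$ whose third term is supported on $\bar\pi(Z)$. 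If $\bar\pi(Z) \subsetneq Y$, the associated long exact sequence of perverse cohomologies shows $\cH^{-d}(\bar\pi_+\bar\cM)$ and $\cH^{-d}(\pi_+\cM)$ have the same maximal summand with strict support $Y$, and we are done by the previous Proposition. The subtlety is that $\bar\pi(Z)$ need not be a proper subset of $Y$ in general; to handle this I would instead work with the decomposition theorem: $\bar\pi_+\bar\cM$, being the pushforward under a projective morphism of a pure Hodge module, splits as a direct sum of shifts of pure Hodge modules, and the summand of $\cH^{-d}$ with strict support $Y$ is (generically on $Y$) determined by the restriction of $\bar\pi_+\bar\cM$ over a dense open $V \subset Y$ over which both $\pi$ and $\bar\pi$ are nice; but over such $V$, after shrinking so that $\bar\pi^{-1}(V) \subset X$ (possible since $X$ is dense in $\bar X$ over $Y$ — here one uses that $X \hookrightarrow \bar X$ is dense and $\bar\pi$ proper, so $\bar X \setminus X$ maps to a proper closed subset of the image, hence $\bar\pi^{-1}(V) = \pi^{-1}(V)$ for $V$ avoiding that subset), the two pushforwards agree. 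Since a Hodge module with strict support $Y$ is determined by its restriction to any dense open, the maximal such summands of $\cH^{-d}(\bar\pi_+\bar\cM)$ and $\cH^{-d}(\pi_+\cM)$ agree.

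Concretely, I would carry out the steps in this order: (1) construct the compactification $\bar\pi: \bar X \to Y$ and set $\bar\cM = j_{!*}\cM$; (2) observe $\bar\cM|_X = \cM$ and hence $\bar\cM$ is the unique extension claimed in the statement; (3) choose a dense Zariski-open $V \subseteq Y$ such that $\bar\pi^{-1}(V) = \pi^{-1}(V) =: X_V$, using density of $X$ in $\bar X$ over $Y$; (4) note $\bar\pi_+\bar\cM |_V = \pi_+\cM|_V$ since over $V$ the two morphisms literally coincide on the relevant open set and $\bar\cM|_{X_V} = \cM|_{X_V}$; (5) conclude that the maximal summand of $\cH^{-d}(\bar\pi_+\bar\cM)$ with strict support $Y$ restricts over $V$ to the maximal summand of $\cH^{-d}(\pi_+\cM)$ with strict support $Y$, hence the two summands are equal by the uniqueness of extensions with strict support (Corollary \ref{cor:Ggen}); (6) invoke the previous Proposition identifying the latter with $\pi^G_+(\cM)$.

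The main obstacle I expect is step (3)–(4): making rigorous that the boundary $\bar X \setminus X$ maps into a proper closed subset of $Y$, i.e.\ that $\pi$ and $\bar\pi$ agree over a dense open of $Y$. This requires knowing that $X$ is \emph{dense} in $\bar X$ relative to $Y$ in the appropriate sense; it is true when one takes $\bar X$ to be the closure of $X$ in some relative compactification (so that $\bar X$ is irreducible with $X$ dense), but one must also ensure $\bar\pi$ remains surjective with the generic fiber of $\bar\pi$ equal to the generic fiber of $\pi$. Granting this geometric input, the Hodge-theoretic part is formal: it is just the statement that a pure Hodge module with strict support on an irreducible variety is determined by its restriction to a dense open subset, which is exactly Corollary \ref{cor:Ggen} (equivalently Saito's \cite[Theorem 3.21]{saito2}).
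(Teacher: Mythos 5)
There is a genuine gap, and it sits exactly where you flagged the ``subtlety'': steps (3)--(4). You need a dense open $V\subseteq Y$ with $\bar\pi^{-1}(V)=\pi^{-1}(V)$, i.e.\ you need the boundary $Z=\bar X\setminus X$ to map into a \emph{proper} closed subset of $Y$. This is false whenever $\dim G\ge 1$: the generic fibre of $\pi$ is (up to a finite group) a $d$-dimensional $G$-orbit, which is affine and hence not proper, so the fibres of $\bar\pi$ over a dense open of $Y$ must acquire boundary points, and $\bar\pi(Z)=Y$. (Already for $X=G\times Y\to Y$ with $G=\C^{*}$ and $\bar X=\PP^{1}\times Y$, the boundary is $\{0,\infty\}\times Y$, which surjects onto $Y$.) Your proposed fix re-asserts the very claim at issue (``$X\hookrightarrow\bar X$ is dense and $\bar\pi$ proper, so $\bar X\setminus X$ maps to a proper closed subset of the image''), so it is circular: density of $X$ in $\bar X$ controls $\dim Z$, not $\dim\bar\pi(Z)$, and here $\dim Z=\dim X-1\ge\dim Y$. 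Once (3) fails, (4) fails with it: $\bar\pi_{+}\bar\cM|_{V}$ and $\pi_{+}\cM|_{V}$ are genuinely different objects (the former pushes forward along the compactified fibres), so their strict-support-$Y$ summands cannot be identified by restriction alone. A smaller point: the preceding Proposition identifies $\pi^{G}_{+}\cM$ with the maximal strict-support-$Y$ summand of $Gr^{W}_{w}\cH^{-d}(\pi_{+}\cM)$, not of the mixed module $\cH^{-d}(\pi_{+}\cM)$ itself; your argument elides the weight-graded step.

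The paper's proof avoids the geometric claim entirely. It uses the adjunction $\bar\cM\to j_{+}j^{*}\bar\cM$ to produce a natural map $\cH^{-d}(\bar\pi_{+}\bar\cM)\to\cH^{-d}(\pi_{+}\cM)$, and the weight spectral sequence \cite[Prop.~2.15]{saito2} to see that the induced map onto $Gr^{W}_{w}\cH^{-d}(\pi_{+}\cM)$ is surjective. Since $\bar\pi$ is projective and $\bar\cM$ is pure, the source is a pure, hence semisimple, Hodge module of weight $w$, so the surjection splits and exhibits $\pi^{G}_{+}\cM$ as a summand of $\cH^{-d}(\bar\pi_{+}\bar\cM)$. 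If you want to salvage your approach, the comparison over a dense $V\subseteq Y$ must be routed through this adjunction map rather than through an equality of the two pushforwards.
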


    \begin{proof}
        Compactify the morphism $\pi$ to a projective morphism $\bar{\pi}:\Bar{X}\rightarrow Y$ such that the diagram commutes
    $$\bT X \arrow[r, hook, "j"] \arrow[dr, "\pi"] & \Bar{X} \arrow[d, "\Bar{\pi}"] \\
    & Y. \eT$$
    There is a natural map
    $$\cH^{-d}(\Bar{\pi}_{+}\Bar{\cM}) \rightarrow Gr^{W}_{w}\cH^{-d}(\pi_{+}\cM).$$
    Furthermore, this map is surjective by the weight spectral sequence \cite[Prop. 2.15]{saito2}. Which implies $\pi^{G}_{+}(\cM)$ is a direct summand of $\cH^{-d}(\Bar{\pi}_{+}\Bar{\cM})$.
    \end{proof}

    \begin{lemma}
    If $f:S \rightarrow T$ is a surjective finite map between irreducible algebraic varieties, then $\cM \in HM_{S}(S,w)$ has RS if and only if $\cN \in HM_{T}(T,w)$ has RS under the equivalence of categories.
\end{lemma}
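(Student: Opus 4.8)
The plan is to reduce the statement to Theorem \ref{thm:saito3}, using the single extra input that a finite morphism induces an exact and faithful (hence conservative) derived pushforward.

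First I would fix notation: set $n=\dim S=\dim T$ (equal since $f$ is finite and surjective), let $\cN=f_\#\cM\in HM_T(T,w)$, so that $\cM=f^\#\cN$ under the equivalence of Lemma \ref{lemma1}. Since $f$ is finite, $f_+\cM$ is concentrated in degree $0$, and $\cN$ is by construction the direct factor of $\cH^0(f_+\cM)$ with strict support $T$; thus in the language of Theorem \ref{thm:saito3} applied to $f$ (here $d=\dim S-\dim T=0$) we have $\cN=\cM^0_T$, and $\cN\neq 0$ because $f$ is surjective. Being finite, $f$ is affine, so $\bR f_*=f_*$, and $f_*$ is exact and faithful on coherent sheaves (a coherent sheaf and its pushforward have the same image of support, and $f$ has finite fibers); hence $f_*$ is exact and conservative on $D^b_{coh}$, i.e. $\cH^j(\bR f_*K)=f_*\cH^j(K)$, and $\bR f_*K$ vanishes, resp. is concentrated in a single degree, iff $K$ does.

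Next I would feed $\cM$ into the second quasi-isomorphism of Theorem \ref{thm:saito3}:
$$\bR f_*Q_S(\cM)\simeq\bigoplus_{q'(\cM^i_T)=q'(\cM)}Q_T(\cM^i_T)[-i].$$
By finiteness the only index that can occur is $i=0$, so the right-hand side equals $Q_T(\cN)$ if $q'(\cN)=q'(\cM)$ and is $0$ otherwise. But $Q_S(\cM)\neq 0$: indeed $S_S(\cM)=F_{p(\cM)}M\neq 0$, and $Q_S(\cM)=\cD(S_S(\cM))\neq 0$ by Remark \ref{rmk:polar}. Conservativity of $f_*$ then forces the numerical condition $q'(\cN)=q'(\cM)$ to hold, and we obtain
$$f_*Q_S(\cM)=\bR f_*Q_S(\cM)\simeq Q_T(\cN).$$
Finally, $\cM$ has RS iff $Q_S(\cM)$ is concentrated in degree $-n$; by the displayed quasi-isomorphism and conservativity of $f_*$ (applied to the cohomology sheaves $\cH^j Q_S(\cM)$), this is equivalent to $Q_T(\cN)$ being concentrated in degree $-n$, i.e. to $\cN$ having RS, since $\dim S=\dim T=n$.

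The step I expect to require the most care is showing that the sum in Theorem \ref{thm:saito3} collapses to the single term $Q_T(\cN)$: this needs both that $f_+\cM$ sits in degree $0$ and that $q'(\cM^0_T)=q'(\cM)$, and I extract the latter from nonvanishing plus conservativity of $f_*$ rather than computing $p(\cM)$ and $p(\cN)$ directly. As a variant one could instead use the $S$-version of Theorem \ref{thm:saito3} to get $f_*S_S(\cM)\cong S_T(\cN)$ and then invoke, via Lemma \ref{lemma:SCM}, the standard fact that Cohen--Macaulayness of pure dimension $n$ is preserved and reflected by pushforward along a finite morphism; I mention this for completeness but prefer the $Q$-route above because it needs no commutative algebra beyond exactness and faithfulness of $f_*$.
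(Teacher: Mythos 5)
Your proof is correct and follows essentially the same route as the paper: apply Theorem \ref{thm:saito3} to the finite morphism $f$ to get $f_*Q_S(\cM)\simeq Q_T(\cN)$, then use exactness and faithfulness (conservativity) of $f_*$ to transfer the condition of being concentrated in degree $-n$ in both directions. The only difference is that you carefully justify the collapse of the direct sum to the single term $Q_T(\cN)$ (deducing $q'(\cN)=q'(\cM)$ from nonvanishing plus conservativity), a point the paper's proof asserts without comment.
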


\begin{proof}
    The morphism $f:S \rightarrow$ is projective since it is finite. By Theorem \ref{thm:saito3}, and using $f_{*}$ is exact, 
    $$\bR f_{*}Q_{S}(\cM) \simeq f_{*}Q_{S}(\cM) \simeq Q_{T}(\cN).$$
    So, if $\cM$ has RS, then it is clear that $\cN$ must also have RS. Similarly, since $f:S \rightarrow T$ is affine and $f_{*}$ is exact, if $\cN$ has RS then $\cM$ must also have RS.

\end{proof}

\begin{lemma}\label{lemma:piG}
 Let $G$ be a reductive group of dimension $d$ and $\pi:X\to X/G=Y$ a geometric quotient of a smooth variety. If $\cM\in HM_{X,G}(X)$,
  then $\cM$ has rational singularities if and only if $\pi^{G}_{+}\cM$ has rational singularities. In particular, if $\cM$ or $\pi^{G}_{+}\cM$ has rational singularities, then $Q(\pi_+^G \cM)[d] \cong \pi_*^G Q( \cM)$ 
and $ S(\pi_+^G \cM)\cong \pi_*^G S( \cM)$. 
\end{lemma}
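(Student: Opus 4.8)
The plan is to reduce, via Luna's theorem, to a free action and then argue by direct base change. I will use throughout that $S_{(-)}$, $Q_{(-)}$, the property RS (which by Lemma~\ref{lemma:SCM} is the Cohen--Macaulayness of the sheaf $S_{(-)}$), the functor $\pi^{G}_{*}$, and the functor $\pi^{\dagger}$ are compatible with \'etale base change on $Y$: for $\pi^{G}_{*}$ because taking $G$-invariants of a rational $G$-module is exact and commutes with flat base change, and for the others because they are built from \'etale-local operations. Hence the RS equivalence and the two isomorphisms may be checked after an \'etale surjection onto $Y$. We may also take $X$ irreducible, and, by splitting off the finite group $G/G^{0}$ via the $d=0$ case of the preceding (finite-map) lemma applied to $X/G^{0}\to Y$, assume $G$ is connected (so, being reductive, unimodular).

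Since $\pi$ is a geometric quotient, Theorem~\ref{thm:luna} identifies it, \'etale-locally on $Y$, with $(G\times S)/H\to S/H$ for some finite $H\subset G$ and some smooth irreducible $H$-variety $S$. Exactly as in the proof of Proposition~\ref{EquivProp}, the finite $G$-equivariant quotient $r\colon G\times S\to(G\times S)/H$, the finite map $q\colon S\to S/H$, and the projection $\mathrm{pr}\colon G\times S\to S$ --- on which $G$ acts freely --- fit into a chain of equivalences of categories, obtained by applying Lemma~\ref{lemma1} to $q$, Proposition~\ref{prop:freeHM} to $\mathrm{pr}$, and Lemma~\ref{lemma2} to $r$, whose composite $HM_{Y}(Y,w)\approx HM_{X,G}(X,w+d)$ is the functor $\pi^{\dagger}$. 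Writing $\tilde\cN\in HM_{S}(S,w)$ for the image of $\cN=\pi^{G}_{+}\cM$ along this chain, the image of $\cM$ in $HM_{G\times S,G}(G\times S,w+d)$ is $\mathrm{pr}^{t}\tilde\cN=\mathrm{pr}^{*}\tilde\cN[d]$, and it suffices to verify the three assertions across each link.

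The free link is the heart of the matter. Here $\mathrm{pr}$ is smooth of relative dimension $d$, and its relative canonical bundle is $G$-equivariantly trivial because $\mathrm{pr}$ is a principal $G$-bundle and $G$ is unimodular; consequently the explicit form of the Hodge filtration under the smooth pullback $\mathrm{pr}^{t}$ gives $S_{G\times S}(\mathrm{pr}^{t}\tilde\cN)\cong\mathrm{pr}^{*}S_{S}(\tilde\cN)$, and dually --- using $\omega^{\bullet}_{G\times S}\cong\mathrm{pr}^{!}\omega^{\bullet}_{S}$ and $\cD=\bR\cH om(-,\omega^{\bullet})$ --- $Q_{G\times S}(\mathrm{pr}^{t}\tilde\cN)\cong\mathrm{pr}^{*}Q_{S}(\tilde\cN)[d]$. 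Since $\mathrm{pr}$ is affine, $(\mathrm{pr}_{*}\OO_{G\times S})^{G}=\OO_{S}$ (immediate from $\OO(G)^{G}=\C$), and the Reynolds projector is $\OO_{S}$-linear and exact, the projection formula gives $\mathrm{pr}^{G}_{*}S_{G\times S}(\mathrm{pr}^{t}\tilde\cN)\cong S_{S}(\tilde\cN)$ and $\mathrm{pr}^{G}_{*}Q_{G\times S}(\mathrm{pr}^{t}\tilde\cN)\cong Q_{S}(\tilde\cN)[d]$, the two desired isomorphisms for this link. Finally $\mathrm{pr}$ is faithfully flat with smooth --- hence Cohen--Macaulay --- fibres, so $\mathrm{pr}^{*}S_{S}(\tilde\cN)$ is Cohen--Macaulay of pure dimension $\dim(G\times S)$ iff $S_{S}(\tilde\cN)$ is Cohen--Macaulay of pure dimension $\dim S$; by Lemma~\ref{lemma:SCM}, $\mathrm{pr}^{t}\tilde\cN$ has RS iff $\tilde\cN$ does.

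For the two finite links $r$ and $q$, Theorem~\ref{thm:saito3} (as used in the preceding lemma) shows that $\bR r_{*}$ and $\bR q_{*}$ carry $S$ and $Q$ of a Hodge module to $S$ and $Q$ of its image under the equivalence, and that RS is preserved both ways. Combining this with $\pi\circ r=q\circ\mathrm{pr}$, with the facts that finite pushforward preserves Cohen--Macaulayness (depth is insensitive to finite base extensions) and \'etale pullback both preserves it and commutes with $S$, and with the observation that $G$ acts trivially on $S$ and $S/H$ so that forming $G$-invariants commutes with $\bR q_{*}$, one obtains the RS equivalence and the two isomorphisms for $(G\times S)/H\to S/H$, hence --- by the \'etale-local reduction --- for $\pi$. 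The step I expect to be the genuine obstacle is the smooth base-change identity $S_{G\times S}(\mathrm{pr}^{t}\tilde\cN)\cong\mathrm{pr}^{*}S_{S}(\tilde\cN)$ and its $Q$-analogue: proving it means entering the Hodge-module formalism and checking that the minimal nonzero step of the Hodge filtration of $\mathrm{pr}^{t}\tilde\cN$ is $\mathrm{pr}^{*}$ of that of $\tilde\cN$ up to the relative-dimension shift --- most transparently by restricting to a dense open where $\tilde\cN$ is a variation of Hodge structure, invoking Koll\'ar's formula $S(X,\bH)=\omega_{X}\otimes{}^{u}\cF^{b}(\bH)$ recalled in the first section, and then extending. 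Everything else --- the compatibility of $\pi^{\dagger}$ with Luna's diagram and with $S$, $Q$, $\pi^{G}_{*}$ and RS, together with the commutative-algebra statements about flat and finite maps --- is routine bookkeeping of the kind already carried out in the proof of Proposition~\ref{EquivProp}.
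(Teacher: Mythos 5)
Your proof is correct and follows essentially the same route as the paper: Luna's slice theorem to reduce to the local model, the finite-map equivalences to pass between $Y$ and $S$ and between $X$ and $G\times S$, the explicit pullback formulas $S_{G\times S}(\mathrm{pr}^{*}\tilde\cN[d])\cong\mathrm{pr}^{*}S_{S}(\tilde\cN)$ and its $Q$-analogue for the free projection, faithful flatness for the RS equivalence, and the Reynolds-operator splitting for the invariants. The one step you flag as a potential obstacle is exactly what the paper handles by citing the explicit formulas in \cite[\S 30]{schnell}, so no new idea is needed there.
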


\begin{proof}
Recall, by Theorem \ref{thm:luna}, for $x \in X$ and $\pi(x) \in Y$ there exists open neighborhoods $x \in U \subset X$ and $\pi(x) \in V \subset Y$ such that we have the commutative diagram
$$
\xymatrix{
 G\times S\ar[r]\ar[d]^{\Pi} & (G\times S)/H\ar[r]^>>>>>{p}\ar[d] & U \ar[r] \ar[d]^{\pi\vert_{U}} & X\ar[d]^{\pi} \\ 
 S\ar[r] & S/H\ar[r]^{p} & V \ar[r] &Y.
}
 $$
The variety $S$ is smooth, and the maps $p$ are \'etale and surjective. Since the condition of rational singularities is local, we may replace $X$ with $U$ and $Y$ with $V$. Hence we may assume the induced maps
$$\alpha: G \times S \rightarrow X \quad \text{ and} \quad \beta: S \rightarrow Y$$
are finite and surjective. Let $\cN = \pi^{G}_{+}\cM.$ Recall from Lemma \ref{lemma1} there exists an equivalence of categories between $HM_{Y}(Y,w)$ and $HM_{S}(S,w).$ Let $\cN' \in HM_{S}(S)$ be the Hodge module that corresponds to $\cN.$ Now, because the map  $S \rightarrow Y$ is finite, $\cN$ has RS if and only if $\cN'$ has RS. Similarly, let $\cM' \in HM_{G\times S, G}(G \times S)$ be the Hodge module that corresponds to $\cM$ from Lemma \ref{lemma2}. Note that $\cM$ has RS if and only $\cM'$ has RS. By construction, we have
$$ \cM' = \Pi^{*}\cN'[d].$$
The explicit formulas in \cite[\S30]{schnell} show that 
$$Q_{G \times S}(\cM') = \Pi^*Q_{S}(\cN')[d]$$
and
$$S_{G \times S}(\cM') = \Pi^*S_{S}(\cN').$$
Since $\Pi:G \times S \rightarrow S$ is faithfully flat, we see that $\cM'$ has RS if and only if $\cN'$ has RS. Hence $\cM$ has RS if and only if $\cN$ has RS. 

When $\cM'$ or $\cN'$ has RS, we have
\begin{equation}\label{eq:PiG}
\begin{split}
 Q_{S}(\cN')[d] &\cong \bR \Pi^{G}_{*}Q_{G \times S}(\cM')= \Pi^{G}_{*}Q_{G \times S}(\cM')\\
 S_{S}(\cN') &\cong \bR \Pi^G_*S_{G \times S}(\cM') = \Pi^G_*S_{G \times S}(\cM')
\end{split}
\end{equation}
because  $G$ acts freely on $S$  and we may apply Proposition \ref{prop:free}.  The second set of equalities, which involves a slight abuse of notation,
stems from the fact that $\Pi^G_*$ is exact on the category of coherent sheaves by Lemma \ref{lemma:piGsplits}.
Now it is well known that if $S$ is an affine regular $\C$-domain, then the ring of $G$-invariants $S^{G}$ is a direct summand of $S$ as a $S^{G}$-module \cite{hr}. The  sheaves  (up to translation) $Q_{S}(\cN')$, $Q_{G \times S}(\cM')$, $S_{S}(\cN')$ and $S_{G \times S}(\cM')$ are locally free  by Corollary \ref{locallyfree}, because $S$ and $G \times S$ are smooth. By \eqref{eq:PiG}, $Q_{S}(\cN')[d]$  is  a direct summand of $\Pi_{*}Q_{G \times S}(\cM').$ Let
$$\Pi_{*}Q_{G \times S}(\cM') = \Pi^{G}_{*}Q_{G \times S}(\cM') \oplus A[\dim X] = Q_{S}(\cN')[d] \oplus A [\dim X]$$
for some coherent $\cO_{S}$-module $A$ by Lemma \ref{lemma:piGsplits}. Then
$$\pi_{*}Q_{X}(\cM) \cong \pi_{*}\alpha_{*}Q_{G \times S}(\cM') \cong  \beta_{*}\Pi_{*}Q_{G \times S}(\cM') \cong   \beta_{*}\Pi^{G}_{*}Q_{G \times S}(\cM') \oplus \beta_{*}A[\dim X]$$
$$\cong \beta_{*}Q_{S}(\cN')[d] \oplus \beta_{*}A[\dim X] \cong Q_{Y}(\cN)[d] \oplus \beta_{*}A[\dim X].$$
Therefore, when taking $G$-invariants, we obtain
$$Q_{Y}(\cN)[d] \cong  \pi^{G}_{*}Q_{X}(\cM).$$
By a similar argument,
$ S(\pi_+^G \cM) \cong \pi_*^G S( \cM)$. 
\end{proof}

\begin{thm}\label{thm:main}
     Let $G$ be a $d$-dimensional reductive algebraic group  with an effective action on a smooth affine variety $X$ such that $X^s\not=\emptyset$.  Let $\pi:X\to Y= X//G$ be the quotient map. 
     Suppose that  $\cM \in HM_{X,G}(X,w+d)$ has rational singularities, then $\cN = \pi^{G}_{+}\cM \in HM_{Y}(Y,w)$ has rational singularities, and  
     $$Q_{Y}(\cN)[d] \cong (\pi_{*}Q_{X}(\cM))^{G}$$
     $$S_{Y}(\cN) \cong (\pi_{*}S_{X}(\cM))^{G}.$$
\end{thm}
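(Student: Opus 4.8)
The plan is to combine three ingredients: the geometric‑quotient case of Lemma~\ref{lemma:piG}, the structure of $S_X(\cM)$ and $Q_X(\cM)$ as (shifts of) locally free sheaves on the smooth $X$, and Saito's Koll\'ar‑type theorem of \S1 (the theorem following Corollary~\ref{cor:rs}), which will play the role that Grauert--Riemenschneider vanishing plays in Boutot's original argument \cite{boutot}. First, some formal observations. Since $X$ is affine, $\pi$ is an affine morphism, so $\pi_*$ is exact; since $G$ is reductive, $(\,\cdot\,)^G$ is exact and realises its output as a direct summand of its input (Lemma~\ref{lemma:piGsplits}); and since $\cM$ has RS on the smooth $X$, Corollary~\ref{locallyfree} gives $S_X(\cM)=E$ and $Q_X(\cM)=E'[\dim X]$ for locally free sheaves $E,E'$ on $X$. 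Hence $(\pi_*S_X(\cM))^G$ is a coherent sheaf on $Y$, a direct summand of $\pi_*S_X(\cM)$, and $(\pi_*Q_X(\cM))^G$ is a shift by $\dim X$ of a coherent sheaf on $Y$, a direct summand of $\pi_*Q_X(\cM)$. So the right‑hand sides of the displayed formulas are honest objects; and once $S_Y(\cN)\cong(\pi_*S_X(\cM))^G$ is established, ``$\cN$ has RS'' follows at once from Lemma~\ref{lemma:SCM}, while the $Q$‑formula is proved in parallel and is consistent with the $S$‑formula by the duality of Remark~\ref{rmk:polar}.

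Next, restrict to the stable locus. Let $X^s\subseteq X$ be the nonempty open $G$‑invariant set of stable points; it is saturated, so $X^s=\pi^{-1}(Y^s)$ with $Y^s=X^s/G$, and $\pi|_{X^s}\colon X^s\to Y^s$ is a geometric quotient. Write $j\colon X^s\hookrightarrow X$ and $k\colon Y^s\hookrightarrow Y$ for the inclusions. By Corollary~\ref{cor:Ggen}, $j^*$ and $k^*$ are equivalences with inverses $j_{!*}$ and $k_{!*}$, and from the construction \eqref{eq:pidagger} one gets $k^*\circ\pi^G_+=(\pi|_{X^s})^G_+\circ j^*$. Also $\cM$ has RS if and only if $j^*\cM$ does, since $S$ commutes with restriction to a dense open set and the Cohen--Macaulay, pure‑dimensional property of $S_X(\cM)$ can be tested there. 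Applying Lemma~\ref{lemma:piG} to the geometric quotient $\pi|_{X^s}$, the module $k^*\cN=(\pi|_{X^s})^G_+(j^*\cM)$ has RS, and $S_{Y^s}(k^*\cN)\cong(\pi|_{X^s})^G_*S_{X^s}(j^*\cM)$, $Q_{Y^s}(k^*\cN)[d]\cong(\pi|_{X^s})^G_*Q_{X^s}(j^*\cM)$. Since $X^s=\pi^{-1}(Y^s)$ and $\pi$ is affine, these are the restrictions to $Y^s$ of $(\pi_*S_X(\cM))^G$ and $(\pi_*Q_X(\cM))^G$. Thus both displayed isomorphisms already hold over the dense open $Y^s\subseteq Y$, and $\cN$ has RS there.

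The substantive step is to propagate this across $Y\setminus Y^s$: one must see that $S_Y(\cN)$ is Cohen--Macaulay of pure dimension $\dim Y$ everywhere and globally equals $(\pi_*S_X(\cM))^G$ (and dually for $Q$). This is the module analogue of Boutot's theorem, and — as the examples of Stanley and Van den Bergh \cite{stanley,bergh} show — it is false for an arbitrary equivariant locally free sheaf in place of $S_X(\cM)$; the obstruction is that the fibres of $\pi$ may jump in dimension over the unstable locus, so that an $R^G$‑regular sequence need not remain $R$‑regular, and therefore Hodge‑theoretic positivity must be used, not just commutative algebra. I would mimic Boutot's proof ``with coefficients''. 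Compactify $\pi$ to a projective $G$‑equivariant $\overline\pi\colon\overline X\to Y$, extend $\cM$ to $\overline{\cM}\in HM_{\overline X}(\overline X,w+d)$, and choose a $G$‑equivariant resolution $\sigma\colon Z\to\overline X$ with $Z$ smooth and the polar locus of the extended variation of Hodge structure a normal crossing divisor, so that the Hodge module $\cM_Z\in HM_Z(Z,w+d)$ extending it has RS (Lemma~\ref{lemma:ncdrs}) and $Q_Z(\cM_Z)$ is a $G$‑equivariant locally free sheaf up to shift. By Corollary~\ref{cor:proj} and the decomposition theorem for $\sigma_+$, $\cN$ is a direct factor with strict support $Y$ of $\cH^{-d}g_+\cM_Z$, where $g=\overline\pi\circ\sigma\colon Z\to Y$ is projective and $G$‑equivariant. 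Saito's Koll\'ar‑type theorem of \S1 then reduces ``$\cN$ has RS'' — modulo identifying which factor of $\cH^{-d}g_+\cM_Z$ the module $\cN$ is, which one does using the behaviour over $Y^s$ already established — to the torsion‑freeness of the sheaves $R^ig_*Q_Z(\cM_Z)$. Here the $G$‑action enters decisively: each $R^ig_*Q_Z(\cM_Z)$ is $G$‑equivariant, its invariant part is a direct summand (Lemma~\ref{lemma:piGsplits}) which over $Y^s$ is, by the free‑quotient computations underlying Lemma~\ref{lemma:piG}, the $Q$ of a Hodge module, and the Reynolds operator transports this good behaviour — together with the locally free global model $(\pi_*Q_X(\cM))^G$ of the first paragraph — across all of $Y$, forcing torsion‑freeness. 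The same bookkeeping for $S$ yields the two displayed isomorphisms on $Y$.

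I expect the torsion‑freeness in the last paragraph — equivalently, the Cohen--Macaulayness of the invariant sheaf at the unstable points — to be the main obstacle. Everything preceding it is either formal or a direct appeal to Lemma~\ref{lemma:piG}, whereas matching the equivariant higher direct images $R^\bullet g_*Q_Z(\cM_Z)$ against the locally free invariant model over the whole of $Y$, and thereby turning Saito's Koll\'ar‑type criterion into a proof that $\cN$ has RS, is exactly where the Hochster--Roberts/Boutot phenomenon (a direct summand of an object with rational singularities, in the presence of a compatible Hodge structure, again has rational singularities) must be invoked in an essential, characteristic‑zero way.
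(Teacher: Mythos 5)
Your first two paragraphs are sound and agree with the paper's own reductions: the formal properties of $\pi_*$ and $(\cdot)^G$, the local freeness of $S_X(\cM)$ and $Q_X(\cM)$ on the smooth $X$, and the geometric-quotient case over $Y^s$ via Luna's slice theorem (Lemma \ref{lemma:piG}). But the step you yourself flag as ``the main obstacle'' --- propagating across $Y\setminus Y^s$ --- is exactly where the proof has to happen, and your third paragraph does not supply an argument for it. The Koll\'ar-type theorem of \S 1 requires torsion-freeness of the full higher direct images $R^i g_* Q_Z(\cM_Z)$ for all $i$, not just of their $G$-invariant parts, and over the unstable locus there is no reason for these to be torsion-free; the sentence ``the Reynolds operator transports this good behaviour \dots forcing torsion-freeness'' asserts the conclusion rather than proving it. Moreover, knowing the two displayed isomorphisms over the open set $Y^s$ gives no mechanism for extending them to $Y$, since $Y^s\hookrightarrow Y$ is an open immersion and not proper, so none of the pushforward theorems of \S 1 apply to it.

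The idea you are missing is Kirwan's partial desingularization \cite[Prop.\ 6.3]{kirwan}: replace $X$ by an equivariant blow-up $\tilde X$, chosen so that the lifted module $\tilde{\cM}$ has singularities along a normal crossing divisor (hence has RS by Lemma \ref{lemma:ncdrs}) and so that $\tilde X^{ss}=\tilde X^{s}$. Then $\tilde\pi\colon \tilde X^{ss}\to\tilde Y$ is a geometric quotient of the \emph{entire} semistable locus, and $q\colon\tilde Y\to Y$ is proper and birational. Lemma \ref{lemma:piG} applies on $\tilde Y$, and Saito's Theorem \ref{thm:saito3} (in the form of Proposition \ref{prop:dirS}) pushes $Q_{\tilde Y}(\tilde{\cN})$ and $S_{\tilde Y}(\tilde{\cN})$ forward along $q$ to $Q_Y(\cN)$ and $S_Y(\cN)$. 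Since these are the $G$-invariant direct summands of $\bR q_*\tilde\pi_* Q_{\tilde X}(\tilde{\cM})\simeq\pi_* Q_X(\cM)$ and of the analogous object for $S$, and since $\pi_* Q_X(\cM)$ is a sheaf concentrated in a single degree because $\cM$ has RS and $\pi$ is affine, one obtains both displayed isomorphisms and the fact that $\cN$ has RS. In short, the paper converts the geometric-quotient case into the general one by producing a proper birational model of $Y$ over which the quotient \emph{is} geometric; without some such device your outline does not close, and the route through torsion-freeness of the non-invariant $R^i g_* Q_Z(\cM_Z)$ is unlikely to be repairable as stated.
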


\begin{proof}
     By a theorem of Kirwan \cite[Prop 6.3]{kirwan}, we can find a nonsingular $G$-variety $\tilde X$ with  an ample $G$-equivariant line bundle $\tilde L$, and an equivariant map $p:\tilde X\to X$, such that $p(\tilde X(\tilde L)^{ss})=X$,  $\tilde X(\tilde L)^{ss}= \tilde X(\tilde L)^{s}$. We also have  that the induced map $q:\tilde Y= X^{ss}//G\to Y$ is  birational.  Furthermore,  the arguments of \cite[\S 3]{kirwan} show that $\tilde X$ can be chosen to dominate any given equivariant blow up of $X$. Therefore, the pull back of a given invariant closed set can be assumed to be a divisor with simple normal crossings. To simplify notation, let us replace $\tilde X$ by  $\tilde X^{ss}$. Then  we have a commutative diagram
$$
\xymatrix{
 \tilde X\ar[d]^{\tilde \pi}\ar[r]^{p} &X\ar[d]^{\pi}\\ 
  \tilde Y\ar[r]^{q} &Y.
}
 $$
 The map $p: \tX \rightarrow X$ is birational. So let $\tilde \cM \in HM_{\tX,G}(\tX,w+d)$ be the unique Hodge module that agrees with $\cM$ on some smooth open dense subset. We can assume that $\tilde \cM$ has singularities along a normal crossing divisor by blowing up $\tilde X$ further. So, by Lemma \ref{lemma:ncdrs},  $\tilde \cM$ has RS. Since the map $\tilde \pi: \tX \rightarrow \tY$ is a geometric quotient,  $\tilde \cN= \tilde \pi^G_+ \tilde \cM$ has RS by Lemma \ref{lemma:piG}. 
Since the maps $\pi$ and $\tilde \pi$ are affine, the functors $\pi_*$ and $\tilde \pi_*$ are exact on the category of coherent sheaves,
so we use the same symbols for their derived functors.
Since 
 $$Q_{\tY}(\tcN)[d] \cong \tilde \pi^{G}_{*}Q_{\tX}(\tcM)$$
 $$S_{\tY}(\tcN) \cong \tilde{\pi}^{G}_{*}S_{\tX}(\tcM)$$
 we deduce that
 $$\tilde \pi_{*}Q_{\tX}(\tcM) \cong Q_{\tY}(\tcN)[d] \oplus A[\dim X]$$
 $$ \tilde{\pi}_{*}S_{\tX}(\tcM) \cong S_{\tY}(\tcN) \oplus B$$
 for some $\cO_{\tY}$-modules $A$ and $B$ by Lemma \ref{lemma:piGsplits}.
  Therefore, we have
$$\pi_{*} Q_{X}(\cM) \simeq \pi_{*} \bR p_{*}Q_{\tX}(\tcM) \simeq \bR q_{*} \tilde \pi_{*}Q_{\tX}(\tcM)  $$

$$\simeq \bR q_{*}Q_{\tY}(\tcN)[d] \oplus \bR q_{*}A[\dim X] \simeq Q_{Y}(\cN)[d] \oplus \bR q_{*}A[\dim X]. $$
Similarly, 
$$\pi_{*}S_{X}(\cM) \simeq S_{Y}(\cN) \oplus \bR q_{*}B.$$
Therefore if $ \cM$ has RS, then we see that $\cN$ has RS and 
$$ Q_{Y}(\cN)[d] \cong \pi^{G}_{*}Q_{X}(\cM)$$
$$S_{Y}(\cN) \cong \pi^{G}_{*}S_{X}(\cM).$$
  \end{proof}

We recover a special case of Boutot's theorem (needed below) by a method   entirely different from the original proof.

\begin{cor}
Let $X,G$ and $Y$ be as in  Theorem \ref{thm:main}. The variety $Y$ has rational singularities. 

\end{cor}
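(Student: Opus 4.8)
The plan is to apply Theorem \ref{thm:main} to the trivial Hodge module and then invoke the characterization of rational singularities given in Corollary \ref{cor:rs}. First I would take $\cM = \Q_X^H[\dim X] \in HM_{X,G}(X, \dim X)$, which is naturally $G$-equivariant since the action on $X$ induces a canonical isomorphism $p_2^*\Q_X^H \cong \alpha^*\Q_X^H$ satisfying the cocycle condition. Because $X$ is smooth, we have $Q_X(\Q_X^H[\dim X]) = \cO_X[\dim X]$, which is a sheaf placed in the correct degree, so $\cM$ has RS (this is the example computed just after Proposition \ref{prop:ncd}, or directly from Lemma \ref{lemma:ncdrs} with empty divisor). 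Writing $n = \dim X$, $d = \dim G$, and $m = \dim Y = n - d$, the weight here is $w + d = n$, i.e. $w = m$.

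Next I would identify $\pi^G_+ \cM$. By Lemma \ref{lemma:piIC}, the equivalence $\pi^\dagger$ sends $IC_Y^H$ to $IC_X^H = \Q_X^H[n]$ (using smoothness of $X$), so its inverse $\pi^G_+$ sends $\cM = \Q_X^H[n]$ to $IC_Y^H \in HM_Y(Y, m)$. Now Theorem \ref{thm:main} applies verbatim: since $\cM$ has RS, the conclusion is that $\cN = \pi^G_+ \cM = IC_Y^H$ has rational singularities in the Hodge-module sense of Definition \ref{defn:rs}.

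Finally I would deduce that $Y$ has rational singularities in the classical sense. By Corollary \ref{cor:rs}, $Y$ has rational singularities if and only if $Y$ is normal and $IC_Y^H$ has RS. The second condition is exactly what we just established. For normality, one uses that $Y = \Spec R^G$ and $R^G$ is integrally closed in its fraction field: any element of the fraction field of $R^G$ integral over $R^G$ is integral over $R$, hence lies in $R$ (as $R = \OO_X(X)$ is normal, $X$ being smooth), and being $G$-invariant it lies in $R^G$. Thus $Y$ is normal, and the corollary follows.

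The only mild subtlety, rather than a genuine obstacle, is making sure the indices line up — that the trivial equivariant Hodge module sits in the category $HM_{X,G}(X, w+d)$ with the weight $w$ for which $\pi^G_+$ lands in $HM_Y(Y,w)$, and that Lemma \ref{lemma:piIC} is being read in the direction giving $\pi^G_+ \Q_X^H[n] = IC_Y^H$. Everything else is a direct citation: RS of $\cM$ from the smooth case, transport of RS through $\pi^G_+$ from Theorem \ref{thm:main}, and the normal-plus-$IC^H$-has-RS criterion from Corollary \ref{cor:rs}; the normality of a GIT quotient of a normal variety is standard.
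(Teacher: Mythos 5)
Your proof is correct and follows essentially the same route as the paper: apply Theorem \ref{thm:main} to $\Q_X^H[\dim X]$, identify $\pi^G_+\Q_X^H[\dim X]=IC_Y^H$ via Lemma \ref{lemma:piIC}, and conclude with Corollary \ref{cor:rs}. The only (immaterial) difference is the normality of $Y$: you give the direct argument that $R^G$ is integrally closed, while the paper notes that the normalization of $Y$ would again be a categorical quotient; both are standard and valid.
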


\begin{proof}
    We note first of all that $Y$ is normal because the normalization of $Y$ would also be a categorical quotient of $X$ by $G$.
    Since $X$ is smooth, $\Q^{H}_{X}[\dim X] \in HM_{X,G}(X, \dim X)$ has rational singularities.
    By Lemma   \ref{lemma:piIC}, $\pi_*^G\Q^{H}_{X}[\dim X]=IC^{H}_{Y} $, and this has rational singularities by
    Theorem \ref{thm:main}.
     Therefore $Y$ has rational singularities by Corollary \ref{cor:rs}.
   
\end{proof}

The following should be  standard, but we indicate the proof for lack of a suitable reference.

\begin{lemma}\label{lemma:CMsummand}
A direct summand of a Cohen-Macaulay module over a noetherian ring is again Cohen-Macaulay.
\end{lemma}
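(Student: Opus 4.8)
The plan is to reduce to the local case and then exploit the elementary behaviour of depth and Krull dimension under finite direct sums. Write $M = N \oplus P$ with $M$ Cohen-Macaulay; since the Cohen-Macaulay property is tested prime by prime on the support and localization commutes with finite direct sums, I may replace $R$ by a local ring $(R,\mathfrak{m})$ and assume $M$, and hence $N$ and $P$, are finitely generated, with $\depth M = \dim M$. If $N$ becomes zero after this localization it is trivially Cohen-Macaulay, so I assume $N \ne 0$.

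The two inputs I would use are: first, the Ext characterization $\depth L = \inf\{ i : \operatorname{Ext}^i_R(R/\mathfrak{m}, L) \ne 0 \}$ together with additivity of $\operatorname{Ext}$ in its second argument, which gives $\depth(N \oplus P) = \min(\depth N, \depth P)$; second, $\operatorname{Supp}(N \oplus P) = \operatorname{Supp} N \cup \operatorname{Supp} P$, whence $\dim(N \oplus P) = \max(\dim N, \dim P)$. I also use the standard inequality $\depth L \le \dim L$ for a finitely generated module over a noetherian local ring.

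After possibly interchanging $N$ and $P$, assume $\depth N \le \depth P$. Then $\depth M = \depth N$ and $\dim M = \max(\dim N,\dim P) \ge \dim N$. Combining these with $\depth M = \dim M$ and $\depth N \le \dim N$ gives
\[ \depth N \;=\; \depth M \;=\; \dim M \;\ge\; \dim N \;\ge\; \depth N, \]
so equality holds throughout; in particular $\depth N = \dim N$, i.e. $N$ is Cohen-Macaulay. (The same chain incidentally shows $P$ is Cohen-Macaulay and that $M$, $N$, $P$ all have the same dimension.) There is no genuine mathematical obstacle here; the only point requiring care is keeping the conventions straight, namely that ``Cohen-Macaulay module'' means $\depth M_\mathfrak{p} = \dim M_\mathfrak{p}$ at every prime of the support, that the zero module counts as Cohen-Macaulay so that the localized argument is not vacuous, and the convention $\depth 0 = +\infty$ that makes the minimum formula for depth hold.
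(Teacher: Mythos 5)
Your proof is correct and follows essentially the same route as the paper: reduce to the local case, use the $\operatorname{Ext}(k,-)$ characterization of depth together with additivity of $\operatorname{Ext}$ over the splitting to get $\depth N \ge \depth M$, and then sandwich via $\dim M \ge \dim N \ge \depth N \ge \depth M$. Your version packages the depth comparison as $\depth(N\oplus P)=\min(\depth N,\depth P)$ rather than the one-sided inequality, but the underlying argument is identical.
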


\begin{proof}
 It suffices to prove this when the ring is local.
 If $N\subset M$ is a summand, then we have inequalities
 $$\dim M\ge \dim N\ge \depth N\ge \depth M.$$
 Therefore if $M$ is CM, this forces $N$ to be CM as well.
 We only prove the last inequality, since the remaining inequalities are elementary.
 If $k$ denotes the residue field, then by \cite[Theorem 16.7]{matsumura}
 $$\depth N = \inf \{i\mid Ext^i(k,N)\not=0\}\ge  \inf\{i\mid Ext^i(k,M)\not=0\}= \depth M$$
 because $Ext^i(k,N)$ is a summand of $Ext^i(k,M)$.
\end{proof}

\begin{cor}
Let $X=\Spec R$ be a smooth variety admitting  an action by a reductive group $G$ such that $X^s\not=\emptyset$.
 Let $V$ be  a rational representation of $G$. Then $(V\otimes_\C R)^G$  is Cohen-Macaulay  provided  that $V\otimes R$ is a direct summand 
 of the global sections of   either
  $Q_{X}(\cM)[-\dim X]$ or $S_X(\cM)$ for
 some $\cM\in HM_{X,G}(X)$ with RS.
\end{cor}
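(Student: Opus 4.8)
The plan is to read this off Theorem \ref{thm:main}, together with the Cohen--Macaulay criterion of Lemma \ref{lemma:SCM} and the fact (Lemma \ref{lemma:CMsummand}) that a direct summand of a Cohen--Macaulay module is Cohen--Macaulay. Write $n=\dim X$, $m=\dim Y$, $d=n-m=\dim G$, and put $\cN=\pi^{G}_{+}\cM\in HM_{Y}(Y,w)$. Since $\cM$ is equivariant, $S_{X}(\cM)$ and $Q_{X}(\cM)$ carry canonical $G$-equivariant structures, so their global sections are $G$-equivariant $R$-modules, and ``direct summand'' in the statement is to be read in that ($G$-equivariant) sense; with that convention taking $G$-invariants preserves the summand relation. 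Because $\cM$ has RS, so does $\cN$ by Theorem \ref{thm:main}, and that theorem provides isomorphisms
$$Q_{Y}(\cN)[d]\cong(\pi_{*}Q_{X}(\cM))^{G},\qquad S_{Y}(\cN)\cong(\pi_{*}S_{X}(\cM))^{G}.$$
Moreover RS of $\cM$ makes $S_{X}(\cM)$ a coherent sheaf and $Q_{X}(\cM)\simeq P_{X}[n]$, where $P_{X}:=\cH^{-n}Q_{X}(\cM)$ is the sheaf ``$Q_{X}(\cM)[-\dim X]$'' of the statement, while RS of $\cN$ makes $S_{Y}(\cN)$ a coherent sheaf and $Q_{Y}(\cN)\simeq P_{Y}[m]$ with $P_{Y}:=\cH^{-m}Q_{Y}(\cN)$.

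First I would handle the $S$-case. Since $X=\Spec R$, $Y=\Spec R^{G}$ and $\pi$ are all affine, applying $\Gamma$ to $S_{Y}(\cN)\cong(\pi_{*}S_{X}(\cM))^{G}$ gives an isomorphism of $R^{G}$-modules $\Gamma(Y,S_{Y}(\cN))\cong\Gamma(X,S_{X}(\cM))^{G}$, the invariants being of the reductive representation $\Gamma(X,S_{X}(\cM))$. By Lemma \ref{lemma:SCM} the RS property of $\cN$ says precisely that $S_{Y}(\cN)$ is Cohen--Macaulay of pure dimension $m=\dim R^{G}$; hence $\Gamma(X,S_{X}(\cM))^{G}$ is a (maximal) Cohen--Macaulay $R^{G}$-module. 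If now $V\otimes_{\C}R$ is an equivariant direct summand of $\Gamma(X,S_{X}(\cM))$, then applying the exact functor $(-)^{G}$ (reductivity of $G$, cf. Lemma \ref{lemma:piGsplits}) exhibits $(V\otimes_{\C}R)^{G}$ as a direct summand of a Cohen--Macaulay module, hence Cohen--Macaulay by Lemma \ref{lemma:CMsummand}.

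For the $Q$-case I would first unwind the shifts. Since $\pi_{*}$ is exact, $(\pi_{*}Q_{X}(\cM))^{G}\cong(\pi_{*}P_{X})^{G}[n]$, while $Q_{Y}(\cN)[d]\cong P_{Y}[m+d]=P_{Y}[n]$; comparing with Theorem \ref{thm:main} yields $P_{Y}\cong(\pi_{*}P_{X})^{G}$, and hence on global sections $\Gamma(Y,P_{Y})\cong\Gamma(X,P_{X})^{G}$. It then suffices to know that $P_{Y}$ is Cohen--Macaulay of pure dimension $m$. For this, by Remark \ref{rmk:polar} we have $Q_{Y}(\cN)=\cD(S_{Y}(\cN))$, whence $\cD(P_{Y})\cong\cD\cD(S_{Y}(\cN))[m]\cong S_{Y}(\cN)[m]$, which is concentrated in degree $-m$ because $S_{Y}(\cN)$ is a sheaf; by the $\cE xt$-computation in the proof of Lemma \ref{lemma:SCM} this is exactly the statement that $P_{Y}$ is Cohen--Macaulay of pure dimension $m$. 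Thus $\Gamma(X,P_{X})^{G}$ is Cohen--Macaulay, and if $V\otimes_{\C}R$ is an equivariant summand of $\Gamma(X,P_{X})=\Gamma(X,Q_{X}(\cM)[-\dim X])$, then $(V\otimes_{\C}R)^{G}$ is a summand of it, hence Cohen--Macaulay by Lemma \ref{lemma:CMsummand}.

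The only real input is Theorem \ref{thm:main}; what deserves care is the shift and Tate-twist accounting in the $Q$-case, together with the accompanying point that the ``$Q$-side'' sheaf $P_{Y}$ on the (singular) quotient $Y$ is again Cohen--Macaulay — essentially the assertion that the canonical module of a Cohen--Macaulay module is Cohen--Macaulay — which here falls out formally from biduality of $\cD$ and Lemma \ref{lemma:SCM}. One should also be explicit that ``direct summand'' is meant equivariantly, which is the natural reading since $S_{X}(\cM)$ and $Q_{X}(\cM)$ inherit equivariant structures from $\cM$.
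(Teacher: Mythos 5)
Your proof is correct, and its overall skeleton is the same as the paper's: apply Theorem \ref{thm:main} to conclude that $\cN=\pi^{G}_{+}\cM$ has RS and to get the two isomorphisms, identify the $G$-invariants of the global sections with a Cohen--Macaulay module over $R^{G}$, and finish with Lemma \ref{lemma:CMsummand}; you are also right that ``direct summand'' must be read equivariantly so that $(-)^{G}$ preserves the splitting (the paper leaves this implicit). The one genuinely different step is how you establish Cohen--Macaulayness on the $Q$-side. The paper first invokes the preceding corollary (its version of Boutot) to conclude that $Y$ is Cohen--Macaulay, hence $\omega_{Y}^{\bullet}=\omega_{Y}[\dim Y]$, and then cites Herzog's Satz 6.1 to say that $\pi^{G}_{*}(Q_{X}(\cM)[-\dim X])=\mathcal{H}om_{Y}(S_{Y}(\cN),\omega_{Y})$ is Cohen--Macaulay, being the canonical dual of the maximal Cohen--Macaulay module $S_{Y}(\cN)$. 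You instead set $P_{Y}=\cH^{-m}Q_{Y}(\cN)$, use Remark \ref{rmk:polar} and biduality of $\cD$ to get $\cD(P_{Y})\cong S_{Y}(\cN)[m]$, and read off that $P_{Y}$ is Cohen--Macaulay of pure dimension $m$ from the $\cE xt$-vanishing criterion in the proof of Lemma \ref{lemma:SCM}. Your route is slightly more self-contained --- it needs neither Herzog's theorem nor the Cohen--Macaulayness of $Y$, only $\cD\cD\cong\mathrm{id}$ on $D^{b}_{coh}$ --- while the paper's is shorter on the page at the cost of those two external inputs; the underlying fact (the canonical dual of a maximal Cohen--Macaulay module is maximal Cohen--Macaulay) is of course the same.
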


\begin{proof}
Set $Y= \Spec R^G = X//G$.
By the previous corollary $Y$ has rational singularities, therefore  it is CM \cite[chap I \S3]{toroidal}. 
  Thus $\omega_Y^\bullet=\omega_Y[\dim Y]$, where 
    $\omega_Y $ is the dualizing sheaf.   Let  $\cM\in HM_{X,G}(X)$ have RS and let  $\cN = \pi^{G}_{+}\cM$.
    Theorem \ref{thm:main} implies that $\cN$ has RS. Therefore $S_Y(\cN)$ is maximal Cohen-Macaulay by Lemma \ref{lemma:SCM}.
    So by Theorem \ref{thm:main}  and \cite[Satz 6.1]{herzog}
    $$\pi_*^G(Q_X(\cM)[-\dim X ])= \mathcal{H}om_{Y}(S_Y(\cN),\omega_Y)$$
    is CM. A direct summand of $Q_{X}(\cM)[-\dim X]$ or $S_X(\cM)$ is also CM by Lemma \ref{lemma:CMsummand}.
\end{proof}

Following the referee's request, we give  a simple  example where this corollary applies. 

\begin{ex}\label{ex:CM}
Let $G\subseteq GL_n(\C)$ be a reductive subgroup acting in the standard way on  $R= \C[x_1,\ldots, x_n]$. We suppose that
$X^s\not=\emptyset$.
In the special case, where $G= \C^*$, with $t\in G$ acting  by $x_i \mapsto t^{a_i} x_i$, $X^s\not=\emptyset$ if and only if 
  there exists $ j,k$ with $a_j> 0$ and $a_k< 0$. 
 Let $V= \C v$, with  $A\in G$ acting by $v\mapsto \det(A) v$. Then $(V\otimes R)^G$
 is CM because we can identify $V\otimes R\cong  \Omega^n_R \cong \Gamma(Q_X(\Q_X^H[n])[-n])$.
\end{ex}

\printbibliography[
heading=bibintoc,
title={References}
] 

\end{document}